\def \P{P}
\def \Pb{P} 
\def \Qb{Q} 
\def \myf{\phi}
\def \myF{\Phi}
\def \ker{\operatorname{ker}}
\def \KS{D_{\operatorname{KS}}}
\def \MMD{D_{\operatorname{MMD}}}
\renewcommand{\myred}[1]{{#1}}
\newcommand{\LN}[1]{ {\color{red} LN: {#1}  }}
\def \n{{m}}
\def \m{{N}}
\def \Acalcal{\mathcal{X}}
\def\Xfrak{\mathfrak{X}}
\def \R{\mathbb{R}}
\def \thetave{\theta}
\def \myD{\ensuremath{D}}
\def \Ecal{\mathcal{E}}
\def \thetaeta{\theta}
\def \Dscr{\mathscr{D}}
\def \Dset{\mathbb{D}}
\def \Dcal{\mathcal{D}}
\def \Pcal{\mathcal{P}}
\def \Kcal{\mathcal{K}}
\def \Hcal{\mathcal{H}}
\def \Q{Q}
\def\1ve{\bm{1}}
\newcommand{\W}{\ensuremath{W}}
\newcommand{\Wr}{\ensuremath{W_r}}
\newcommand{\Wcalr}{\ensuremath{\W}_r}
\newcommand{\TPlan}{\ensuremath{\mathcal{T}}}
\def \dd{\textrm{d}}
\begin{document}

\begin{frontmatter}
\title{Optimal transport based theory for latent structured models}
\runtitle{Optimal transport based theory for latent structured models}

\begin{aug}
\author[A]{\fnms{XuanLong}~\snm{Nguyen}\ead[label=e1]{xuanlong@umich.edu}}
\and
\author[B]{\fnms{Yun}~\snm{Wei}\ead[label=e2]{yun.wei@utdallas.edu}}
\address[A]{University of Michigan\printead[presep={\ }]{e1}.}
\address[B]{University of Texas at Dallas\printead[presep={\ }]{e2}.}
\end{aug}

\maketitle

\begin{abstract}
This article is an exposition on some recent theoretical advances in learning latent structured models, with a primary focus on the fundamental roles that optimal transport distances play in the statistical theory. We aim at what may be the most critical and novel ingredient in this theory: the motivation, formulation, derivation and ramification of inverse bounds, a rich collection of structural inequalities for latent structure models which connect the space of distributions of unobserved structures of interest to the space of distributions for observed data. This theory is illustrated on classical mixture models, as well as the more modern hierarchical models that have been developed in Bayesian statistics, machine learning and related fields. 
\end{abstract}

\begin{keyword}
\kwd{Wasserstein metric}
\kwd{inverse bound}
\kwd{mixture model}
\kwd{hierarchical model}
\end{keyword}

\end{frontmatter}

\begin{comment}
\textbf{to do list}
\begin{enumerate}
    \item check whether notation is defined or not
    \item update everything about the words "proof" and "supplementary material"
    \item add proper references to the important theorems to our own papers
    \item fix the question mark at the end
    \item should we change the sentence something like "we do something" since it is a review paper and should use third person tone?
    \item 1/6: add something on improving the rate from $n^{-\frac{1}{4}}$ in overfitted case to $n^{-\frac{1}{2}}$??
\end{enumerate}
\end{comment}

\begin{comment}
\begin{enumerate}
\item 
Introduction: Page 1-3. LN
\item 
Minimum-distance: Page 4-9. YW
\item 
Strong and Weak identifiability: Page 10-15. YW
\item 
de Finetti's mixing measure: Page 16-21. LN and YW
\item 
Hierarchical models: 22-24. LN

\end{enumerate}
\end{comment}

\section{Introduction}
\label{sec:intro}

Estimating probability measures from observed data samples is one of the fundamental problems in statistics --- a probability measure of interest provides the law for a sample of the data population. The traditional approach in classical statistics is to assume that the probability measure admits a density function, parametrically or nonparametrically specified, with respect to a suitable dominating measure (e.g., Lebesgue measure on a Euclidean space). This leads to book length treatments for the topic of density estimation, e.g., \cite{Lehmann-Casella,vandeGeer-00,ghosal2017fundamentals}.
An equally fundamental, and arguably less developed, formulation for estimating probability measures is the setting where such measures describe patterns of some \emph{indirect} quantity of interest apart from the directly observed data population. 

Take, for instance, the problem of learning clustering patterns from a given data sample $\Dset=\{X_1,\ldots, X_n\}$ taking values in a space $\Xf$. Although one may simply apply a clustering algorithm directly on the data set (e.g., using something such as the $k$-means algorithm \cite{hartigan1985statistical}), to give inferential meaning to a data-driven procedure, one needs to endow a statistical model which provides the mathematical linkage from the observed data set $\Dset$ to the quantity of interest that represents the clustering structure. The most basic model assumes that $\Dset$ is an i.i.d. sample from a mixture of distributions, which admits a density function of the form
$X_1,\ldots, X_n \stackrel{i.i.d.}{\sim} \sum_{k=1}^{K} p_k f(\cdot|\theta_k)$. Here, $K \in \Nb \cup \{\infty\}$ represents the number of clusters, 
$p:=(p_1,\ldots, p_K) \in \Delta^{K-1}$ is a vector of mixing probabilities, and $f(x|\theta)$ is a pre-specified probability kernel on $\Xf \times\Theta$. The parameter $\theta_k \in \Theta$ represents a specific feature associated with the $k$-th mixture component, for $k=1,\ldots, K$. Thus, all that is unknown about the clustering pattern is represented by the (discrete) probability measure $G=\sum_{k=1}^{K}p_k \delta_{\theta_k}$. One can infer about the nature of data heterogeneity and the clustering composition through the number of atoms that provide the support for $G$, as well that the estimates of the atoms $\theta_k$ and associated mass $p_k$. The \emph{latent} $G$, also known as
mixing measure $G$, is the quantity of interest, even though one does not have direct samples of it; only sample(s) from the induced probability density 
\begin{equation}
\label{Eq:mixture}
p_G(x)  = \int f(x|\theta) G(\dd \theta) =\sum_{k} p_k f(x|\theta_k).
\end{equation}

Mixture models are the simplest instance in a range of probabilistic latent variable models and Bayesian hierarchical models, which shall be referred to as \emph{latent structured models} in this article. The range of such probabilistic modeling creatures is vast, but they share in common the presence of one or more latent mixing measures which represent interpretable quantities of inferential interest. Such latent structures increase the expressiveness of the model for the observed data, but they also carry domain-specific interpretation for the human practitioners. Latent structured models have played indispensable roles in many fields, from the analysis of stars and galaxies~\citep{mukherjee1998three} to that of genomes and phylogenies~\citep{pritchard2000inference,Felsenstein-81}, from the development of autonomous robots~\citep{thrun2005probabilistic} to information extraction engines over texts \citep{blei2003latent,tang2014understanding}, images~\citep{Sudderth-et-al08,ho2020denoisingdiffusionprobabilisticmodels} and network data \citep{HOLLAND1983109,amini2024hierarchical}. 
Because of the non-black box nature of such models, it is of interest to develop statistical theory about basic issues involving the identifiability and inference of latent mixing measures that arise in such latent structured models.

In this article we give an exposition on some recent developments on theoretical questions associated with the estimation of latent mixing measures arising in mixture models and Bayesian hierarchical models. 
Due to the versatility of mixture models, basic questions such as identifiability received early attention \citep{teicher1963identifiability,teicher1967identifiability}. Subsequent theoretical treatments concerning the estimation of mixing measures and related quantities have been studied by various authors (see, e.g., \cite{lindsay1995mixture,chen1995optimal,Dacunha-Castelle1997,ghosal2001entropies} and \cite{ho2020robust} for additional references). By contrast, comparable results for hierarchical models are rare, despite their proven effectiveness in applications across a vast range of data domains, as highlighted in the previous paragraph. The lack of progress may be due to both conceptual and technical challenges  --- on the one hand, a hierarchical model may be viewed itself as a mixture (of mixtures) model that anticipates the inference of multiple data populations; on the other hand, the presence of multiple layers of latent variables makes the resulting model quite complex, and they tend to stymie most straightforward attempts toward obtaining a point estimate of latent structures from data, from which one may be able to conduct an analysis of the asymptotic kind \citep{nguyen2015posterior,nguyen2016borrowing}.

There have been considerably substantial advances in the last 15 years in the statistical theory for latent structures, especially in the realm of mixture modeling.
In fact, for \emph{finite} mixtures of parametric kernels equipped with sufficiently strong identifiability conditions, optimal rates of estimation have been established in a pointwise \cite{nguyen2013convergence,ho2016strong,heinrich2018strong,wei2022convergence} or a minimax sense \citep{heinrich2018strong,wu2020optimal,wei2023minimum}. For general finite and infinite mixtures, new results were obtained for the first time, even if the full story remains incomplete \citep{nguyen2013convergence,ho2016convergence,ho2019singularity}. Promising progresses in the realm of hierarchical models have also been steadily made. Remarkably, most of these advances been been possible due partly to the recognition of the role that optimal transport plays in quantifying a suitable metric space of the mixing measures. 

The goal of this article is to describe such latest advances, with a focus on the natural and fundamental role of optimal transport in the statistical theories. Due to space constraint, we do not aim to be comprehensive, by covering only a selected number of latent structured models (e.g., models for graphs and networks will be excluded). We hope to be technically incisive while keeping this exposition accessible, by zooming in and shining the light on several key concepts and essential technical ingredients that allow the aforementioned advances possible. At the heart of these theoretical results lie the various notions of identifiability and the entailment of a collection of structural results known as \emph{inverse bounds}. These are comparison inequalities that provide upper bounds on the optimal transport distances on the quantities of interest $G$, in terms of a suitable distance on the corresponding data distributions $p_G$. 
These types of inverse bounds enable one to transfer known characterization of the quality of any data density estimate into that of the mixing measure of interest without concerning any particular method of parameter estimation.

\subsection{Optimal transport distance and mixture models}
As a starter, we focus on the mixture model setting introduced above. In order to characterize the convergence behavior for an estimate of the mixing measure $G$, one needs a suitable metric. For finite and infinite mixture models, $G$ is typically a discrete measure. Thus, standard notions of distance metrics or divergences often employed in the statistical theory of density estimation, such as total variation distance, Hellinger distance or the Kullback-Leibler (KL) distance are no longer suitable. Optimal transport based distances arise as a natural candidate. Suppose that $\Theta$ is a subset of an Euclidean space equipped with the norm $\|\cdot\|$. Let $G,G' \in \Pcal(\Theta)$, the space of Borel probability measures on $\Theta$. Recall, e.g., from \cite{Villani-08},
for $r\geq 1$, the $r$-order Wasserstein distance is given by 
\begin{equation}
\label{eq:optdist}
W_r(G,G'):= \inf_{\kappa \in \TPlan(G,G')} \biggr [\int \|\theta-\theta'\|^r \textrm{d} \kappa(\theta,\theta') \biggr ]^{1/r},
\end{equation}
where $\TPlan(G,G')$ denotes the space of couplings of $G$ and $G'$, i.e., joint distributions on $\Theta\times \Theta$ that induce marginal distributions $G$ and $G'$. In this article, most of the theoretical results concerning the estimation of mixing measure $G$ will be characterized by a suitable metric $W_r$. 

Although the adoption of a distance metric arising from the theory of optimal transport might initially appear to be merely out of technical convenience, it belies a fundamental shift from the classical problem of estimating the data population's distribution and that of the unobserved latent structure (such as the mixing measure $G$). It is important to recognize that the space $\Xf$ where the data reside and the space that we use to represent the quantity of interest are typically not the same. It can be said that the modern asymptotic theory of density estimation and hypothesis testing is the theory of divergence measures --- such divergence measures arise as the "right" notion of distance to characterize the quality of statistical inference from data \citep{LeCam-86,Cover91,vanderVaart-98,ghosal2017fundamentals}.  Many are instances of a broader class of divergences known as $f$-divergence \citep{Ali-Silvey-66,Csiszar-67}: let $\phi:[0,\infty] \rightarrow [0,\infty]$ be a proper convex function, then an $f$-divergence between two probability density functions $f_1$ and $f_2$ with respect to a dominating measure $\lambda$ on  $\Xf$ is given by $\rho_\phi(f_1,f_2)= \int \phi(f_2/f_1) f_1 \textrm{d} \mu$. Take $\phi(u)= (1/2)(\sqrt{u}-1)^2$; $\phi(u) = (1/2)|u-1|$ and
$\phi(u) = -\log(u)$ gives rise to the squared Hellinger distance (denoted $h^2$), the total variation distance ($V$) and the KL distance ($K$), respectively.

Since the space $\Theta$ provides a parameterization for the probability kernel $f(x|\theta)$ which composes the mixture model $p_G(x)$ via \eqref{Eq:mixture}, an optimal transport distance that inherits the metric structure of the mixture's kernels is
\begin{equation}
\label{eqn:composite}
W_\phi(G,G'):= \inf_{\pi \in \Kcal(G,G')} \int \rho_\phi(f(\cdot|\theta), f(\cdot|\theta')) \textrm{d} \pi(\theta,\theta'),
\end{equation}
where we have merely utilized the $f$-divergence $\rho_\phi$ as the unit cost of mass transportation for the parameter space $\Theta$. This is called a \emph{composite transportation distance}  \citep{nguyen2013convergence,nguyen2011wasserstein}. Moreover, one may "connect" the space of mixing measure $G$ to the corresponding space of data population's density function by the following \citep{nguyen2013convergence}:
\begin{lem} 
\label{Lem:struct} Provided that $W_\phi(G,G')$ and $\rho_\phi(p_G,p_{G'})$ are finite, then
$\rho_\phi(p_G,p_{G'}) \leq W_\phi(G,G')$.
\end{lem}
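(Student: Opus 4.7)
The plan is to exploit the joint convexity of $f$-divergences in their pair of arguments, together with the representation of $p_G$ and $p_{G'}$ as integrals of the kernel $f(\cdot\,|\,\theta)$ against the marginals of an arbitrary coupling $\pi \in \Kcal(G,G')$.

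First, I would fix any coupling $\pi \in \Kcal(G,G')$ and observe that the marginal constraints give, for every $x \in \Xf$,
\begin{equation*}
p_G(x) \;=\; \int f(x\,|\,\theta)\,G(\dd\theta) \;=\; \int f(x\,|\,\theta)\,\dd \pi(\theta,\theta'),\qquad
p_{G'}(x) \;=\; \int f(x\,|\,\theta')\,\dd \pi(\theta,\theta').
\end{equation*}
Thus both $p_G$ and $p_{G'}$ are \emph{mixtures} of the family $\{f(\cdot\,|\,\theta)\}_{\theta\in\Theta}$ and $\{f(\cdot\,|\,\theta')\}_{\theta'\in\Theta}$, respectively, with respect to the \emph{same} mixing measure $\pi$ on $\Theta\times\Theta$.

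Next, I would invoke the joint convexity of $\rho_\phi$: since $\phi$ is a proper convex function on $[0,\infty]$, the perspective function $(u,v)\mapsto v\,\phi(u/v)$ is jointly convex, and consequently for any probability measure $\pi$ on an index set and any two families of densities $(g_{\theta,\theta'}, h_{\theta,\theta'})$ one has
\begin{equation*}
\rho_\phi\!\left(\int g_{\theta,\theta'}\,\dd\pi,\ \int h_{\theta,\theta'}\,\dd\pi\right)
\;\le\; \int \rho_\phi\!\bigl(g_{\theta,\theta'},\,h_{\theta,\theta'}\bigr)\,\dd\pi(\theta,\theta').
\end{equation*}
Applied with $g_{\theta,\theta'}=f(\cdot\,|\,\theta)$ and $h_{\theta,\theta'}=f(\cdot\,|\,\theta')$, this yields
\begin{equation*}
\rho_\phi(p_G,p_{G'}) \;\le\; \int \rho_\phi\!\bigl(f(\cdot\,|\,\theta),\,f(\cdot\,|\,\theta')\bigr)\,\dd\pi(\theta,\theta').
\end{equation*}
Taking the infimum over $\pi\in\Kcal(G,G')$ on the right-hand side produces exactly $W_\phi(G,G')$ and concludes the argument.

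The only delicate point is the joint convexity inequality above. I would either cite it from a standard reference (it is the fundamental joint convexity property of $f$-divergences, equivalent to the data processing inequality), or prove it in one line by writing
\begin{equation*}
\rho_\phi\!\left(\int g\,\dd\pi,\int h\,\dd\pi\right)
= \int \phi\!\left(\frac{\int g\,\dd\pi}{\int h\,\dd\pi}\right)\!\left(\int h\,\dd\pi\right)\dd\mu
\end{equation*}
and applying Jensen's inequality with respect to the probability measure $h\,\dd\pi/\int h\,\dd\pi$ (pointwise in $x$) to the convex function $\phi$, followed by Fubini to swap integrals in $x$ and $(\theta,\theta')$. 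The finiteness assumption on $W_\phi(G,G')$ and $\rho_\phi(p_G,p_{G'})$ ensures no issues with $\infty-\infty$ when invoking Fubini, so this is the only genuine technical checkpoint; everything else is bookkeeping.
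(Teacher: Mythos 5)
Your proposal is correct and is essentially the same argument as the one behind the paper's statement (the paper itself states Lemma \ref{Lem:struct} without proof, citing the source): represent $p_G$ and $p_{G'}$ as mixtures of the kernels over a common coupling $\pi\in\Kcal(G,G')$, apply the joint convexity of the $f$-divergence via Jensen's inequality on the perspective function, and take the infimum over couplings. The only cosmetic point is to keep the argument order consistent with the paper's convention $\rho_\phi(f_1,f_2)=\int \phi(f_2/f_1)\,f_1\,\dd\mu$, which does not affect the validity of the convexity step.
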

The relationship between $f$-divergence $\rho_\phi(p_G,p_{G'})$ and the corresponding composite transportation distance between $G,G'$ may be simplified further given specific choices of the kernel $f$ and the function $\phi$. For example, if $\Theta = \Rb^d$, $f(x|\theta)$ is the multivariate normal density kernel $N(\theta,I_{d\times d})$, and $\phi(u)={1/2}(\sqrt{u}-1)^2$, then $\rho_\phi$ becomes squared Hellinger distance: $h^2(f(\cdot |\theta), f(\cdot|\theta')) = (1/8)\|\theta-\theta'\|^2$. As a result, one obtains for Gaussian location mixture models: $h^2(p_G,p_{G'}) \leq (1/8)W_2^2(G,G')$. Since $V\leq h$, we also have $V(p_G,p_{G'}) \leq (1/8)W_2^2(G,G')$. Take $\phi(u)=-\log(u)$ instead, one obtains an inequality for the KL distance:
$K(p_G,p_{G'}) \leq (1/2) W_2^2(G,G')$.

\subsection{Inverse bounds and identifiability conditions}
\label{sec:inverseintro}
Inverse bounds represent a kind of reverse of the structural inequalities given in Lemma \ref{Lem:struct}. An example of such inequalities is of the following form: for some $r\geq 1$, and some strictly increasing function $\Psi:[0,\infty)\rightarrow [0,\infty)$ such that $\Psi(0)=0$,
\begin{equation}
\label{Eq:inverse}
W_r(G,G') \leq \Psi(V(p_G,p_{G'})).
\end{equation}
This inequality, if true, provides finer information about the inverse of the integral mapping $G\mapsto p_G$. First, it implies that if $V(p_G,p_{G'}) = 0$, then $W_r(G,G')=0$. In other words, the mapping is injective, i.e., the unobserved mixing measure $G$ is uniquely identifiable from the data population's distribution $p_G$. Second, any characterization of the convergence behavior in the space of mixture densities $p_G$ may be transferred to that of the mixing measure $G$. This allows use to inherit from the tools and results in the existing literature concerning the convergence of data density estimation procedures to derive rates of convergence for the latent structures of interest. In fact, for both purposes, it is sufficient to establish ~\eqref{Eq:inverse} when $W_r(G,G')$ is sufficiently small. 
An equivalent form of such a claim is, where $\Psi^{-1}$ denotes the inverse of $\Psi$:
\begin{equation}
\label{Eq:inverse-local}
\liminf_{W_r(G,G') \rightarrow 0} \frac{V(p_G,p_{G'})}{\Psi^{-1}(W_r(G,G'))} >0.
\end{equation}

Unlike the generality of the inequalities given in Lemma \ref{Lem:struct}, inverse bounds of the form \eqref{Eq:inverse} may only hold subject to additional assumptions about the kernel $f$. They are also typically more challenging to obtain. Assumptions on the kernel $f$ are part what is known as \emph{identifiability conditions}. Strictly speaking, identifiability conditions are originally imposed so that the latent $G$ is uniquely identified from $p_G$. In this article, as is the customary in the recent literature, we regard identifiability conditions as a broad class of conditions concerning either $f$ or $G$ which entail the inverse bounds such as \eqref{Eq:inverse} or \eqref{Eq:inverse-local}. 

One of the more general inverse bounds were first obtained for convolution mixture models.
Specifically, suppose that $f$ is a translation-invariant probability kernel that is 
symmetric around 0 in $\Rb^d$, i.e., $\Xf = \Rb^d$ and $\Theta\subset \Rb^d$, 
$f(x|\theta) := f(x-\theta)$ such that $\int_{B} f(x) dx = \int_{-B} f(x)dx$ for any Borel set $B \subset \Rb^d$. For convolution mixtures, the procedure for recovering the mixing measure $G$ from the data density is known as deconvolution, although the early literature on deconvolution was typically restricted to assuming that $G$ also has a density function on $\Rb^d$ \citep{Fan-91,Zhang-90}. We do not require such a restriction here; more frequently for us $G$ is discrete.
In general, the relevant identifiability conditions for convolution mixtures concern the smoothness of the kernel function $f$, which can be specified in terms of the tail behavior of its Fourier transform.
Assume that the Fourier transform of $f$ satisfies $\tilde{f}(\omega) \neq 0$ for all $\omega \in \Rb^d$.
We say $f$ is \emph{ordinary smooth} with parameter $\beta >0$ if
$\int_{[-1/\delta,1/\delta]^d} \tilde{f}(\omega)^{-2} d\omega
\lesssim (1/\delta)^{2d\beta}$ as $\delta \rightarrow 0$.
Say $f$ is \emph{supersmooth} with parameter $\beta > 0$
if $\int_{[-1/\delta,1/\delta]^d} \tilde{f}(\omega)^{-2} d\omega
\lesssim \exp (2d\delta^{-\beta})$ as $\delta \rightarrow 0$.
The following inverse bound was obtained \cite{nguyen2013convergence}.
\begin{thm} 
\label{thm:convolution}
For a suitable function $\Psi$, the inverse bound \eqref{Eq:inverse}
holds for $r=2$ for any pair $G,G'\in \Pcal(\Theta)$, provided that
$\Theta$ is a bounded subset of $\Rb^d$, and $W_2(G,G')$
is sufficiently small. In particular,
if $f$ is ordinary smooth with parameter $\beta$ (e.g. $f$ is a Laplace kernel), then
$\Psi(u) = u^{1/(2+\beta d')}$ for any $d' > d$.
If $f$ is supersmooth then $\Psi(u) = (-\log u)^{-1/\beta}$ (e.g., for normal kernel, $\beta=2$).
\end{thm}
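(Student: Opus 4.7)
The plan is to use a Fourier-analytic argument that exploits the convolutional identity $\widetilde{p_G}(\omega)=\tilde f(\omega)\,\tilde G(\omega)$, so that recovering $G$ from $p_G$ is a deconvolution problem and the smoothness hypotheses on $f$ quantify how ill-conditioned that deconvolution is at high frequencies. The key ingredients are (i) a smoothing step that converts the Wasserstein-$2$ distance between possibly discrete $G,G'$ into a bound on an $L^{2}$-norm of smoothed densities, (ii) a Plancherel identity that reveals the deconvolution structure in frequency, and (iii) a frequency truncation whose bandwidth $\delta$ is optimized against the hypothesized tail behavior of $\tilde f$.

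Concretely, fix a smooth probability kernel $K$ on $\Rb^d$ with rapidly decaying, essentially compactly supported Fourier transform, and set $K_\delta(x)=\delta^{-d}K(x/\delta)$ for a bandwidth $\delta>0$ to be chosen. Since $K_\delta$ has mass concentrated on scale $\delta$, one has $W_2(G,G*K_\delta)\lesssim\delta$ (and similarly for $G'$), so the triangle inequality gives
$$W_2(G,G') \;\leq\; W_2(G*K_\delta,\,G'*K_\delta) \;+\; 2C_K\delta.$$
Because $\Theta$ is bounded of diameter $D$, the smoothed measures are effectively supported in a fixed enlargement of $\Theta$, so $W_2^{\,2}(G*K_\delta,G'*K_\delta) \lesssim D^{2}\,\|G*K_\delta-G'*K_\delta\|_{L^{1}}$, and Cauchy--Schwarz converts this to $L^{2}$. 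Plancherel then yields
$$\|G*K_\delta-G'*K_\delta\|_{L^{2}}^{2} \;=\; \int \bigl|\tilde G(\omega)-\tilde G'(\omega)\bigr|^{2}\,|\tilde K(\delta\omega)|^{2}\,d\omega,$$
and substituting $\tilde G-\tilde G'=(\widetilde{p_G}-\widetilde{p_{G'}})/\tilde f$ together with $|\widetilde{p_G-p_{G'}}(\omega)|\leq \|p_G-p_{G'}\|_{L^1}=2V(p_G,p_{G'})$ gives
$$\|G*K_\delta-G'*K_\delta\|_{L^{2}}^{2} \;\lesssim\; V(p_G,p_{G'})^{2}\int_{[-1/\delta,\,1/\delta]^{d}} |\tilde f(\omega)|^{-2}\,d\omega.$$

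Applying the smoothness hypothesis, the inner integral is $\lesssim \delta^{-2d\beta}$ (ordinary smooth) or $\lesssim \exp(2d\delta^{-\beta})$ (supersmooth). Assembling,
$$W_2(G,G') \;\lesssim\; \delta \;+\; V(p_G,p_{G'})^{1/2}\,M(\delta)^{1/2},$$
with $M(\delta)$ the corresponding smoothness factor. In the ordinary-smooth case the optimal $\delta\asymp V^{1/(2+d\beta)}$ yields $W_2\lesssim V^{1/(2+d\beta)}$, matching $\Psi(u)=u^{1/(2+\beta d')}$ up to the slack $d'>d$ introduced by the $L^{1}\!\to\!L^{2}$ conversion on the non-compact effective support of $G*K_\delta$; in the supersmooth case balancing $\delta\asymp (-\log V)^{-1/\beta}$ yields $W_2\lesssim (-\log V)^{-1/\beta}$, as claimed. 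The hypothesis that $W_2(G,G')$ is small ensures that the optimal $\delta$ lies in the regime where the smoothing bias and the deconvolution cost can actually be balanced.

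The main obstacle is the dual role of the bandwidth $\delta$: shrinking $\delta$ reduces the smoothing bias in the first term but amplifies the deconvolution factor $M(\delta)$ in the second. The sharp dichotomy between polynomial and logarithmic rates traces entirely to whether $|\tilde f(\omega)|^{-2}$ grows polynomially or exponentially in $|\omega|$, which is precisely what the ordinary-smooth versus supersmooth condition encodes. A secondary technical point is to justify the $W_2\to L^{2}$ reduction uniformly in $\delta$, which requires controlling the tails of $G*K_\delta$ outside a fixed compact enlargement of $\Theta$; this is the source of the $d'>d$ loss in the ordinary-smooth exponent and is handled by either truncating $K$ or a separate tail estimate.
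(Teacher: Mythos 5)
Your proposal is correct and follows essentially the same route as the proof in the cited source \cite{nguyen2013convergence}: mollify $G,G'$ by $K_\delta$, bound the smoothing bias by $\delta$, reduce the Wasserstein distance between the mollified measures to an $L^2$ norm via a weighted total-variation bound plus Cauchy--Schwarz on a slowly growing ball (the source of the $d'>d$ slack, since $K$ cannot be compactly supported when $\tilde K$ is), apply Plancherel with the deconvolution identity $\tilde G-\tilde G'=(\widetilde{p_G}-\widetilde{p_{G'}})/\tilde f$ and the bound $|\widetilde{p_G}-\widetilde{p_{G'}}|\leq 2V(p_G,p_{G'})$, and optimize $\delta$ against the tail condition on $\tilde f$. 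One small internal slip: since your chain gives $W_2^2(G*K_\delta,G'*K_\delta)\lesssim \|g_\delta-g'_\delta\|_{L^2}\lesssim V\,M(\delta)^{1/2}$, the assembled bound should read $W_2\lesssim \delta+V^{1/2}M(\delta)^{1/4}$ rather than $V^{1/2}M(\delta)^{1/2}$; your stated optimal choice $\delta\asymp V^{1/(2+d\beta)}$ and the final rates are consistent with this corrected exponent, not with the one displayed.
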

Additional inverse bounds were subsequently obtained for $W_r, r\geq 1$, for Laplace kernels, using a similar technique \citep{gao2016posterior}, and extended to a more general class of optimal transport distances known as Orlicz-Wasserstein distance, for the normal kernels \citep{guha2023excess}. 

From here, one obtains almost immediately the rates of convergence for a variety of estimation procedures of the mixing measure $G$, given an $n$-iid sample $\Dset = \{X_1,\ldots, X_n\}$, by drawing on a body of known results in density estimation. Indeed, assume that $\Dset$ is an i.i.d. sample from $p_{G_0}$, a mixture of normal kernels, for some "true" mixing measure $G^*=G_0 \in \Pcal(\Theta)$ (note here that the main assumption on $G_0$ is that its support is bounded, but $G_0$ may be either discrete or a non-atomic measure).  Consider either a Bayesian estimation or a maximum likelihood estimation procedure, such as ones studied in \cite{ghosal2001entropies}, it can be shown that the posterior distribution or the MLE contracts to the true data density $p_{G_0}$ at a rate $(\log n)^{\kappa}/n^{1/2}$ for some constant $\kappa$ under the Hellinger distance. In particular, let $\hat{G}_n$ denote a point estimate obtained by either procedure for the mixing measure $G$, then $h(p_{\hat{G}_n}, p_{G_0}) = O_p(\log n)^{\kappa}/n^{1/2}$. Now, applying Theorem \ref{thm:convolution} using $\Psi(u)=(-\log u)^{-1/2}$ for the normal kernel yields $W_2(\hat{G}_n,G_0) = O_p((\log n)^{-1/2})$, which is known as the optimal minimax rate for the deconvolution problem with normal kernel \citep{dedecker2013minimaxratesconvergencewasserstein,Fan-91}. For Laplace mixtures, likewise, the inverse bound under $W_1$ from \cite{gao2016posterior} may be invoked to obtain the minimax optimal rate of deconvolution for by a suitable MLE or Bayesian procedure.

We hope the above very brief sketch of theoretical results illustrates clearly how the inverse bounds effectively encapsulate the structural properties of a latent structured model $p_G$ without concerning any specific estimation procedure in mind, and one can easily obtain a characterization of the convergence behavior for $G$ as soon as the convergence analysis of the data population $p_G$ is achieved. Thus, it is our view that the derivation of inverse bounds plays a fundamental role in the theoretical investigation of latent structured models. However, as we will see later in this article, the inverse bounds can also be a powerful methodological tool which leads concrete estimation methods for the latent structure $G$.

\subsection{Paper organization}

For infinite mixture models and hierarchical models, other than the convolution models mentioned above, there has been very few results regarding identifiability and convergence behavior of the latent structure (an exception is the work \cite{nguyen2016borrowing} to the best of our knowledge). On the other hand, most substantial progress in the past decade has been achieved for the setting of \emph{finite} mixture and hierarchical models. This will occupy a relatively large portion in the remainder of this article. There is now a rich body of work on finite mixture models where one may obtain optimal rates of estimation, subject to sufficiently strong identifiability conditions. When such strong identifiability conditions are violated, we shall enter the more mysterious realm of singular finite mixture models, of which a complete story remains to be told. This will be the topics of Section 2 and Section 3. 
In Section 4, we turn to more complex forms of mixture and hierarchical models. The overarching theme is the development of inverse bounds for the latent structured models in question. We will discuss the conditions required for such bounds to hold, the consequences of such structural results on known estimation procedures, and where applicable, how they motivate new procedures with favorable theoretical guarantees.

\subsection{Notation}
We collect a few common notational choices to be used throughout the article. 
%
$\Xf$ denotes the space where observed data points reside, while $\Theta$ stands for the parameter space from which we define a quantity of interest, such as $G\in \Pcal(\Theta)$, where $\Pcal(\Theta)$ stands for the space of Borel probability measures on $\Theta$. Here, $\Theta$ is typically a subset of a Euclidean space such as $\Rb^q$. $\Gc_k(\Theta)$ denotes the set of all discrete probability measures with at most $k$ atoms in $\Theta$, while $\Ec_k(\Theta)$ contains only probability measures that has exactly $k$ atoms. On the other hand, $\Xf$ is equipped with the sigma algebra $\Xc$; depending on examples sometimes $\Xf = \Rb^d$, a non-Euclidean space or a more abstract space. 
For any probability measure $\P$ and $Q$ on measure space  $(\Xf,\Xc)$ with densities respectively $p$ and $q$  with respect to    some base measure $\lambda$, the variational distance between them is
	$V(\P,Q) = \sup_{A\in \Xc} |\P(A)-Q(A)| = \frac{1}{2} \int_{\Xf}  |p(x)-q(x)|d\lambda$. 
The Hellinger distance is given by
	 $ h(\P,\Q) = \left(\int_{\Xfrak} \frac{1}{2} |\sqrt{p(x)}-\sqrt{q(x)}|^2d\lambda\right)^{\frac{1}{2}}$.
	The Kullback-Leibler divergence of $Q$ from $P$ is $K(p,q)=\int_{\Xfrak}p(x)\log\frac{p(x)}{q(x)} d\lambda$.

%

The vector of all zeros is denoted as
$\bm{0}$ (in bold). Any vector $x\in \Rb^d$ is a column vector with its $i$-th coordinate denoted by $x^{(i)}$. 
%
Denote the set $[k]:=\{1,2,\ldots,k\}$. 
Denote the Dirac measure at $\theta$ as $\delta_{\theta}$.   
%
In the presentation of asymptotic inequality bounds, write $a \lesssim b$ if $a\leq C b$ for some constant $C$, where the dependence of $C$ on other quantities will be spelled out. $a\asymp b$ if $a\lesssim b$ and $b\lesssim a$.



\section{Inverse bounds for finite discrete measures}
\label{sec:finite}

The focus of this section is a general setting of finite mixture models, which specifies the law for observed sample $\Dset
\subset \Xf$ by the probability measure $P_G( \textrm{d}x) = \int_{\Theta} f(\textrm{d}x|\theta) G(\textrm{d}\theta)$. The object of inferential interest is the mixing measure $G$, a discrete probability measure with a finite number of support points. Here, $f$ denotes a known probability kernel on $\Xf\times \Theta$.
Assuming that $f(\textrm{d}x|\theta)$ is absolutely continuous with respect to a dominating measure $\lambda$ on $(\Xf,\Xc)$, with a slight abuse of notation, we use $f(x|\theta)$ to denote its density function, and $p_G$ the corresponding density function for $P_G$, as given in Eq.~\eqref{Eq:mixture}.

\subsection{Test functions}
\label{sec:test}
The question of identifiability of the latent $G(\textrm{d}\theta)$ from the observed mixture distribution $P_G(\textrm{d} x)$ may be viewed through the lens of test functions on $\Theta$. In particular, the functions that arise from the kernel $f$, namely, the class $\{ f(x\mid\cdot) \}_{x\in\Xf}$, as $x$ varies in $\Xf$,
represents a collection of test functions for the probability measures on $\Theta$. In general, given a class $\Phi$ of real measurable functions on $\Theta$, we are interested in identifying probability measures $G$, which may be viewed as linear functionals on $\Phi$: $\phi \mapsto G\phi$,  where $G\phi:= \int \phi(\theta) G(\textrm{d}\theta)$. Clearly, $G$ is identifiable in the topology of weak convergence by $\Phi$, if $\Phi$ is sufficiently rich, e.g., if $\Phi$ contains all continuous and bounded functions on $\Theta$. Our interest here is to exploit the specific constraint that $G$ is discrete measure that has a bounded number of support points. This motivates the following.

\begin{comment}
For a measurable function $\myf$ defined on $\Theta$, its integral w.r.t. a distribution $G=\sum_{i=1}^k p_i \delta_{\theta_i} \in \Ec_k(\Theta)$ is denoted by $G\myf:=\int \myf dG = \sum_{i=1}^k p_i \myf(\theta_i)$. The notation $G\myf$ is used to emphasize that $G$ can be viewed as an linear operator on measurable functions on $\Theta$. Consider a family $\myF$ of real-valued functions defined on $\Theta$. Given $\myF$,  $\|G-H\|_{\Phi}:=\sup_{\myf\in \myF} |G \myf-H \myf|$ measure the deviation between two mixing measures $G$ and $H$. A natural requirement for the test functions is the following property. 
\end{comment}

\begin{definition}
\label{def:distin}
Let $\Gc_k(\Theta)$ denote the set of discrete probability measures with at most $k$ atoms on $\Theta$.
	$\Gc_k(\Theta)$ is distinguishable by $\myF$ if for any $G \neq H\in \Gc_k(\Theta)$, 
    $\|G-H\|_{\Phi}:= \sup_{\phi \in \Phi} |G \phi - H \phi| >0$. 
\end{definition}
It is straightforward that if $\Gc_k(\Theta)$ is distinguishable by $\myF$, then  $\|G-H\|_{\Phi}$ is a proper metric on $\Gc_k(\Theta)$, which shall be referred to from here on as a $\Phi$-distance \citep{wei2023minimum}. Remarkably, it will be shown in the sequel that many inverse bounds obtained in the literature for finite mixture models may be expressed in the form of the inequality
\begin{equation}
\label{Eq:inversePhi}
    W_r^r(G,H) \lesssim \|G-H\|_{\Phi},
\end{equation}
for some $r\geq 1$ and for some suitable choice of function class $\Phi$. It is informative to first consider several examples of such function classes.

\begin{comment}
\begin{exa}[Moment deviations between mixing distributions] \label{exa:moment}
Suppose $q=1$ for simplicity.  Consider $\myF_2=\{(\theta-\theta_0)^j\}_{j\in [2d_1-1]}$ to be a finite collection of polynomials with $\theta_0$ a fixed constant. 
Then 
\begin{equation}
 \|G -H \|_{\myF_2} = \|\mbf_{2d_1-1}(G-\theta_0)-\mbf_{2d_1-1}(H-\theta_0)\|_{\infty}, \label{eqn:momentdistance}
\end{equation}
which is the maximum deviation of the first $2d_1-1$ moments of $G-\theta_0$ and $H-\theta_0$. Note that one may also include the index $j=0$ in the definition of $\myF_2$. Abusing the notation, we also denote:
\begin{align*}
& \mbf_{2d_1-1}(G-\theta_0,H-\theta_0) \\
:=&
\|\mbf_{2d_1-1}(G-\theta_0)-\mbf_{2d_1-1}(H-\theta_0)\|_{\infty}. 
\end{align*}
This example  can be naturally extended to general $q$-dimensional $\Theta$. 
\myeoe
\end{exa}
\end{comment}

\begin{exa} \label{exa:moment}
Firstly we  introduce some notation on multi-index and moments. 
The multi-index notation for $\alpha\in \Nb^q$
imposes the following 
$$
|\alpha|:=\sum_{i\in [q]} |\alpha^{(i)}|,  \quad \theta^\alpha :=  \prod_{i\in [q]} \left(\theta^{(i)}\right)^{\alpha^{(i)}},
$$
\myred{where $\theta\in \Rb^q$.} Denote $\Ic_k:=\{\alpha\in \Nb^q \mid  |\alpha|\leq k \}$. For a finite signed (discrete) measure $G=\sum_{i\in [k]} {p_i}\delta_{\theta_i}$ on $\Rb^q$, its $\alpha$-th moment is $m_\alpha(G)= \int \theta^{\alpha} \textrm{d}G(\theta) = \sum_{i\in [k]} {p_i}\theta_i^\alpha\in \Rb^q$. Denote by $\mbf_k(G) :=(m_\alpha(G))_{\alpha\in \Ic_k}\in \Rb^{|\Ic_k|}$ the vector of all $\alpha$-th moments of $G$ for $\alpha\in \Ic_k$. 

Suppose $\Theta = \Rb^q$, $\theta_0$ is a fixed constant vector and let $\Phi= \{(\theta-\theta_0)^{\alpha}\}_{|\alpha|\leq 2k-1}$, a finite collection of monomials. 
Then 
$$\|G-H\|_\Phi = \|\mbf_k(G-\theta_0)-\mbf_k(H-\theta_0) \|_\infty$$ 
 the infinity norm of a vector space of moments of degree up to $2k-1$. Here $G-\theta_0:=\sum_{i\in [k]} {p_i}\delta_{\theta_i-\theta_0}$ is the discrete distributions obtained by shifting the support points of $G$ by $-\theta_0$. 
\myeoe
\end{exa}

The next example is rather interesting, because the class of test functions is composed of the kernel function $f$ that defines the mixture model.

\begin{exa}
\label{exa:divationbetweenmixturedistributions}
Let $\myF=\left\{ \theta \mapsto \int h(x) f(\dd x|\theta) | h \in \Hc \right\}$ where $\Hc$ is a class of bounded and measurable functions on $(\Xf,\Xc)$. Note that each $h \in \Hc$ defines a function $\phi$ such that $\phi(\theta) = \int h(x)f(\dd x|\theta)$. By Fubini's theorem, 
\begin{equation}
\label{eqn:Gphi}
G\phi = \int\int h(x)f(\dd x|\theta) G(\dd \theta) = \int h(x) P_G(\dd x).
\end{equation}
It follows that
\begin{equation}
\|G-H\|_{\Phi} = \sup_{h \in \Hc} \left|\int h(x) (\Pb_G - \Pb_H) (\dd x)\right|, \label{eqn:integraldistance}
\end{equation}
which is also known as the integral probability metrics (IPM) \citep{muller1997integral, sriperumbudur2012empirical}, which are defined on the space of induced data population's distributions $\{P_G\}$.  

Let us specialize this example further. Taking $\Hc=\left\{x \mapsto 1_{B}(x)|B\in \Xc \right\}$, where $1_B(x)$ is the indicator function taking the value $1$ when $x\in B$ and $0$ otherwise, $\eqref{eqn:integraldistance}$ reduces to the total variation distance $V(\Pb_G,\Pb_H)$. Thus, an inverse bound of the form \eqref{Eq:inversePhi} reduces to the earlier form given by \eqref{Eq:inverse}.
For $(\Xf,\Xc)=(\Rb,\Bc(\Rb))$, the real line endowed with the Borel sigma algebra, and take $\Hc= \left\{x \mapsto 1_{(-\infty,a]}(x)|a\in \Rb \right\}$,  $\eqref{eqn:integraldistance}$ becomes the Kolmogorov-Smirnov (KS) distance $\KS(\Pb_G,\Pb_H)$, which is the maximum deviation of the cumulative distribution functions (CDF) of the mixture distributions.  By varying the choice of $\Hcal$, we may obtain a variety of metrics that drive the establishment of inverse bounds, both old and new, including the case where the domain $\Xf$ is non-Euclidean.
%
 \myeoe
\end{exa}

\subsection{Pointwise and uniform inverse bounds}

As illustrated in Section \ref{sec:inverseintro}, inverse bounds enable the derivation of rates of convergence for the unobserved quantity of interest $G$.  There are two types of inequalities to be discussed, which will be known as \emph{pointwise} and \emph{uniform} inverse bounds. They are useful in assessing the convergence of an estimate of the mixing measure $G$ in either a pointwise or a minimax sense. It must be noted that for latent structured models in general and finite mixture models in particular, the optimal minimax estimation rate is typically different (and considerably slower) than the optimal pointwise estimation rate for the mixing measure, due to the highly nonlinear nature of the desirable estimators involved. Thus, in theory a ``minimax optimal" procedure is not necessarily optimal in the sense of pointwise convergence, and vice versa. However, both types of optimality are meaningful, depending on the specific application domain. To fully assess the quality of a proposed estimation procedure for the mixture models and to speak of optimality one needs to consider both types of convergence, for which both pointwise and uniform inverse bounds will play a fundamental role. 

\begin{comment}
A standard way for characterizing the difficulty of an estimation problem is via minimax 
lower bounds for the quantity of interest. An estimation procedure is then evaluated against this metric of performance; the procedure is considered optimal in the \emph{minimax sense} if the corresponding minimax estimation upper bound guarantee matches the minimax lower bound under the same setting. It must be noted that for mixture models, the optimal minimax estimation rate is typically much slower than the optimal \emph{pointwise} estimation rate for the mixing measure. Thus, in theory a ``minimax optimal" procedure is not necessarily optimal in the sense of pointwise convergence, and vice versa. To fully assess the quality of a proposed estimation procedure, in this paper we will characterize the proposed estimation procedure using both types of convergence bounds.
\end{comment}

Considerable progress have been made in the past decade regarding optimal rates of convergence for mixing measure estimation
for the finite mixture setting. The primary setting is that of "overfitted" mixture models, where the true number of mixture components is unknown, only an upper bound $k$ is given. That is, $G\in \Gc_k(\Theta)$. The optimal minimax rate for estimating mixing measure \myred{for general mixture models} was first established by \cite{heinrich2018strong} in the univariate parameter setting, i.e., when $\Theta \subset \Rb$, and univariate data space $\Xf=\Rb$. Prior to \cite{heinrich2018strong}, there were also related minimax results \citep{Hardt2015-rg, Kalai2010-jn, chen2003tests} specifically for Gaussian mixture models, i.e., $f$ is a normal kernel. On the other hand, the optimal pointwise convergence rate of convergence for mixing measure in the overfitted mixture setting is the parametric $n^{-1/2}$ under quite general settings. 
Various estimation methods have been shown to achieve this rate of pointwise convergence (possibly up to a logarithm factor) \citep{heinrich2018strong,ho2020robust,guha2021posterior}. 

In this subsection we will introduce uniform and pointwise inverse bounds, which are essential ingredients underlying the aforementioned pointwise and minimax theory. Our intention is to clarify the roles of these structural results and their consequences. The establishment of such inverse bounds is deferred to Section \ref{sec:identifiability}.

\vspace{.1in}
\noindent \underline{Uniform inverse bounds}
All inverse bounds in this section are concerned with finite discrete measures, subject to constraints on the number of supporting atoms, and will be described in terms of the $\Phi$-distances introduced in Section \ref{sec:test}. Hence they may be viewed as a property of the ambient space $\Gc_k$, for a given $k \in \Nb$, and the test function class $\Phi$. 

The pair $(\Gc_k,\Phi)$ is said to possess the \emph{global inverse bound} if the following holds:
\begin{equation}
	\inf_{  G\neq H\in \Gc_k(\Theta) } \frac{\|G-H\|_{\Phi}}{W_{2k-1}^{2k-1}(G,H)} >0. \label{eqn:globalinversebound}
\end{equation}
Also useful is a local version of the uniform inverse bound, around a neighborhood of some $G_0 \in \Gc_k(\Theta)$. In particular suppose that $G_0$ has exactly $k_0 \leq k$ supporting atoms, we say the triplet $(\Gc_k,\Phi,G_0)$ possesses the \emph{local inverse bound} if the following holds:
\begin{equation}
\liminf_{ \substack{G,H\overset{W_1}{\to} G_0\\ G\neq H\in \Gc_k(\Theta) }} \frac{\|G-H\|_{\Phi}}{W_{2d_1-1}^{2d_1-1}(G,H)} >0. \label{eqn:localinversebound}
\end{equation}
In the above inequality $d_1$ is a function of $G_0$: each $G_0$ has a unique number of atoms $k_0$, and thus has a unique overfit index $d_1=k-k_0+1$. It is worth noting that the two notions of uniform inverse bounds are closely related via a simple compactness argument \citep{wei2023minimum}:
\begin{lem} \label{lem:localtoglobal}
Suppose that $\Theta$ is compact, and $\Gc_k(\Theta)$ is distinguishable by $\myF$.
	If the local inverse bound \eqref{eqn:localinversebound} holds for any $G_0\in \Gc_k(\Theta)$, then so does the global version \eqref{eqn:globalinversebound}.
\end{lem}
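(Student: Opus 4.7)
The plan is to argue by contradiction via sequential compactness. Assume \eqref{eqn:globalinversebound} fails, so there exist sequences $G_n \neq H_n$ in $\Gc_k(\Theta)$ with $\|G_n-H_n\|_\Phi / W_{2k-1}^{2k-1}(G_n,H_n)\to 0$. Since $\Theta$ is compact, $\Pcal(\Theta)$ is $W_1$-compact, and $\Gc_k(\Theta)$ is $W_1$-closed in $\Pcal(\Theta)$: a weak limit of measures supported on at most $k$ points is again of this form, as follows by extracting convergent subsequences of the weights and of the atoms. Hence I may pass to a subsequence along which $G_n \to G^*$ and $H_n \to H^*$ in $W_1$ for some $G^*, H^* \in \Gc_k(\Theta)$. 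The argument then splits on whether the two limits coincide.

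\emph{First, if $G^* \neq H^*$.} The denominator is bounded above by $(\operatorname{diam}\Theta)^{2k-1} < \infty$, so the vanishing ratio forces $\|G_n-H_n\|_\Phi \to 0$. On the other hand, the test functions $\phi \in \Phi$ of interest (cf.\ Section 2.1 and Example 2.2) are bounded and continuous, so $W_1$-convergence yields $G_n\phi \to G^*\phi$ and $H_n\phi \to H^*\phi$ for each fixed $\phi$. A standard sup/liminf interchange then gives
\[
\liminf_{n} \|G_n - H_n\|_\Phi \;=\; \liminf_n \sup_{\phi\in\Phi}|G_n\phi-H_n\phi| \;\geq\; \sup_{\phi\in\Phi} \bigl|G^*\phi - H^*\phi\bigr| \;=\; \|G^*-H^*\|_\Phi \;>\; 0,
\]
the strict positivity coming from distinguishability of $\Gc_k(\Theta)$ by $\Phi$. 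This contradicts $\|G_n-H_n\|_\Phi \to 0$.

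\emph{Second, if $G^* = H^*$.} Let $k_0^* \leq k$ be the number of atoms of $G^*$ and set $d_1^* := k - k_0^* + 1$, so that $2d_1^* - 1 \leq 2k-1$. Since both sequences tend to $G^*$ in $W_1$, the local inverse bound \eqref{eqn:localinversebound} at $G_0 = G^*$ applies to the pair $(G_n,H_n)$ for all sufficiently large $n$, giving $\|G_n-H_n\|_\Phi \geq \tfrac{1}{2}\, c_{G^*}\, W_{2d_1^*-1}^{2d_1^*-1}(G_n,H_n)$ for some constant $c_{G^*}>0$. Since $\Theta$ has finite diameter $D$, the elementary pointwise estimate $\|x-y\|^{2k-1} \leq D^{2(k-d_1^*)}\|x-y\|^{2d_1^*-1}$, integrated against any transport plan, upgrades to
\[
W_{2k-1}^{2k-1}(G_n,H_n) \;\leq\; D^{2(k-d_1^*)}\, W_{2d_1^*-1}^{2d_1^*-1}(G_n,H_n).
\]
Combining yields a strictly positive lower bound on $\|G_n-H_n\|_\Phi / W_{2k-1}^{2k-1}(G_n,H_n)$ for large $n$, again contradicting the assumption.

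The step I expect to require the most care is the liminf inequality in the first case: the $\Phi$-distance is a supremum over a (possibly infinite) function class, and under weak convergence of the underlying measures it is in general only lower semicontinuous, so the argument genuinely relies on mild regularity of the test functions---boundedness and continuity---valid in all concrete examples considered in the article. The second case is largely bookkeeping once one notices that the exponent mismatch between $2k-1$ and $2d_1^*-1$ is absorbed into a constant diameter factor.
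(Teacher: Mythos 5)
Your proof is correct and is essentially the "simple compactness argument" the paper attributes to \cite{wei2023minimum}: extract $W_1$-convergent subsequences in the compact space $\Gc_k(\Theta)$, invoke distinguishability when the two limits differ, and invoke the local inverse bound together with $W_{2k-1}^{2k-1}(G,H)\lesssim W_{2d_1-1}^{2d_1-1}(G,H)$ (valid on bounded $\Theta$) when they coincide. Your caveat about the first case is well taken --- the lower-semicontinuity step, and indeed the lemma itself, can fail for merely measurable test functions --- but in the paper's framework the local inverse bounds are only ever established for test classes of continuously differentiable functions, so the continuity you rely on is implicitly available.
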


Clearly if the global (or local) inverse bound is possessed by $(\Gc_k, \Phi)$ (or by $(\Gc_k, \Phi, G_0)$, resp.), then it also holds if $\Phi$ is replaced by its arbitrary superset. Thus it is of interest to establish such bounds for minimal classses of test functions $\Phi$.  Such uniform inverse bounds have been studied in \cite{heinrich2018strong, wu2020optimal, doss2020optimal} for specific  $\myF$ and \cite{wei2023minimum} for general $\myF$. 

\begin{comment}
The following lemma states some equivalent formulations of inverse bounds. 

\begin{lem}[Equivalent versions of inverse bounds]The following hold. 
\label{lem:equinvbou}
 \begin{enumerate}[label=(\alph*)]
\item \label{lem:equinvboua}
\eqref{eqn:globalinversebound} is equivalent to
\begin{align*}
	 W_{2k-1}^{2k-1}(G,H) \leq C' \|G-H\|_{\Phi}, \quad \forall G, H\in \Gc_k(\Theta)
\end{align*}
for some constant $C'$ (that possibly depends on the model). 

\item  \label{lem:equinvboub}
 Fix $G_0\in \Ec_{k_0}(\Theta)$. \eqref{eqn:localinversebound} is equivalent to the following:
 there exist $r(G_0)$ and  $C(G_0)$, where their dependence on $\myF, \Theta, k_0,k$ are suppressed, such that for any $G,H\in \Gc_k(\Theta)$ satisfying $W_1(G_0,G)<r(G_0)$ and $W_1(G_0,H)<r(G_0)$, 
\begin{align*}
 W_{2d_1-1}^{2d_1-1}(G,H)  
\leq &  C(G_0)   \|G-H\|_{\Phi}.
\end{align*}

\item \label{lem:equinvbouc}
Suppose that $\Theta$ is compact and that $\Gc_k(\Theta)$ is distinguishable by $\myF$.  Fix $G_0\in \Ec_{k_0}(\Theta)$.  \eqref{eqn:localinversebound} is equivalent to the following: there exist $r(G_0)$ and  $C(G_0)$, where their dependence on $\myF, \Theta, k_0,k$ are suppressed, such that for any $ G, H\in \Gc_k(\Theta)$ satisfying $W_1(G_0,H)<r(G_0)$, 
\begin{align*}
 W_{2d_1-1}^{2d_1-1}(G,H)
\leq &  C(G_0)     \|G-H\|_{\Phi}.
\end{align*}
\end{enumerate}
\end{lem}
\end{comment}

\vspace{.1in}
\noindent\underline{Pointwise inverse bounds} 
\label{sec:invboufix}
If one only intends to establish a pointwise convergence rate for a particular true mixing measure, it   suffices to obtain an inverse bound with one argument fixed. As before, suppose that $G_0$ has exactly $k_0$ atoms, the triplet $(\Gc_k, \Phi, G_0)$ is said to possess the \emph{pointwise overfitted inverse bound} if there holds
\begin{equation}
\liminf_{ \substack{G\overset{W_1}{\to} G_0\\ G\in \Gc_k(\Theta) }} \frac{\|G-G_0 \|_{\myF}}{W_{2}^{2}(G,G_0)} >0. \label{eqn:localinverseboundfixargument}
\end{equation}
An useful refinement of the above inequality can be had, if we restrict the ambient space to $\Gc_{k_0} \subset \Gc_{k}$. In particular, the triplet $(\Gc_{k_0},\Phi, G_0)$ is said to possess the \emph{pointwise exact-fitted inverse bound} if
\begin{equation}
\liminf_{ \substack{G\overset{W_1}{\to} G_0\\ G\in \Gc_{k_0}(\Theta) }} \frac{\|G-G_0 \|_{\myF}}{W_{1}(G,G_0)} >0. \label{eqn:localinverseboundfixargumentexactfitted}
\end{equation}

Pointwise overfitted inverse bound and various inequalities in similar forms have been obtained in \cite{chen1995optimal,nguyen2013convergence,ho2016strong,wei2023minimum}. 
The pointwise exact-fitted inverse bounds were studied in \cite{ho2016strong,wei2022convergence, wei2023minimum}.  
They play crucial roles in deriving pointwise rates of convergence for various estimators of mixing measures that arise in overfitted and exact-fitted finite mixture models.

\begin{comment}
Pointwise inverse bounds may be used to derive pointwise convergence rates. 
A general pointwise convergence rate in the spirit of Lemma \ref{lem:convergencerate} for arbitrary estimators maybe formulated but is omitted due to the space constraint. We refer to \cite{chen1995optimal,nguyen2013convergence,ho2016strong,wei2023minimum,wei2022convergence} for various specific function classes $\myF$ and 
\cite[Section 4]{wei2023minimum} for general $\myF$. 
\end{comment}

\begin{rem} \label{rem:strweakiden}
As noted earlier, all inverse bounds \eqref{eqn:globalinversebound}, \eqref{eqn:localinversebound}, \eqref{eqn:localinverseboundfixargument} and \eqref{eqn:localinverseboundfixargumentexactfitted} may be written in the form of Eq. \eqref{Eq:inversePhi}, which expresses an upper bound of $W_r^{r}(G,H)$ for some $r\geq 1$ in terms of the $\Phi$-distance $\|G-H\|_{\Phi}$, provided that $W_r(G,H)$ is sufficiently small.
An inquisitive reader will likely question the seeming arbitrariness in the order $r$ in these inequalities. First of all, note that when $\Theta$ is bounded, it is immediate from the definition \eqref{eq:optdist} that $W_r^r(G,H) \lesssim W_s^s(G,H)$ if $r>s \geq 1$. Secondly, it is the technical subject of Section \ref{sec:identifiability}, where we shall elaborate general sufficient conditions on the test function class $\Phi$ under which the order $r$ introduced herein for the relevant inverse bounds are, in fact, the best possible. Such sufficient conditions shall be referred to as \emph{strong identifiability} conditions. However, when such conditions on $\Phi$ are violated, e.g., when we are in the settings of singular mixture models, then a generic choice for the order $r$ as stated may not longer be valid. Remarkably, useful pointwise inverse bounds may still be obtained with other appropriate choices of order $r$. Such results will be found under the banner of \emph{weak identifiability} theory and discussed in Sections \ref{sec:weakidentifiability} and \ref{sec:singular}. \myeoe
\end{rem}
\begin{rem}[Inverse bounds via moment differences]
Despite the versatility of $\Phi$-distances, it is worth mentioning that related  and alternative characterizations of the inverse bounds are available. Suppose that the function class $\Phi$ contains only continuously differentiable functions up to $2k-1$ order satisfying a suitable regularity condition, then it can be shown \cite[Section 2.5]{wei2023minimum} that
\[\|G-H\|_{\myF} \lesssim \|\mbf_{2k-1}\left(G-\theta_0\right)-\mbf_{2k-1}\left(H-\theta_0\right) \|_\infty,\] 
where  the right side represents the infinity norm of the vector of moment differences, for all moments up to degree $2k-1$
(i.e., a specific $\Phi$-distance with monomial test functions, cf. Example \ref{exa:moment}).
Moreover, when $\Phi$ satisfies the strong identifiability conditions that make \eqref{eqn:globalinversebound} valid, then the above asymptotical inequality can be established in the reverse order, so we arrive at the following inverse bound in terms of the moment differences:
\begin{multline}
W_{2k-1}^{2k-1}(G,H)  \lesssim \|G-H\|_{\myF} \\
\asymp  \|\mbf_{2k-1}\left(G-\theta_0\right)-\mbf_{2k-1}\left(H-\theta_0\right) \|_\infty.
\label{eq:moments}
 \end{multline}
Thus we recover an inverse bound of \cite{doss2020optimal} (who studied Gaussian mixtures), but in a general mixture setting.
\myeoe
\end{rem}

\begin{rem}[Inverse bounds via integral $\Phi$-distances]
All inverse bounds \eqref{eqn:globalinversebound}, \eqref{eqn:localinversebound}, \eqref{eqn:localinverseboundfixargument}, \eqref{eqn:localinverseboundfixargumentexactfitted}, or in its generic form of Eq.\eqref{Eq:inversePhi} state the upper bound of optimal transport distances $W_r(G,H)$ in terms of the $\Phi$-distance, which corresponds to the infinity norm of the class of linear functionals $G: \phi \mapsto G\phi$. Alternatively, suppose that there is a measure $\mathscr{T}$ on $\myF$, and one may also be interested in inverse bounds in terms of an "integral $\Phi$-distance":
\begin{equation}
\label{eq:inverseintegral}
W_r^r(G,H) \lesssim \int_{\myF} |G \myf-H \myf| d\mathscr{T}. 
\end{equation}
Take a class of functions that arise from the kernel $f$ of the mixture model, namely, $\Phi = \{f(x|\cdot)\}_{x \in \Xf}$, and $\mathscr{T}$ the measure induced from the dominating measure $\lambda$ on $(\Xf, \Xc)$, then clearly
\[\int_{\myF} |G \myf-H \myf| d\mathscr{T} = \int_{\Xf} |p_G(x) - p_H(x)| d\lambda = 2 V(P_G, P_H).\]
Thus, the inverse bound \eqref{eq:inverseintegral} reduces to the form of \eqref{Eq:inverse}. 
In turns out that much of the machinery that goes into the establishment of the inverse bounds via $\Phi$-distances can be extended to that of integral distances. See \cite[Section~5.1]{wei2023minimum} for further details. 
\myeoe
\end{rem}

\begin{comment}
\begin{rem} \footnote{require careful check}We present a summary remark to conclude this subsection. 
\begin{itemize}
  \item The richer $\Phi$ is, the more ``likely'' the inverse bounds hold.
  
  \item As seen from Lemma \ref{lem:convergencerate}, the bounds are \emph{estimation--procedure--free}, i.e., they can be used to derive convergence rates 
  based on the convergence behavior of $\|G-\hat{G}_n\|_{\myF}$
  for a given estimation procedure (Bayes, MLE, etc.).

  \item They suggest a large class of estimation methods as well.

  \item This theory and related methods can be found in detail in \cite{wei2023minimum}.
\end{itemize}
\myeoe
\end{rem}
\end{comment}

\noindent\underline{Consequences of inverse bounds}
Having introduced the relevant inverse bounds, we shall now make explicit their usefulness in deriving convergence behavior for an estimate of the mixing measure. The following is a simple but rather meaningful lemma which spells out precisely the role of uniform inverse bounds \eqref{eqn:globalinversebound} and \eqref{eqn:localinversebound}. We skip a similar statement for pointwise inverse bounds \eqref{eqn:localinverseboundfixargument} and \eqref{eqn:localinverseboundfixargumentexactfitted}.  
Recall that $\Pc(\Theta)$ is the space of all Borel probability measure on $\Theta$. Denote by $\Ef_n$ the set of all estimators (measurable random elements) taking values in $\Pc(\Theta)$ 
based on an $n$-i.i.d. sample $X_1,\ldots,X_n$ from the mixture distribution $\Pb_{G^*}$.

\begin{lem}
\label{lem:convergencerate}
Suppose that $\Theta$ is compact. Let $\hat{G}_n\in \Ef_n$ be any estimator.
 \begin{enumerate}[label=(\alph*)]
\item \label{itema:lem:convergencerate}
Suppose that $(\Gc_k,\Phi)$ possesses the global inverse bound \eqref{eqn:globalinversebound}. 
Then for any $G\in \Gc_k(\Theta)$, any $t>0$, there is a constant $C(\Phi,\Theta,k)>0$ such that
\begin{align*}
\numberthis \label{eqn:globalconvergencerategenerallem}
	& \left\{W_{2k-1}^{2k-1}(G,\hat{G}_n)\geq t \right\} \cap \{ \hat{G}_n \in \Gc_k(\Theta) \} \\
	\subset &\left\{ \|G -\hat{G}_n \|_{\myF}\geq C(\myF,\Theta,k) t \right\} \cap \{ \hat{G}_n \in \Gc_k(\Theta) \}. 
\end{align*}
\item \label{itemb:lem:convergencerate}
Fix $G_0\in \Gc_k(\Theta)$ that has exactly $k_0\leq k$ atoms. Suppose that $(\Gc_k, \Phi, G_0)$ possesses the local inverse bound \eqref{eqn:localinversebound} and $\Gc_k(\Theta)$ is distinguishable by $\myF$. Then there exist positive constants $r(G_0)$ and  $C(G_0)$, whose dependence on $\myF, \Theta, k_0,k$ are suppressed, such that for any $G\in \Gc_k(\Theta)$ satisfying $W_1(G_0,G)<r(G_0)$, any $t >0$,
\begin{align*}
\numberthis \label{eqn:convergencerategenerallem} 
&\left\{ W_{2d_1-1}^{2d_1-1}(G,\hat{G}_n)\geq t \right\} \cap \{ \hat{G}_n \in \Gc_k(\Theta) \} \\
\subset &    \left\{ \|G -\hat{G}_n \|_{\myF}\geq C(G_0)  t  \right\} \cap \{ \hat{G}_n \in \Gc_k(\Theta) \} . 
\end{align*}
\end{enumerate}
\end{lem}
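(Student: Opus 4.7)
Part \ref{itema:lem:convergencerate} is essentially a restatement of the global inverse bound. The infimum in \eqref{eqn:globalinversebound} is strictly positive by hypothesis, call it $C := C(\Phi,\Theta,k) > 0$, yielding the deterministic inequality $\|G - H\|_\Phi \geq C\, W_{2k-1}^{2k-1}(G, H)$ for every pair $G, H \in \Gc_k(\Theta)$ (trivially so when $G = H$). Specializing $H = \hat{G}_n$ and restricting to the event $\{\hat{G}_n \in \Gc_k(\Theta)\}$ converts the event on the left of \eqref{eqn:globalconvergencerategenerallem} into the event on the right, which is all that is claimed.

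For part \ref{itemb:lem:convergencerate}, the local inverse bound \eqref{eqn:localinversebound} immediately supplies positive constants $r'$ and $C'$ such that
\[
\|G - H\|_\Phi \geq C'\, W_{2d_1-1}^{2d_1-1}(G, H)
\]
for all $G, H \in \Gc_k(\Theta)$ with $W_1(G_0, G), W_1(G_0, H) \leq r'$. I would set $r(G_0) := r'/2$ and assume $W_1(G_0, G) < r(G_0)$. The argument then splits into two cases depending on whether $\hat{G}_n$ lies in the $r'$-ball around $G_0$. If $W_1(G_0, \hat{G}_n) \leq r'$, the local bound delivers the desired inclusion with constant $C'$; the real work is the escape case $W_1(G_0, \hat{G}_n) > r'$, where the local inverse bound has no bite.

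For that case I would introduce the set $K := \{(G', H') \in \Gc_k(\Theta)^2 : W_1(G_0, G') \leq r(G_0),\, W_1(G_0, H') \geq r'\}$, which is weakly compact because $\Gc_k(\Theta)$ is weakly compact (from compactness of $\Theta$) and the $W_1$ constraints cut out a weakly closed subset of the product. On $K$ one has $W_1(G', H') \geq r'/2 > 0$, so $G' \neq H'$ pointwise, and hence $\|G' - H'\|_\Phi > 0$ by distinguishability of $\Gc_k(\Theta)$. Provided $(G', H') \mapsto \|G' - H'\|_\Phi$ is lower semicontinuous in the weak topology --- true as soon as $\Phi$ consists of bounded continuous functions, as in the examples of Section \ref{sec:test} --- this function attains a strictly positive infimum $c > 0$ on $K$. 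Since $\Theta$ is compact, $W_{2d_1-1}^{2d_1-1}$ is uniformly bounded by some $M < \infty$, so in the escape case one has $t \leq M$ and therefore $\|G - \hat{G}_n\|_\Phi \geq c \geq (c/M)\, t$. Taking $C(G_0) := \min(C',\, c/M)$ merges both cases into \eqref{eqn:convergencerategenerallem}. The main obstacle is the lower-semicontinuity step: pointwise positivity (from distinguishability alone) does not upgrade to a uniform positive lower bound on the compact set $K$ without some regularity of the test functions in $\Phi$, so this is where one must lean on structural properties of $\Phi$ beyond the bare assumptions of the lemma.
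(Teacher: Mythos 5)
Your proposal is correct, and for part \ref{itema:lem:convergencerate} it coincides with the paper's treatment, which simply observes that the inclusion is a restatement of the global inverse bound. For part \ref{itemb:lem:convergencerate} the paper offers no argument at all ("we omit \ldots due to the similarity", deferring to the source reference), but the claim is genuinely not a verbatim restatement of \eqref{eqn:localinversebound}, since that bound constrains \emph{both} arguments to a neighborhood of $G_0$ while $\hat{G}_n$ in \eqref{eqn:convergencerategenerallem} ranges over all of $\Gc_k(\Theta)$. Your case split --- local bound when $\hat{G}_n$ stays near $G_0$, and a compactness-plus-distinguishability argument giving a uniform positive lower bound on $\|G-\hat{G}_n\|_\Phi$ in the escape case, combined with the uniform bound $W_{2d_1-1}^{2d_1-1}\le M$ --- is exactly the intended bridge; indeed it is why compactness of $\Theta$ and distinguishability appear in the hypotheses of part (b) but not part (a), and it is the content of the "equivalent formulations" lemma in the cited work in which only one argument need lie near $G_0$. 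Your caveat is also well taken: upgrading pointwise positivity of $\|G'-H'\|_\Phi$ on the compact set $K$ to a positive infimum does require lower semicontinuity of $(G',H')\mapsto\|G'-H'\|_\Phi$ in the weak topology, i.e.\ continuity of the test functions; this is an implicit standing regularity assumption on $\Phi$ (satisfied in every example and in all later theorems, where $\Phi$ is assumed continuously differentiable) rather than a defect of your argument.
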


Although there is nothing to prove about \eqref{eqn:globalconvergencerategenerallem}, as it follows directly from the hypothesis that \eqref{eqn:globalinversebound} holds, there are some important interpretations (we omit the interpretations of
\eqref{eqn:convergencerategenerallem} due to the similarity).
Equation \eqref{eqn:globalconvergencerategenerallem} states that the event on $(\Xf,\Xc)$ defined in terms of a Wasserstein distance between mixing measures is a subset of the event defined in terms of $\|G -\hat{G}_n \|_{\myF}$. To quantify the convergence rate in the Wasserstein distance, it suffices to have a control on $\|G -\hat{G}_n \|_{\myF}$. 

Since \eqref{eqn:globalconvergencerategenerallem} is a relationship of events on the probability space $(\Xf,\Xc)$, we may evaluate the events under any probability measure to obtain a upper bound of the tail probability $\Pb\left(W^{2k-1}_{2k-1}(G,\hat{G}_n)\geq t\right)$. The natural probability measure is the true $\Pb_{G^*}$ under which one obtain the observed i.i.d. samples $X_1,\ldots,X_n\overset{\iidtext}{\sim} \Pb_{G^*}$.
Another choice for the probability measure  is a posterior distribution $\Pi(\cdot|X_1,\cdots,X_n)$, which is derived via Bayes' formula from some prior distribution $\Pi$ 
on the space of mixing measures (see \cite{nguyen2013convergence,wei2022convergence}). Convergence rates of a variety of estimators $\hat{G}_n$ have been obtained this way in the literature \citep{ho2016strong,ho2016convergence,heinrich2018strong,wu2020optimal,doss2020optimal,wei2022convergence}, ranging from Bayesian estimators, maximum likelihood estimators, (MLE) to minimum distance and moment based methods. Claims such as Eq.\eqref{eqn:globalconvergencerategenerallem} and Eq.\eqref{eqn:convergencerategenerallem} allow one to transfer any probability analysis of the tail event expressed in terms of the $\Phi$-distances into that of a Wasserstein distance. 


\begin{comment}
\begin{rem} Conditions involved include
\begin{itemize}
    \item $\Theta$ being compact.
    \item The collection of $\phi \in \Phi$ is differentiable up to certain order depending on $k$, 
    and satisfies a suitable linear independence property.
    \item These inverse bounds are useful for establishing local minimax bounds on estimators of $G^*$.
\end{itemize}
\myeoe
\end{rem}
\end{comment}

\subsection{Minimum distance estimators}
We have explained how inverse bounds of the forms \eqref{Eq:inversePhi}, \eqref{eq:moments} or \eqref{eq:inverseintegral} are "estimator-free" structural results that can be used to derive rates of convergence of the mixing measure for any estimator in consideration, provided that one can obtain a convergence analysis for the right hand side of such bounds. For the remainder of this section we shall demonstrate that such inverse bounds are also useful in deriving concrete estimation procedures for the mixing measure. In fact, almost all M-estimation procedures investigated in the existing literature on mixture models may be presented in this way --- as a minimization procedure of the right hand side of an inverse bound. More precisely, they correspond to the minimization of an empirical version of a distance on the space of data population's distributions. Now, we will specifically present a general class of minimum $\Phi$-distance estimator, which was formulated by \cite{wei2023minimum}, although a similar approach may be described for the integral $\Phi$-distance as well. We will then point out that the minimum $\Phi$-distance estimator reduces to existing methods by picking out a specific class of test functions $\Phi$, but it also leads to novel methods that are suitable for more complex data domains. 

The general idea of a minimum distance estimator for parametric models is intuitive and well-known, where the distance in question may be Hellinger distance \citep{beran1977}, or KL divergence \citep{vandeGeer-00}, and so on. Similarly, to make use of a $\Phi$-distance to obtain an estimate for $G$, given an $n$-i.i.d. sample $\Dset = \{X_1,\ldots,X_n\}$, which is distributed according to the true mixture distribution $P_{G^*}$, one might consider to minimize a $\Phi$-distance between $P_{G}$ and an empirical estimate of $P_{G^*}$. There is a non-trivial issue, however. Note that the $\Phi$-distance $\|G-G^*\|_{\Phi} = \sup_{\phi\in \Phi} |G\phi - G^*\phi|$, which in this general form defines a distance on the space of mixing measures, not the space of mixture distributions. Thus, in order to employ this distance for the estimation purpose, the quantity $G^*\phi$ must be estimated from the empirical sample $\Dset$. Since $G^*$ is unknown, one should be prepared to estimate $G\phi$ for all $G\in \Gc_{k}$ and $\phi \in \Phi$. This motivates the following definition.

%

\begin{definition}
\label{def:estimatable}
The family $\myF$ is said to be \emph{estimatable} on $\Gc_{k}(\Theta)$ if
for each $\myf\in \myF$, there exists a measurable function $t_\phi$ defined on $\Xf$ such that 
$G \myf=\int_{\Xf} t_\myf(x) P_{G}(\dd x)$ for any $G\in \Gc_k(\Theta)$. In other words, the statistic $t_\myf(X_1)$ is an unbiased estimate for $G\myf$.    
\end{definition}
Clearly, if $\myF$ is estimatable 
then $G^* \myf = \Eb t_\myf(X_1)$, where the expectation is under the true distribution $P_{G^*}$. Thus, $G^*\phi$ admits an unbiased estimate via $\frac{1}{n} \sum_{i\in [n]} t_\myf(X_i)$. 

A general example for estimable function class $\Phi$ can be found in Example \ref{exa:divationbetweenmixturedistributions}: for a function $\phi \in \Phi$ which may be expressed as $\phi(\theta) = \int h(x)f(\dd x|\theta)$, due to \eqref{eqn:Gphi}, the statistic $h(X_1)$ yields an unbiased estimate for $G^*\phi$, as required in the above definition.

\begin{comment}
We say the finite mixture model $\Pb_G$ is \emph{identifiable} on $G_k(\Theta)$ if $G\mapsto \Pb_{G}$ is injective on $\Gc_k(\Theta)$. The next lemma is a straightforward result connecting several definitions. 

\begin{lem}
    If $\myF$ is estimatable on $\Gc_{k}(\Theta)$ and $\Gc_k(\Theta)$ is distinguishable by $\myF$, then the mixture model is identifiable on $\Gc_k(\Theta)$. \myeoe
\end{lem}
\end{comment}

Provided that $\myF$ is estimatable on $\Gc_{k}(\Theta)$, and assume that $\Theta$ is compact, we obtain an estimate for mixing measures by solving
$$
\hat{G}_n(\ell)\in \argmin_{G\in \Gc_\ell(\Theta)} \sup_{\myf\in\myF}\left|G \myf-\frac{1}{n}\sum_{i\in [n]}t_\myf(X_i)\right|, \quad \forall \ell\geq 1. 
$$ 
 Note that $\hat{G}_n(\ell)$ is well-defined since 
 the objective function of the above minimization problem
 is non-negative 
 and lower semicontinuous with respect to $G$, which implies that its minimum is attained on the compact space $\Gc_\ell(\Theta)$. Given that $G^*$ has at most $k$ atoms, a natural choice for an estimate of $G^*$  would be $\hat{G}_n:=\hat{G}_n(k)$, which is termed a \emph{minimum $\myF$-distance estimator}.

It is easy to observe that any $G\in \Gc_k(\Theta)$, 
\begin{eqnarray*}
& \|\hat{G}_n-G \|_{\myF}  =  \sup_{\phi\in \Phi} |\hat{G}_n \phi - G\phi| \leq \\
& 
\sup_{\phi\in \Phi} \left|\frac{1}{n}\sum_{i\in [n]}t_\myf(X_i)- \hat{G}_n \myf \right| 
+  \left|\frac{1}{n}\sum_{i\in [n]}t_\myf(X_i)- G \myf \right|  \\
& \leq 
2\sup_{\myf\in \myF} \left|\frac{1}{n}\sum_{i\in [n]}t_\myf(X_i)- G \myf \right|, \label{eqn:Gnproperty}
\end{eqnarray*}
where the first inequality in the above display is due to the triangle inequality and the second inequality follows from the definition of $\hat{G}_n$. Thus, the $\Phi$-distance between the estimate $\hat{G}_n$ and $G$ may be controlled by the suprema of an empirical process that is associated with the function class $\Phi$ and probability measure $G$. Combining this inequality with Lemma \ref{lem:convergencerate} by putting $G=G^*$, we immediately obtain the following.


\begin{thm}
\label{thm:convergencerate}
Suppose that $\Theta$ is compact, $\Gc_k(\Theta)$ is distinguishable by $\myF$, and that $\myF$ is estimatable on $\Gc_{k}(\Theta)$. Let $\hat{G}_n$ be a minimum $\myF$-distance estimator. 
 \begin{enumerate}[label=(\alph*)]
\item 
Assume that $(\Gc_k,\Phi)$ possesses the global inverse bound \eqref{eqn:globalinversebound}. 
Then there is a positive constant  $C$, whose dependence on $ \Theta, k, \myF$ is suppressed, such that for any $G^*\in \Gc_k(\Theta)$, any $t>0$, 
\begin{align*}
\numberthis \label{eqn:globalconvergencerategeneralpro}
	& \Pb_{G^*} \left(W_{2k-1}^{2k-1}(G^*,\hat{G}_n)\geq t \right) \\
	& \leq \Pb_{G^*} \left( \sup_{\myf\in \myF} \left|\frac{1}{n}\sum_{i\in [n]}t_\myf(X_i)- G^* \myf \right| \geq C  t  \right), 
\end{align*} 
and
\begin{align*}
	& \Eb_{G^*} W_{2k-1}^{2k-1}(G^*,\hat{G}_n) \\
    & \leq C \, \Eb_{G^*}  \sup_{\myf\in \myF} \left|\frac{1}{n}\sum_{i\in [n]}t_\myf(X_i)- G^* \myf \right| . 
\end{align*}

\item \label{itemb:thm:convergencerate}
Suppose that $G_0$ has exactly $k_0\leq k$ atoms in $\Theta$ and that $(\Gc_k,\Phi,G_0)$ possesses the local inverse bound \eqref{eqn:localinversebound}.  Then there exist positive constants $r(G_0)$, $C(G_0)$ and $c(G_0)$, whose dependence on $ \Theta, k_0,k,\myF$ are suppressed, such that for any $G^*\in \Gc_k(\Theta)$ satisfying $W_1(G_0,{G^*})<r(G_0)$, 
\begin{align*}
\numberthis \label{eqn:convergencerategeneral1}
&\Pb_{G^*} \left(W_{2d_1-1}^{2d_1-1}(G^*,\hat{G}_n)\geq t \right) \\
\leq  & \Pb_{G^*} \left( \sup_{\myf\in \myF} \left|\frac{1}{n}\sum_{i\in [n]}t_\myf(X_i)- G^* \myf \right| \geq C(G_0)  t  \right), 
\end{align*}
and
\begin{align*}
& \Eb_{G^*} W_{2d_1-1}^{2d_1-1}(\hat{G}_n,G^*) 
\leq \\ & C(G_0)  \Eb_{G^*} \sup_{\myf\in \myF} \left|\frac{1}{n}\sum_{i\in [n]}t_\myf(X_i)- G^* \myf \right|. 
\end{align*}

\end{enumerate}
\end{thm}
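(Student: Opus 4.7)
The plan is to chain together two ingredients already stated in the excerpt. The first is the ``minimum-distance inequality'' appearing immediately before the theorem statement: by the triangle inequality applied to $|\hat{G}_n\phi - G\phi|$ together with the defining minimality of $\hat{G}_n$ on $\Gc_k(\Theta)$, one has
\[
\|\hat{G}_n - G\|_{\Phi} \;\leq\; 2 \sup_{\phi \in \Phi} \left|\frac{1}{n}\sum_{i\in[n]} t_\phi(X_i) - G\phi\right|, \qquad \forall\, G \in \Gc_k(\Theta).
\]
The second ingredient is Lemma \ref{lem:convergencerate}, which converts event-level bounds on $W_r^r(G^*, \hat{G}_n)$ into event-level bounds on $\|G^* - \hat{G}_n\|_\Phi$ via the appropriate uniform inverse bound. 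Instantiating the first display at $G = G^*$ makes its right-hand side precisely the empirical-process quantity that appears on the right of both claimed inequalities, so the proof is essentially a composition of these two facts plus routine bookkeeping.

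For part (a), by construction $\hat{G}_n \in \Gc_k(\Theta)$ (existence of a minimizer is guaranteed by the lower-semicontinuity argument recorded in the text, since $\Theta$, and hence $\Gc_k(\Theta)$ under $W_1$, is compact). I would then apply Lemma \ref{lem:convergencerate}\ref{itema:lem:convergencerate} with $G = G^*$ to obtain a constant $C(\Phi, \Theta, k) > 0$ for which
$\{W_{2k-1}^{2k-1}(G^*, \hat{G}_n) \geq t\} \subset \{\|G^* - \hat{G}_n\|_\Phi \geq C(\Phi, \Theta, k)\, t\}$.
Composing with the minimum-distance inequality gives the deterministic pointwise estimate
$W_{2k-1}^{2k-1}(G^*, \hat{G}_n) \leq 2\, C(\Phi,\Theta,k)^{-1}\, \sup_{\phi \in \Phi} \bigl|\tfrac{1}{n}\sum_i t_\phi(X_i) - G^*\phi\bigr|$.
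Setting $C := C(\Phi,\Theta,k)/2$ and taking $\Pb_{G^*}$-probability yields \eqref{eqn:globalconvergencerategeneralpro}; taking $\Eb_{G^*}$ on the same pointwise estimate yields the expectation bound.

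Part (b) follows by an identical two-step template, with Lemma \ref{lem:convergencerate}\ref{itemb:lem:convergencerate} replacing Lemma \ref{lem:convergencerate}\ref{itema:lem:convergencerate}. The assumption $W_1(G_0, G^*) < r(G_0)$ is exactly the side condition that lemma imposes on its free argument, and the global distinguishability hypothesis, inherited from the theorem's preamble, is what allows Lemma \ref{lem:convergencerate}\ref{itemb:lem:convergencerate} to apply to \emph{any} $\hat{G}_n \in \Gc_k(\Theta)$ rather than only to candidates already known a priori to lie in a $W_1$-neighborhood of $G_0$. Chaining as before produces \eqref{eqn:convergencerategeneral1} and its expectation counterpart with $C(G_0)$ equal to half the constant furnished by the local inverse bound.

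The main ``obstacle,'' if one can call it that, is not analytic but accounting: tracking that the factor $2$ from the minimum-distance inequality is absorbed correctly into $C$ and $C(G_0)$, and checking measurability of $\hat{G}_n$ so that the inclusion of events in Lemma \ref{lem:convergencerate} transfers to $\Pb_{G^*}$-probability statements. All of the genuinely nontrivial content, namely the identifiability of $G^*$ through $\Phi$, the compactness of the feasible set, and the uniform inverse bound itself, has already been distilled into Lemma \ref{lem:convergencerate} and into the minimum-distance inequality preceding the theorem, so no further analytic work is required.
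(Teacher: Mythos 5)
Your proposal is correct and follows essentially the same route as the paper, which explicitly derives the theorem by "combining" the minimum-distance inequality $\|\hat{G}_n-G\|_{\Phi}\leq 2\sup_{\phi}|\frac{1}{n}\sum_i t_\phi(X_i)-G\phi|$ (instantiated at $G=G^*$) with Lemma \ref{lem:convergencerate}. The bookkeeping of the factor $2$ into the constants and the use of the side condition $W_1(G_0,G^*)<r(G_0)$ in part (b) are handled exactly as intended.
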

In both claims of the above theorem, the convergence behavior of the estimate $\hat{G}_n$ toward the true mixing measure $G^*$, as quantified by the tail probability of a Wasserstein distance or its expectation, is upper bounded by that of the suprema of the empirical process associated with the function class 
$\{t_\phi |\phi \in \Phi\}$. The analysis of the latter is a well-studied subject of empirical process theory \citep{vanderVaart-Wellner-96,vandeGeer-00,wainwright2019high}; it is a standard exercise to derive their convergence rates for a specific class of functions.

\begin{rem}
\label{rem:minimax}
We will see the examples that follow that for a broad range of $\Phi$, it is possible to obtain a sharp bound for the tail probability in the right hand side of Eq.\eqref{eqn:globalconvergencerategeneralpro} and moreover,
\begin{equation}
\label{eqn:empiricalprocessbound}
    \Eb_{G*} \sup_{\phi\in \Phi} \biggr |\frac{1}{n}\sum_{i=1}^{n} t_\phi(X_i) - G^* \phi \biggr |
=O_p(n^{-1/2})
\end{equation}
where constant in the asymptotic expression $O_p$ does not depend on the unknown $G^*$. It follows from Theorem \ref{thm:convergencerate} part (a) that $\Eb_{G^*} W_{2k-1}(G^*,\hat{G}_n) = O_p(n^{-1/(2(2k-1))})$, where the constant in this asymptotic expression does not depend on $G^*$. This slow rate, which depends on the number of atoms $k$, is known to be the optimal rate of estimation for the mixing measure in the global minimax sense.
Part (b) of the theorem provides a refinement of the behavior of the estimator $\hat{G_n}$, if one localizes $G^*$ and $\hat{G}_n$ to a neighborhood of a probability measure $G_0$ that has only $k_0\leq k$ atoms. 
The rate, then, improves to $W_{2d_1-1}(G^*, \hat{G}_n)) = O_{p}(n^{-1/(2(2d_1-1))})$, where $d_1=k-k_0+1$, but the constant in $O_p$ is dependent of $G_0$. This rate is known to be the optimal rate of estimation for the mixing measure in the local minimax sense.
\end{rem}

Suppose that one is only interested in characterizing a pointwise convergence behavior of the estimate $\hat{G}_n$ to $G^*$, i.e., by allowing the constant in the asymptotic expression to be dependent on the unknown $G^*$, the one should expect a faster rate of convergence. In this case, it suffices to appeal to the pointwise inverse bounds such as \eqref{eqn:localinverseboundfixargument} and \eqref{eqn:localinverseboundfixargumentexactfitted}. A counterpart of Theorem \ref{thm:convergencerate} is the following.

\begin{thm}
\label{thm:convergenceratepointwise}
Suppose that $\Theta$ is compact, $\Gc_k(\Theta)$ is distinguishable by $\myF$, and that $\myF$ is estimatable on $\Gc_{k}(\Theta)$. 
Moreover, assume that $(\Gc_k,\Phi, G_0)$ possesses the pointwise inverse bound \eqref{eqn:localinverseboundfixargument} for any measure $G_0 \in \Gc_k(\Theta)$ such that $G_0$ has only $k_0$ atoms, for any $k_0 \leq k$. 
Let $\hat{G}_n$ be a minimum $\myF$-distance estimator.
Then for any $G^* \in \Gc_{k}(\Theta)$ there exists $C(G^*) > 0$ depending on $G^*$ such that for all sufficiently large $n$,
the following holds for all $t > 0$:
\begin{align*}
&\Pb_{G^*}\!\left(
  W_2^2(\hat G_n, G^*) \ge t
\right) \\
\; \le\; &
\Pb_{G^*}\!\left(
  \sup_{\phi \in \Phi}
  \left|
    \frac{1}{n}\sum_{i=1}^n t_\phi(X_i) - G^* \phi
  \right|
  \ge C(G^*) t
\right).
\end{align*}
and 
\[
\mathbb{E}_{G^*}
  W_2^2(\hat G_n, G^*)
\;\le\;
C(G^*) \,
\mathbb{E}_{G^*}
  \sup_{\phi \in \Phi}
  \left|
    \frac{1}{n}\sum_{i=1}^n t_\phi(X_i) - G^* \phi
  \right|.
\]
\end{thm}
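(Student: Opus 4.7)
The plan is to establish the pathwise event inclusion
$\{W_2^2(\hat{G}_n,G^*)\ge t\} \subseteq \{M_n \ge C(G^*)\,t\}$
(valid for all $t>0$ and every sample path), where $M_n := \sup_{\phi\in\Phi}\bigl|\tfrac{1}{n}\sum_{i\in[n]} t_\phi(X_i) - G^*\phi\bigr|$, from which both displayed inequalities are immediate --- the tail bound by monotonicity of $\Pb$, and the expectation bound by the layer-cake identity $\Eb_{G^*}W_2^2(\hat{G}_n,G^*) = \int_0^\infty \Pb_{G^*}(W_2^2(\hat{G}_n,G^*) \ge t)\,dt$ together with a change of variables (which absorbs a factor $1/C(G^*)$ into a redefinition of the constant). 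The starting ingredient is already in place: the inequality $\|\hat{G}_n - G^*\|_\Phi \le 2 M_n$, derived in the display preceding Theorem~\ref{thm:convergencerate} from the triangle inequality and the defining minimization property of $\hat{G}_n$.

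The key preparatory step is a topological localization. Let $r(G^*),C_1(G^*) > 0$ be the radius and constant furnished by the pointwise inverse bound \eqref{eqn:localinverseboundfixargument} applied at $G_0 = G^*$. I would show that there exists $\delta^* = \delta^*(G^*) > 0$ such that every $G\in\Gc_k(\Theta)$ with $\|G-G^*\|_\Phi < \delta^*$ satisfies $W_1(G,G^*) < r(G^*)$. Compactness of $\Theta$ renders $\Gc_k(\Theta)$ $W_1$-compact (it is weakly closed in $\Pc(\Theta)$, since the weak limit of measures with at most $k$ atoms has at most $k$ atoms, by a subsequencing argument on the atoms); if the implication failed, a sequence $G_n\in\Gc_k(\Theta)$ with $\|G_n-G^*\|_\Phi\to 0$ but $W_1(G_n,G^*)\ge r(G^*)$ would have a $W_1$-cluster point $G'\ne G^*$, and continuity of $G\mapsto G\phi$ under weak convergence (for the bounded continuous test functions that are implicit in the hypotheses entailing the inverse bound) would force $\|G'-G^*\|_\Phi = 0$, contradicting distinguishability of $\Gc_k(\Theta)$ by $\Phi$. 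A pathwise dichotomy then closes the proof: on $\{\|\hat{G}_n-G^*\|_\Phi < \delta^*\}$ the localization together with the pointwise inverse bound yields $W_2^2(\hat{G}_n,G^*) \le C_1(G^*)\,\|\hat{G}_n-G^*\|_\Phi \le 2\,C_1(G^*)\,M_n$, whereas on its complement $M_n \ge \delta^*/2$ directly from $\|\hat{G}_n-G^*\|_\Phi \le 2 M_n$. Setting $R^2 := (\operatorname{diam}\Theta)^2 < \infty$ and choosing $C(G^*) := \min\bigl\{1/(2 C_1(G^*)),\ \delta^*/(2 R^2)\bigr\}$, the inclusion is vacuous for $t > R^2$ (since $W_2^2(\hat{G}_n,G^*)\le R^2$), and for $t\le R^2$ either branch of the dichotomy delivers $M_n \ge C(G^*)\,t$.

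The main obstacle is the topological localization step: weak continuity of $G\mapsto G\phi$ is not stated as a hypothesis. In practice, however, the regularity conditions on $\Phi$ that make the pointwise inverse bound hold (e.g.\ continuous differentiability with bounded derivatives on compact $\Theta$, as alluded to in the moment-differences remark following Theorem~\ref{thm:convergencerate}) already deliver the required continuity and boundedness, so the obstacle is more cosmetic than substantive. The qualifier ``for all sufficiently large $n$'' appears unnecessary for the pathwise argument above, and may reflect either a technical safeguard or an alternative route passing through an explicit consistency estimate for $\hat{G}_n$.
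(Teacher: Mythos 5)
Your proof is correct and follows essentially the same route as the paper: the theorem is obtained by combining the pathwise bound $\|\hat G_n-G^*\|_\Phi\le 2M_n$ from the definition of the estimator with an event inclusion derived from the pointwise inverse bound, and your localization step (small $\Phi$-distance implies small $W_1$-distance, via $W_1$-compactness of $\Gc_k(\Theta)$ plus distinguishability) is precisely the compactness argument the paper invokes for Lemma~\ref{lem:localtoglobal} and part (b) of Lemma~\ref{lem:convergencerate}. Your observation that the needed (lower semi)continuity of $G\mapsto\|G-G^*\|_\Phi$ is implicit in the regularity already assumed for the inverse bounds and for the well-definedness of $\hat G_n$ is also consistent with the paper's treatment.
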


\begin{comment}
As discussed in Remark \ref{rem:2.15},  a typical rate for the supreme of empirical processes is
    $\text{RHS} \asymp O_p\!\left(n^{-1/2}\right)$, so  
    $
    W_{2}\!\left(\hat G_n, G^\ast\right)
    \asymp
    O_p\!\left(n^{-\frac{1}{4}}\right)$. \myeoe
\end{comment}

As before, if \eqref{eqn:empiricalprocessbound} can be established, then we obtain the guarantee $W_{2}(\hat G_n, G^\ast) = O_p(n^{-\frac{1}{4}})$, where the constant in this asymptotic expression is dependent on $G^*$.

\begin{rem}
Theorem \ref{thm:convergenceratepointwise} shows that a direct estimation in overfitted case might result in a slow convergence rate. As a counterpart, it is established that the two-step-procedure by firstly estimating the number of components and then estimating the mixing measure improves the rate to $n^{-\frac{1}{2}}$; see \cite{heinrich2018strong,wei2023minimum}. \myeoe
\end{rem}


\subsection{Examples}
\label{sec:MMD1}

Let us return to the examples of the test function class $\Phi$ described in Example \ref{exa:divationbetweenmixturedistributions}: 
$\myF=\left\{ \theta \mapsto \int h(x) f(\dd x|\theta) | h \in \Hc \right\}$ where $\Hc$ is a class of bounded and measurable functions on $(\Xf,\Xc)$. It was already noted earlier that this function class is estimatable. Due to \eqref{eqn:integraldistance}, $\|G-H\|_\Phi$ may be viewed as $\Hc$-distance on the space of mixture distributions: $\|G-H\|_{\Phi} = \|P_G - P_H\|_{\Hc}$, which is also known as integral probability metric (IPM) distance. The minimum $\Phi$-distance estimator may be equivalently written as
\begin{align}
\label{eqn:ipm}
\hat{G}_n\in & \argmin_{G\in \Gc_k(\Theta)} \|\Pb_{G} - \hat{\Pb}_n  \|_{\Hc} ,
\end{align}
where $\hat{\Pb}_n:=\frac{1}{n}\sum_{i\in [n]}\delta_{X_i}$ denotes the empirical measures. Thus, we refer to the minimum $\myF$-distance estimators in this case as \emph{minimum IPM estimators} for which we shall present two different instances.

\vspace{.1in}
\noindent\underline{Minimum KS-distance estimator} 
%
Suppose that $(\Xf,\Xc)=(\Rb^d,\Bc(\Rb^d))$. Take $\Hc := \left\{1_{(-\infty,x]}(\cdot) \mid x\in \Rb^d \right\}$ where $(-\infty,x]:=(-\infty,x^{(1)}]\times \ldots \times(-\infty,x^{(d)}]$. 
For such $\Hc$, we have  $\myF = \myF_0 = \{ \theta \mapsto F(x \mid \theta) \mid x\in \Rb^d  \}$, where $F(x \mid \theta)$ stands for multivariate cumulative distribution function (cdf) of the probability kernel $f(\dd x|\theta)$ on $\Rb^d$.
Accordingly, 
\begin{align*}
\|G -H \|_{\myF_0}= & \sup_{x \in \Rb^d} \left|\int_{(-\infty,x]}  d\Pb_G-\int_{(-\infty,x]} d\Pb_H\right| \\
=: & \KS(\Pb_G,\Pb_H), 
\end{align*}
where $\KS(\cdot,\cdot)$ stands for Kolmogorov–Smirnov (KS) distance. 
The corresponding minimum $\Phi$-distance (also minimum IPM) estimator
takes the following familiar form \citep{deely1968construction,chen1995optimal,heinrich2018strong}:
$$
\hat{G}_n \in \argmin_{G\in \Gc_k}\KS(\Pb_{G},\hat{\Pb}_n).
$$ 
That Eq.~\eqref{eqn:empiricalprocessbound} holds for the class $\Phi_0$ is a well-known fact in empirical process theory, i.e., a strengthening of Glivenko-Cantelli theorem via Dvoretzky-Kiefer-Wolfowitz inequality. Thus we have verified all the specific claims made in Remark \ref{rem:minimax}  for the corresponding minimum KS distance estimator. These claims were first established by \cite{heinrich2018strong} who corrected an earlier study of \cite{chen1995optimal} for the case $\Xf=\Rb$. For a precise statement of the uniform convergence theorem of the minimum KS distance estimator in the multivariate case of $\Xf$ as considered here, see \cite{wei2023minimum} (cf. Theorem 3.3). 

\vspace{.1in}
\noindent \underline{Minimum MMD estimator}
We can obtain a different estimator for the mixing measure by selecting another class of functions $\Hc$ in the minimum IPM estimation framework. In the remainder of this section we shall present one such instance. This instance is particularly interesting because it is applicable to a broader range of data domain $\Xf$, including when $\Xf$ is non-Euclidean and/or of high dimensions. 
Our approach is to take $\Hc$ to be a reproducing kernel Hilbert space of functions on $\Xf$, then the corresponding $\Phi$-distance is also known as the maximum mean discrepancy (MMD) in the RKHS literature \citep{gretton2012kernel}. Thus from here on,
we shall call this particular class of minimum IPM estimator the \emph{minimum MMD estimator}. 
As we shall see shortly, the choice of RKHS for function class $\Hc$ is uniquely desirable for several reasons: the function class is sufficient "lean", so that guarantees regarding the associated empirical processes such as Eq.~\eqref{eqn:empiricalprocessbound} continue to hold. It is also sufficiently rich, so that identifiability conditions required to establish the inverse bounds assumed in the statement of Theorem \ref{thm:convergencerate} and \ref{thm:convergenceratepointwise} also hold as well. Finally, although not the focus of this article, the computation of the MMD is quite simple, making this estimator of potential interest from a practical viewpoint.

\begin{comment}
which we call the minimum MMD estimator. MMD stands for maximum mean discrepancy, a metric that arises from a particular choice of $\Phi$ using reproducing kernel Hilbert spaces (RKHS) \cite{gretton2012kernel}. 
We emphasize that while the minimum KS estimator may be difficult to apply to non-Euclidean or high-dimensional or complex structured data domain $\Xf$ (since the Kolmogorov-Smirnov distance evaluation involves finding the supremum over $\Xf$), the minimum MMD may be more applicable in such settings thanks to the powerful machinery of the RKHS. 
\end{comment}

 First, let us recall some basic background of the RKHS and the associated MMD metric. In this section, $\Xf$ is assumed to be any topological space endowed with the $\sigma$-algebra $\Xc=\Bc(\Xf)$, the Borel measurable sets.  
 Consider a real-valued symmetric and positive semidefinite kernel function $\ker(\cdot,\cdot)$ on the measurable space $(\Xf,\Xc)$.
Let $\Hc$ denote the  RKHS associated with the reproducing kernel $\ker(\cdot,\cdot)$  with its inner product $\langle \cdot, \cdot \rangle_\Hc$; 
for more details of RKHS, see, e.g., \cite{Saitoh88,steinwart2008support}. 

Denote by $\Mc_b(\Xf,\Xc)$ the space of all finite signed measures on $(\Xf,\Xc)$. Each $\Pb\in \Mc_b(\Xf,\Xc)$ defines  a linear map $h\mapsto Ph := \int_\Xf h d\Pb $ on $\Hc$. Suppose  $\ker(\cdot,\cdot)$ is bounded hereafter, i.e., $\|\ker\|_\infty:=\sup_{x\in \Xf} \sqrt{\ker(x,x)} <\infty $, and then the above linear map is bounded and hence $\Pb$ can be identified as a member $\mu(\Pb)$ in $\Hc$ by Riesz Representation Theorem \cite[Lemma 26]{sriperumbudur2011universality}, given as below:
\begin{equation*}
\label{eq:mu}
\mu(\Pb)(\cdot) = \int \ker(\cdot,x) d\Pb(x)\in \Hc, \quad \forall \Pb\in \Mc_b(\Xf,\Xc). 
\end{equation*}

Denote by $\Pc(\Xf,\Xc)$ the space of all probability measures on $(\Xf,\Xc)$. 
Then, the \emph{maximum mean discrepancy} (MMD) associated with the kernel $\ker$ for a pair $\Pb,\Qb\in \Pc(\Xf,\Xc)$ is defined as, cf. \cite{gretton2012kernel}:
\begin{align*}
\numberthis \label{eq:mmd}
&\MMD^2(\Pb,\Qb) \\
:= & \Eb \ker(Z,Z') - 2 \Eb \ker(Z,Y) + \Eb \ker(Y,Y'),
\end{align*}
where $Z$ and $Z'$ are independent random variables with distribution $\Pb$, and $Y$ and $Y'$ independent random variables with distribution $\Qb$ \cite[Lemma 4, Lemma 6]{gretton2012kernel}. 

Next, a bounded measurable kernel is called a \emph{characteristic kernel} if the map $\mu: \Pc(\Xf,\Xc)\to\Hc$ is injective, i.e., $\MMD(\Pb,\Qb)=0$ if and only if $\Pb=\Qb\in \Pc(\Xf,\Xc)$. If a kernel is bounded, measurable and characteristic, then $\MMD$ is a valid a metric on $\Pc(\Xf,\Xc)$.  
Thus, the map $\mu$ provides a natural embedding of the space of probability measures $\Pc(\Xf,\Xc)$ into the RKHS $\Hc$ associated with the characteristic kernel $\ker(\cdot,\cdot)$. We refer to \cite{fukumizu2004dimensionality,muandet2017kernel,wei2023minimum} for further details on  MMD. 

Back to the IPM estimation framework \eqref{eqn:ipm}, let $\Hc_1$ be the unit ball of the RKHS $\Hc$, and take 
$$\myF = \myF_1= \biggr \{\phi: \theta \mapsto \phi(\theta)=\int h(x) f(\dd x|\theta)  \, \biggr | \, h\in \Hc_1  \biggr \}.$$ 
Then 
\begin{align}
    \|G -H \|_{\myF_1} = \|\Pb_G - \Pb_H \|_{\Hc_1}= \MMD(\Pb_G,\Pb_H),
\end{align}
the MMD between $\Pb_G$ and $\Pb_H$ as given in Eq. \eqref{eq:mmd}, where the second equality is a simple consequence of the reproducing property of kernel $\ker$.  
A minimum IPM estimator for this choice of $\myF_1$ is now called a minimum MMD estimator associated with the kernel function $\ker$, and takes the form
\begin{align*}
\hat{G}_n \in 
& \argmin_{G'\in \Gc_k(\Theta)} \MMD^2(\Pb_{G'},\hat{\Pb}_n) \\
= & \argmin_{\substack{p'\in \Delta^{k-1}\\ \theta'_1,\ldots,\theta'_k\in  \Theta}}  \sum_{i,j\in [k]} p'_i p'_j K(\theta'_i,\theta'_j) - 2 \sum_{i\in [k]} p'_i J_n(\theta'_i)
\end{align*}
where the last line follows from Eq.~\eqref{eq:mmd}, and
\begin{align*}
K(\theta,\theta')  =& \int \ker(z,z') f(\dd z| \theta) f(\dd z' | \theta'), \\
J_n (\theta) = &  \frac{1}{n} \sum_{i\in [n]} \int \ker(x,X_i) f(\dd x| \theta),
\end{align*}
with $\Delta^{k-1}:=\{p'\in \Rb^k \mid p'_i\geq 0, \ \sum_{i\in [k]}p'_i=1  \}$.  

\begin{comment}
\RestyleAlgo{ruled}
\begin{algorithm}
\caption{Minimum MMD estimators} \label{alg:three}
\KwData{$X_1,\ldots, X_n\overset{\iidtext}{\sim} \Pb_{G^*}$}
\KwResult{$\hat{G}_n$}

$(\hat{p},\hat{\theta}_1,\ldots,\hat{\theta}_k)\in \argmin_{\substack{p'\in \Delta^{k-1}\\ \theta'_1,\ldots,\theta'_k\in  \Theta}}  \sum_{i,j\in [k]} p'_i p'_j K(\theta'_i,\theta'_j) - 2 \sum_{i\in [k]} p'_i J_n(\theta'_i)$; \\ 
$\hat{G}_n =\sum_{i\in [k]} \hat{p}_i\delta_{\hat{\theta}_i}$
\end{algorithm}
\end{comment}

The minimum MMD estimators have been studied in \cite{briol2019statistical, cherief2022finite} for the purpose of density estimation. It was investigated for mixing measure estimation in \cite{wei2023minimum}, where the following theoretical guarantees for the convergence rates of the mixing measure were established by applying the general Theorem \ref{thm:convergencerate}.

\begin{thm}
\label{thm:MMDuniformrate}
Suppose that $\Theta$ is compact and the mixture model is identifiable on $\Gc_k(\Theta)$. Let $\hat{G}_n$ be a minimum MMD estimator. Consider a bounded and measurable kernel $\ker(\cdot,\cdot)$ on $(\Xf,\Xc)$ and the map $\mu: \Mc_b(\Xf,\Xc)\to \Hc$ is injective. 
 \begin{enumerate}[label=(\alph*)]
\item \label{itema:thm:MMDuniformrate}
Suppose the global inverse bound \eqref{eqn:globalinversebound} with $\myF=\myF_1$ holds. Then there exists a constant $C$, where its dependence on $ \Theta, k$ is suppressed, such that 
\begin{align*}
\sup_{G^*\in \Gc_k(\Theta)}	\Eb_{G^*} W_{2k-1}^{2k-1}(G^*,\hat{G}_n)
	\leq C \frac{\|\ker\|_\infty}{\sqrt{n}} . 
\end{align*}

\item \label{itemb:thm:MMDuniformrate}
Fix $G_0\in \Ec_{k_0}(\Theta)$. Suppose local inverse bound \eqref{eqn:localinversebound} with $\myF=\myF_1$ hold. 
Then for any $G_0\in \Ec_{k_0}(\Theta)$, there exists $r(G_0)$, $C(G_0)$ and $c(G_0)$, where their dependence on $ \Theta, k_0,k$ is suppressed, such that 

\begin{align*}
& \sup_{\substack{G^*\in \Gc_k(\Theta): \\W_1(G_0,G^*)<r(G_0) } } \Eb_{G^*} W_{2d_1-1}^{2d_1-1}(\hat{G}_n,G^*)  \\
\leq & C(G_0)  \frac{\|\ker\|_\infty}{\sqrt{n}}. 
\end{align*}

\end{enumerate}
\end{thm}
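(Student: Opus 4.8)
The plan is to obtain this as an immediate corollary of the general machinery already assembled, namely Theorem~\ref{thm:convergencerate}, by verifying its hypotheses for the minimum MMD estimator and then controlling the governing empirical process. First I would recognize the minimum MMD estimator as a special case of the minimum $\Phi$-distance estimator with $\Phi=\Phi_1=\{\theta\mapsto\int h(x)f(\dd x|\theta):h\in\Hc_1\}$, where $\Hc_1$ is the unit ball of the RKHS. As observed in Example~\ref{exa:divationbetweenmixturedistributions}, such a $\Phi$ is automatically estimatable on $\Gc_k(\Theta)$ via $t_\phi=h$, since $G\phi=\int h\,\dd P_G$ by Fubini; moreover $\|G-H\|_{\Phi_1}=\MMD(P_G,P_H)$ by the reproducing property. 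The hypothesis that $\mu$ is injective together with identifiability of the mixture model on $\Gc_k(\Theta)$ yields that $\Gc_k(\Theta)$ is distinguishable by $\Phi_1$: if $G\neq H$ in $\Gc_k(\Theta)$ then $P_G\neq P_H$ by identifiability, hence $\mu(P_G)\neq\mu(P_H)$ by injectivity, hence $\MMD(P_G,P_H)>0$. So all structural assumptions of Theorem~\ref{thm:convergencerate} except the inverse bounds themselves are in place, and those inverse bounds \eqref{eqn:globalinversebound} and \eqref{eqn:localinversebound} with $\Phi=\Phi_1$ are supplied as hypotheses in parts (a) and (b) respectively.

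With these verifications in hand, Theorem~\ref{thm:convergencerate}(a) gives
\[
\sup_{G^*\in\Gc_k(\Theta)}\Eb_{G^*}W_{2k-1}^{2k-1}(G^*,\hat G_n)\;\le\;C\,\sup_{G^*\in\Gc_k(\Theta)}\Eb_{G^*}\sup_{\phi\in\Phi_1}\Bigl|\tfrac1n\sum_{i\in[n]}t_\phi(X_i)-G^*\phi\Bigr|,
\]
and similarly Theorem~\ref{thm:convergencerate}(b) gives the localized bound, with constants depending on $G_0$. It therefore remains only to bound the empirical process uniformly over all mixture distributions. By the identification $t_\phi=h$ and $\|G-H\|_{\Phi_1}=\|P_G-P_H\|_{\Hc_1}$, the empirical process equals $\MMD(P_{G^*},\hat P_n)$, where $\hat P_n=\tfrac1n\sum_{i}\delta_{X_i}$ is the empirical measure. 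A standard RKHS embedding computation — writing $\MMD(P_{G^*},\hat P_n)=\|\mu(P_{G^*})-\mu(\hat P_n)\|_\Hc=\bigl\|\tfrac1n\sum_i(\ker(\cdot,X_i)-\Eb\ker(\cdot,X_1))\bigr\|_\Hc$, squaring, taking expectations, and using independence to kill cross terms — yields
\[
\Eb_{G^*}\MMD(P_{G^*},\hat P_n)\;\le\;\bigl(\Eb_{G^*}\MMD^2(P_{G^*},\hat P_n)\bigr)^{1/2}\;\le\;\frac{\|\ker\|_\infty}{\sqrt n}\cdot\text{(absolute constant)},
\]
uniformly over all $G^*$ (indeed over all probability measures on $\Xf$), since $\|\ker(\cdot,x)\|_\Hc^2=\ker(x,x)\le\|\ker\|_\infty^2$. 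Plugging this uniform $O(\|\ker\|_\infty/\sqrt n)$ bound into the two displays from Theorem~\ref{thm:convergencerate} produces exactly the asserted rates, with the constant in part (a) free of $G^*$ and the constant in part (b) depending on $G_0$ through $r(G_0),C(G_0)$.

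I do not anticipate a serious obstacle here, since the heavy lifting — deriving inverse bounds from identifiability conditions and converting them into estimator guarantees — has already been abstracted into Theorem~\ref{thm:convergencerate}, and the inverse bounds for $\Phi_1$ are taken as hypotheses rather than proved in this statement. The only mildly delicate point is the measurability and boundedness bookkeeping needed so that $\MMD(P_{G^*},\hat P_n)$ is a genuine random variable and the Riesz-representation identification of measures with RKHS elements applies: this is where boundedness of $\ker$ (hence $\|\ker\|_\infty<\infty$) and measurability of $\ker$ enter, guaranteeing $\mu(P)\in\Hc$ for every finite signed measure $P$ and that each $h\in\Hc_1$ is a bounded measurable function so $t_\phi=h$ is a legitimate unbiased estimator. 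Once that is noted, the proof is a direct concatenation of the cited results.
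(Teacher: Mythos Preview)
Your proposal is correct and follows essentially the same route as the paper: verify that $\Phi_1$ is estimatable and that $\Gc_k(\Theta)$ is distinguishable by $\Phi_1$ (via identifiability plus injectivity of $\mu$), apply Theorem~\ref{thm:convergencerate}, and then control the governing empirical process $\MMD(P_{G^*},\hat P_n)$ by the standard RKHS variance computation giving a uniform $O(\|\ker\|_\infty/\sqrt{n})$ bound. The paper explicitly frames the theorem as ``established by applying the general Theorem~\ref{thm:convergencerate},'' and the ingredients you list match those the paper invokes.
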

For brevity, we omit a counterpart statement regarding the pointwise convergence by applying Theorem \ref{thm:convergenceratepointwise}.

\section{Strong and weak identifiability theory}
\label{sec:identifiability}

 In Sections \ref{sec:intro} and \ref{sec:finite} we have demonstrated how inverse bounds play an essential role in  transferring convergence rates from the density level to the parameter level in finite mixture models. They are also instrumental in deriving new estimators. 
 The focus of this section is to establish such inverse bounds. We shall present sufficient conditions under which they can be derived; such conditions are broadly known as identifiability conditions. When sufficiently strong identifiability conditions are satisfied for the kernel $f$, best possible inverse bounds may be established under considerable generality. When strong identifiability conditions are violated, we will walk into the territory of a weak identifiability theory, which is discussed in Sections \ref{sec:weakidentifiability} and \ref{sec:singular}. 

\subsection{Sufficient conditions for uniform inverse bounds}
\label{sec:stronguniform}


%

In this subsection we provide sufficient conditions to establish uniform inverse bounds \eqref{eqn:globalinversebound} and \eqref{eqn:localinversebound}.
Because inverse bounds represent a refinement of identifiability condition, we should expect the sufficient conditions to be stronger than the standard identifibility condition. Due to the fact that the mixture distribution $P_G$ is but a linear combination of the kernel components $\{f(\cdot|\theta)|\theta \in \Theta\}$, the standard identifiability condition for finite mixture models is typically stated in terms of linear independence of these kernel bases. A natural way of strengthening the standard identifiability condition is to consider a collection of functions obtained by taking partial derivatives up to a certain order of the kernel $f$ with respect to all parameters of interest, and to insist that such functions be linearly independent. 

This leads to the following standard notion of strong identifiability which is defined more generally on the collection of test functions $\Phi$. For a multi-index $\alpha$, the operator $D^{\alpha}$ means partial derivative of order $\alpha^{(i)}$ to the $i$-th coordinate. Note in this article that the partial derivative is always with respect to $\theta$, i.e. $D^\alpha f(x \mid \theta)=\frac{\partial^\alpha }{\partial \theta^{\alpha}} f(x \mid \theta)$.

\begin{definition}[$m$-strong identifiability] 
\label{def:mstrong}
A family $\myF$ of functions of $\theta$ is $m$-strongly identifiable if each $\myf\in\myF$ is $m$-order continuously differentiable; and for any finite set of $\ell$ distinct points $\theta_i\in \Theta$, 
$$
\sum_{i=1}^{\ell}\  \sum_{|\alpha|\leq m} a_{i\alpha} D^\alpha \myf(\theta_i)  = 0,  \quad \forall \myf\in \myF
$$    
if and only if 
$$
a_{i\alpha}=0, \quad \forall \ 0\leq |\alpha|\leq m, \ i\in [\ell].
$$
\end{definition}

If we take
  \(
    \Phi = \bigl\{ \theta \mapsto f(x \mid \theta) \;\big|\; x \in \mathcal{X} \bigr\},
  \)
  where $f$ is the mixture’s probability kernel (density), then we are reduced
  to the classical \emph{strong identifiability} condition. This condition holds
  for many families, for example exponential families with a single parameter. This classical notion of strong identifiability has been widely studied in the literature \cite{chen1995optimal,nguyen2013convergence,ho2016strong,heinrich2018strong,ho2019singularity,wei2023minimum}, which are roughly the linear independence between the mixture kernel density (or cdf) and its derivatives.

 Still, for certain class of test functions such as the monomial family,
  \(
    \Phi_2 = \bigl\{ (\theta - \theta_0)^{\alpha} \bigr\}_{|\alpha| \le 2k-1},
  \)
  in Example~\ref{exa:moment}, $m$-strong identifiability condition as defined may be violated. This motivates the following weaker sufficient conditions. 

\begin{definition}
\label{def:linearindependentdomain}
The family $\myF$ is said to be a \textit{$(m,k_0,k)$ linear independent domain} if the following hold: 1) Each $\myf\in \myF$ is $m$-th order continuously differentiable on $\Theta$; 
and 2) Consider any integer $\ell\in [k_0,2k-k_0]$, and any vector $(m_1,m_2,\ldots,m_\ell)$ such that $1\leq m_i\leq m+1$ for $i\in [\ell]$ and $\sum_{i=1}^\ell m_i\in [2k_0, 2k]$, then
for any distinct $\{\theta_i\}_{i\in [\ell] }\subset \Theta$, the  operators $\{D^\alpha|_{\theta=\theta_i}\}_{ 0\leq |\alpha|< m_i, i\in [\ell]}$  on $\myF$ are linearly independent, i.e.,  
\begin{subequations} \label{eqn:linearinddomain}
\begin{align}
\sum_{i=1}^{\ell}\  \sum_{|\alpha|\leq m_i-1} a_{i\alpha} D^\alpha \myf(\theta_i)  = &0,  \quad \forall \myf\in \myF \label{eqn:linearinddomaina}\\
\sum_{i\in [\ell]}   a_{i\bm{0}}   = &0, \label{eqn:linearinddomainb}
\end{align}
\end{subequations}
if and only if 
$$
a_{i\alpha}=0, \quad \forall \ 0\leq |\alpha|< m_i, \ i\in [\ell].
$$
\end{definition}
\begin{rem} \label{rem:linearinddomainconstraint}
The equation \eqref{eqn:linearinddomainb} can be seen as \eqref{eqn:linearinddomaina} with $\myf \equiv 1_\Theta$, the constant function $1$ on $\Theta$. So a slightly more accurate terminology should be ``the  operators $\{D^\alpha|_{\theta=\theta_i}\}_{ 0\leq |\alpha|< m_i, i\in [\ell]}$  on $\myF\cup \{1_\Theta\}$ are linear independent''. 
%
%
It is easy to see that if $\myF$ is a $(m,k_0,k)$ linear independent domain, then so is any of its supersets $\myF$ (assuming they contain only sufficiently differentiable functions so that \eqref{eqn:linearinddomaina} and \eqref{eqn:linearinddomainb} may be verified). Another useful observation is that if $\myF$ is a $(m,k_0,k)$ linear independent domain then $\myF$ is a $(m',k',k)$ linear independent domain for any $m'\leq m$ and $k'\geq k_0$.
\myeoe
\end{rem}
The advantage of the definition of $m$-strong identifiability is that it is simpler and that it does not depend on the constraints imposed by the number of atoms $k$ and $k_0$. But it is clear that the former is far more stringent: $\myF$ is $m$-strongly identifiable implies that $\myF$ is a $(m,k_0,k)$ linear independent domain for any $k_0\leq k$. The $(m,k_0,k)$ linear independent domain condition is a relaxation of $m$-strong identifiability thanks to the reduced number of equations required in Eq. \eqref{eqn:linearinddomaina} and \eqref{eqn:linearinddomainb}, ones that arise from a careful consideration of possible allocations of atoms of $k$-component mixing measures converging to a fixed $k_0$-component mixing measure. This relaxation can be impactful, especially when the function class $\Phi$ is finite.  
In that scenario, the linear system in the definition of linear independent domain is much better behaved than that in the definition of strong identifiability, since the former has less variables while the number of equations remain the same. In particular, there are several notable examples, e.g., family of monomials $\myF_2$ (see Example~\ref{exa:moment} and Section \ref{sec:momentdeviation}), that satisfy the $(m,k_0,k)$ linear independent domain condition but not $m$-strongly identifiability condition. In addition, it is shown in \cite[Example~7.6]{wei2023minimum} for mixture of multinomial distributions, by using the weaker condition of linear independent domain, the inverse bounds hold if and only if $N\geq 2k-1$ which improves the previous results \cite[Proposition 1 and Corollary 1]{manole2021estimating}.  

The following general theorem establishes that $(m,k_0,k)$ linear independent domains are sufficient conditions for establishing the uniform local inverse bounds.


\begin{thm}
\label{thm:inversebound}
Suppose that $\Theta\subset \Rb^q$ is compact. Let $d_1=k-k_0+1$.
\begin{enumerate}[label=(\alph*)]

\item \label{thm:inversebounda}
If $\myF$ is a $(2d_1-1,k_0,k)$ linear independent domain, then \eqref{eqn:localinversebound} holds for any $G_0\in \Gc_k(\Theta)$ such
that $G_0$ has exactly $k_0$ atoms in $\Theta$. 
\item \label{thm:inverseboundb}
If $\myF$ is a $(2k-1,1,k)$ linear independent domain, then \eqref{eqn:localinversebound} holds for any $G_0 \in \Gc_k(\Theta)$ that has exactly $k_0$ atoms in $\Theta$, for any $k_0 \leq k$. 
\end{enumerate}
\end{thm}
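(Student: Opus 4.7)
The plan is to prove part (a) by contradiction via a Voronoi clustering of atoms, a Taylor expansion of functions in $\Phi$, and a rescaling-compactness argument that reduces the failure of \eqref{eqn:localinversebound} to a nontrivial linear relation among the operators $\{D^\alpha|_{\theta=\theta_i}\}$ on $\Phi$, thereby contradicting the $(2d_1-1,k_0,k)$ linear independent domain hypothesis. Part (b) will follow from (a), since a $(2k-1,1,k)$ linear independent domain is automatically a $(2d_1-1,k_0,k)$ linear independent domain for every $k_0\le k$: any admissible configuration $(\ell,m_1,\dots,m_\ell)$ for the latter satisfies $\ell\le 2k-k_0\le 2k-1$, $\max_i m_i\le 2d_1\le 2k$, and $\sum m_i\le 2k$, so it is also admissible for the former.

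\textbf{Taylor expansion.} Write $G_0=\sum_{i=1}^{k_0}p_i^*\delta_{\theta_i^*}$ and suppose \eqref{eqn:localinversebound} fails. Then one extracts $G_n\ne H_n$ in $\Gc_k(\Theta)$ with $G_n,H_n\overset{W_1}{\to}G_0$ and $\|G_n-H_n\|_\Phi/W_{2d_1-1}^{2d_1-1}(G_n,H_n)\to 0$. For $n$ large, the open Voronoi cells about $\theta_1^*,\dots,\theta_{k_0}^*$ partition the supports of $G_n$ and $H_n$, and I decompose $G_n=\sum_i\sum_j p_{ij}^{(n)}\delta_{a_{ij}^{(n)}}$ and $H_n=\sum_i\sum_j q_{ij}^{(n)}\delta_{b_{ij}^{(n)}}$ accordingly, with $i$ indexing the cluster. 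Taylor-expanding any $\phi\in\Phi$ (which is $(2d_1-1)$-times continuously differentiable) about each $\theta_i^*$ to order $2d_1-1$ and using $\sum_{i,j}p_{ij}^{(n)}=\sum_{i,j}q_{ij}^{(n)}=1$ gives
\[
G_n\phi-H_n\phi \;=\; \sum_{i=1}^{k_0}\sum_{|\alpha|\le 2d_1-1}B_{i\alpha}^{(n)}\,D^\alpha\phi(\theta_i^*) \;+\; R_n(\phi),
\]
where $B_{i\alpha}^{(n)}=\tfrac{1}{\alpha!}\bigl(\sum_j p_{ij}^{(n)}(a_{ij}^{(n)}-\theta_i^*)^\alpha - \sum_j q_{ij}^{(n)}(b_{ij}^{(n)}-\theta_i^*)^\alpha\bigr)$ satisfies $\sum_i B_{i\bm{0}}^{(n)}=0$ by mass conservation, and $R_n(\phi)$ is uniformly $o\bigl(\max_{i,j}\|a_{ij}^{(n)}-\theta_i^*\|^{2d_1-1}+\|b_{ij}^{(n)}-\theta_i^*\|^{2d_1-1}\bigr)$ in $\phi\in\Phi$.

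\textbf{Rescaling and contradiction.} The key quantitative step is the two-sided comparison $W_{2d_1-1}^{2d_1-1}(G_n,H_n)\asymp s_n:=\max_{i,\alpha}|B_{i\alpha}^{(n)}|$, with constants depending only on $G_0$: the upper bound follows from a coupling matching atoms within the same Voronoi cell, and the lower bound from a combinatorial / polynomial-identifiability argument inside each cluster. Since $R_n(\phi)=o(s_n)$, dividing the Taylor identity by $s_n$ and extracting a subsequence (the normalized $\overline B_{i\alpha}^{(n)}:=B_{i\alpha}^{(n)}/s_n$ are bounded) yields limits $a_{i\alpha}$ with $\max|a_{i\alpha}|=1$ and $\sum_i a_{i\bm{0}}=0$; the ratio hypothesis then forces
\[
\sum_{i=1}^{k_0}\sum_{|\alpha|\le 2d_1-1}a_{i\alpha}\,D^\alpha\phi(\theta_i^*) \;=\; 0 \quad \forall\,\phi\in\Phi.
\]
Applying the linear independent domain hypothesis to this configuration yields $a_{i\alpha}=0$ for all $i,\alpha$, contradicting $\max|a_{i\alpha}|=1$.

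\textbf{Main obstacle.} The delicate point is that the naive configuration $(\ell,m_1,\dots,m_{k_0})=(k_0,2d_1,\dots,2d_1)$ has $\sum m_i=2k_0(k-k_0+1)>2k$ outside the edge cases $k_0\in\{1,k\}$, so it lies outside the admissible range of Definition~\ref{def:linearindependentdomain}. The resolution is to further resolve each cluster: atoms of $G_n\cup H_n$ in a given cluster generically separate at multiple scales, so one iteratively rescales --- producing $\ell\in[k_0,2k-k_0]$ distinct limit points with multiplicities $m_i\in[1,2d_1]$ summing into $[2k_0,2k]$ --- and performs a Taylor expansion at the appropriate scale at each stage. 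The bookkeeping is driven by the invariant that the total atom count in $G_n\cup H_n$ is at most $2k$; this is exactly what carves out the admissible configurations in Definition~\ref{def:linearindependentdomain} and guarantees the limiting linear relation always lies in the admissible regime, so the hypothesis applies and the contradiction closes. This iterative rescaling is the technical heart of the argument and the principal source of its intricacy.
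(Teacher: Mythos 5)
Your proposal follows essentially the same route as the proof this survey is reporting (namely \cite[Theorem~2.21]{wei2023minimum}, building on \cite[Theorem~6.3]{heinrich2018strong}): proof by contradiction, Voronoi clustering of the atoms of $G_n,H_n$ around those of $G_0$, a local comparison between $W_{2d_1-1}^{2d_1-1}$ and the vector of within-cluster moment discrepancies, Taylor expansion and normalization to extract a nontrivial linear relation among the operators $D^\alpha|_{\theta=\theta_i}$, and a multi-scale (iterative rescaling) refinement whose atom-counting invariant is exactly what produces the admissible configurations $(\ell,m_1,\ldots,m_\ell)$ of Definition~\ref{def:linearindependentdomain}; your reduction of part (b) to part (a) via monotonicity of the linear-independent-domain property is also exactly the paper's argument. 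You correctly identify that the single-scale version (expansion about the fixed $\theta_i^*$, with $R_n(\phi)=o(s_n)$ and $W_{2d_1-1}^{2d_1-1}\asymp s_n$) does not suffice outside the edge cases $k_0\in\{1,k\}$ and that the iterative rescaling is where all the work lies, though that step is only described, not executed, in your write-up.
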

We note that compactness assumption is an important condition for the theorem's claims to hold. More discussion on this will be given in the sequel.
By Remark \ref{rem:linearinddomainconstraint} if $\myF$ is a $(2k-1,1,k)$ linear independent domain this implies that $\myF$ is a $(2d_1-1,k_0,k)$ linear independent domain for any $k_0\in [k]$, hence in Theorem \ref{thm:inversebound} part \ref{thm:inverseboundb} immediately follows from part \ref{thm:inversebounda}.
It is also noted that
in \cite[Section~8.5]{wei2023minimum} that the exponent $2d_1-1$ of the denominator in \eqref{eqn:localinversebound} is optimal. 
Finally, the uniform global inverse bound \eqref{eqn:globalinversebound} also immediately follows in lieu of  Lemma \ref{lem:localtoglobal}. 
\begin{rem}
Notice that $\sup_{\myf\in \myF} |G \myf-H \myf| = \sup_{\myf\in \myF\cup \{1_\Theta\}} |G \myf-H \myf| $ since $G 1_\Theta - H 1_\Theta = 0$. So we may always assume that $1_\Theta\in \myF$ without affecting \eqref{eqn:localinversebound}. For $\myf=1_\Theta$, the corresponding $t_\myf=1_\Xf$. Hence the minimum $\myF$-distance estimator $\hat{G}_n$ also remains unchanged by replacing $\myF$ with $\myF\cup \{1_\Theta\}$. See Remark \ref{rem:linearinddomainconstraint} for a related discussion.
\myeoe
\end{rem}


The special case that $\myF=\myF_0$ (cf. KS distance in Example \ref{exa:divationbetweenmixturedistributions}) of Theorem \ref{thm:inversebound} under $m$-strong identifiability was first obtained in \cite[Theorem 6.3]{heinrich2018strong}; while the general version of inverse bounds as presented above was given by \cite[Theorem 2.21]{wei2023minimum}.


\subsection{Example: moment based estimators}
\label{sec:momentdeviation}

In this section we consider the monomial family $\myF_2:=\{(\theta-\theta_0)^\alpha\}_{\alpha\in \Ic_{2k-1}}$ in Example~\ref{exa:moment}, where $\theta_0$ is an arbitrarily chosen element in $\Rb^q$. 
Since discrete distributions with $k$ support points are uniquely characterized by their first $2k-1$ moments, but not their first $2k-2$ moments by \cite[Lemma~8.1]{wei2023minimum}, 
we know that $\Gc_k(\Theta)$ is distinguishable by $\myF_2$. 

\vspace{.1in}
\noindent \underline{Inverse bounds}
\begin{comment}
We first show that $\myF_2$ is a $(2k-1,1,k)$ linear independent domain. It is obvious that each monomial in $\myF_2$ is $2k-1$ differentiable. Consider any integer $\ell\in [1,2k-1]$, and any vector $(m_1,m_2,\ldots,m_\ell)$ such that $1\leq m_i\leq 
2k$ for $i\in [\ell]$ and $\sum_{i=1}^\ell m_i\in [2, 2k]$. Consider any distinct $\{\theta_i\}_{i\in [\ell] }\subset \Theta$. The equations \eqref{eqn:linearinddomaina} \eqref{eqn:linearinddomainb} become
\begin{align}
\sum_{i=1}^{\ell}\  \sum_{|\gamma|\leq m_i-1} a_{i\gamma} \frac{\alpha!}{(\alpha-\gamma)!}  (\theta_i-\theta_0)^{\alpha-\gamma}1_{\alpha\geq \gamma}  = &0,  \quad \alpha \in \Ic_{2k-1}. 
\end{align}
It then follows from \cite[Lemma~8.8]{wei2023minimum} 
that $a_{i\gamma}=0$ for any $\gamma\in \Ic_{m_i-1}, i\in [\ell]$. So $\myF_2$ is a $(2k-1,1,k)$ linear independent domain. (Note that it is straightforward to see that $\myF_2$ is not $(2k-1)$-strongly identifiable.) 
Provided that $\Theta$ is compact, then
%
we may apply Theorem \ref{thm:inversebound}, which yields that 
\end{comment}
%
It is not difficult to verify that $\myF_2$ is not $(2k-1)$-strongly identifiable. 
The next lemma shows that $\myF_2$ is a $(2k-1,1,k)$ linear independent domain.

%

\begin{comment}
$\myF_2:=\{(\theta-\theta_0)^\alpha\}_{\alpha\in \Ic_{2k-1}}$ but not $\myF_1$ unless $d_1=k$ or equivalently $k_0=1$. Therefore we will work on the larger family $\myF_2$. Note that since $\myF_2\supset \myF_2$, \eqref{eqn:localinversebound} holds for $\myF=\myF_2$,  for any $G_0\in \Gc_k(\Theta)$, and is as below: 
\begin{equation*}
	\liminf_{ \substack{G,H\overset{W_1}{\to} G_0\\ G\neq H\in \Gc_k(\Theta) }} \frac{\|\mbf_{2k-1}(G-\theta_0)-\mbf_{2k-1}(H-\theta_0)\|_{\infty}}{W_{2d_1-1}^{2d_1-1}(G,H)} >0. 
\end{equation*}
\end{comment}

\begin{lem}
\label{lem:momentbound}
The family $\myF_2$ is a $(2k-1,1,k)$ linear independent domain, and $\Gc_k(\Theta)$ is distinguishable by $\myF_2$. 
\end{lem}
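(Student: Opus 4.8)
Let me think about what needs to be proven. We have $\myF_2 = \{(\theta-\theta_0)^\alpha\}_{\alpha \in \Ic_{2k-1}}$, the monomials (with shifted origin $\theta_0$) of total degree at most $2k-1$ on $\Theta \subset \Rb^q$. We need two things:
1. $\Gc_k(\Theta)$ is distinguishable by $\myF_2$ — but this is already stated in the paragraph preceding the lemma (discrete distributions with $k$ support points are uniquely characterized by first $2k-1$ moments).
2. $\myF_2$ is a $(2k-1, 1, k)$ linear independent domain.

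For part 2, unwind the definition (Definition \ref{def:linearindependentdomain}). We fix $\ell \in [1, 2k-1]$ (since $k_0=1$, range is $[k_0, 2k-k_0] = [1, 2k-1]$), a vector $(m_1, \ldots, m_\ell)$ with $1 \le m_i \le 2k$ and $\sum m_i \in [2, 2k]$, and distinct $\theta_1, \ldots, \theta_\ell \in \Theta$. We need: if for all $\phi \in \myF_2$,
$$\sum_{i=1}^\ell \sum_{|\gamma| \le m_i - 1} a_{i\gamma} D^\gamma \phi(\theta_i) = 0$$
(plus the constraint $\sum_i a_{i\bm 0} = 0$), then all $a_{i\gamma} = 0$.

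Since $\phi$ ranges over all monomials $(\theta-\theta_0)^\alpha$ with $|\alpha| \le 2k-1$, and $D^\gamma (\theta - \theta_0)^\alpha = \frac{\alpha!}{(\alpha-\gamma)!}(\theta-\theta_0)^{\alpha-\gamma}\mathbf{1}_{\alpha \ge \gamma}$, the condition says that a certain polynomial in the $\theta_i$'s vanishes when we extract the coefficient structure. The cleanest route is to observe that testing against all $\phi \in \myF_2$ is equivalent to the statement that the signed measure
$$\mu := \sum_{i=1}^\ell \sum_{|\gamma| \le m_i - 1} a_{i\gamma} (-1)^{|\gamma|} D^\gamma \delta_{\theta_i}$$
(a "generalized finite signed measure" — a linear combination of Dirac masses and their derivatives, supported on $\ell$ points with orders of contact $m_i$) annihilates all polynomials of degree $\le 2k-1$. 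So $\mu$ has vanishing moments up to order $2k-1$. The total "mass count" of such a $\mu$ is $\sum_{i=1}^\ell m_i \binom{\text{multi-index count}}{}$... actually one should count: the number of free coefficients is $\sum_{i=1}^\ell |\{\gamma : |\gamma| \le m_i-1\}|$. The hard part is that in several variables ($q \ge 1$) this counting and the vanishing-moment argument is delicate; in one variable ($q=1$) it is exactly the classical fact that a combination of $\ell$ Diracs-with-derivatives (confluent Vandermonde) of total order $\sum m_i \le 2k \le 2k-1+1$ that kills all polynomials of degree $\le 2k-1$ must be trivial, by nonsingularity of the confluent Vandermonde matrix. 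I would therefore reduce to the one-dimensional lemma already available in the excerpt: the paper points to \cite[Lemma~8.8]{wei2023minimum} for precisely the statement that the relevant matrix (whose entries are $\frac{\alpha!}{(\alpha-\gamma)!}(\theta_i-\theta_0)^{\alpha-\gamma}\mathbf{1}_{\alpha \ge \gamma}$) is of full rank. So the plan is: (i) write out the equations explicitly after substituting $\phi = (\theta-\theta_0)^\alpha$; (ii) recognize the coefficient matrix as a (generalized multivariate confluent Vandermonde) matrix; (iii) invoke \cite[Lemma~8.8]{wei2023minimum} to conclude it has trivial kernel, forcing all $a_{i\gamma}=0$.

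The remaining piece — that $\myF_2$ is \emph{not} $(2k-1)$-strongly identifiable — is the easy direction and can be dispatched with a dimension-counting counterexample: $m$-strong identifiability with $m = 2k-1$ demands linear independence of the operators $\{D^\alpha|_{\theta_i}\}_{|\alpha| \le 2k-1, i \in [\ell]}$ for \emph{every} $\ell$, in particular for $\ell$ large; but $\myF_2$ spans only a $|\Ic_{2k-1}|$-dimensional space of polynomials, so once $\ell \cdot |\Ic_{2k-1}| > |\Ic_{2k-1}|$, i.e. $\ell \ge 2$, there must be a nontrivial linear dependence among these operators restricted to $\myF_2$ (since $D^\alpha \phi \equiv 0$ for $\phi \in \myF_2$ whenever $|\alpha| = 2k-1$ and $\phi$ has degree $< 2k-1$, already there are forced relations). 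This shows the necessity of passing to the weaker $(m,k_0,k)$ linear independent domain notion, which is precisely the point the surrounding text is making.

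The main obstacle I anticipate is bookkeeping in the multivariate ($q > 1$) case: making the reduction to the cited confluent-Vandermonde nonsingularity result clean when the multi-index structure is genuinely multidimensional, and verifying that the constraint $\sum m_i \le 2k$ in the definition of linear independent domain is exactly what guarantees the number of moment conditions (degrees $0$ through $2k-1$) suffices to pin down the coefficients. I expect this is handled in the excerpt's reference \cite[Lemma~8.8]{wei2023minimum}, so the write-up would lean on that lemma rather than re-deriving it; the honest work here is the translation of Definition \ref{def:linearindependentdomain} into the matrix form that \cite[Lemma~8.8]{wei2023minimum} addresses.
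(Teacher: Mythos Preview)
Your proposal is correct and follows essentially the same route as the paper's own argument: compute $D^\gamma(\theta-\theta_0)^\alpha = \frac{\alpha!}{(\alpha-\gamma)!}(\theta_i-\theta_0)^{\alpha-\gamma}\mathbf{1}_{\alpha\ge\gamma}$, substitute into the linear-independence equations of Definition~\ref{def:linearindependentdomain}, and invoke \cite[Lemma~8.8]{wei2023minimum} to conclude that all coefficients vanish; distinguishability is handled by the moment-characterization fact already cited. Your additional signed-measure interpretation and the dimension-counting remark on why $\myF_2$ fails $(2k-1)$-strong identifiability are nice glosses but not part of the lemma itself.
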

If additionally $\Theta\subset \Rb^q$ is  compact, then by Theorem \ref{thm:inversebound} and Lemma \ref{lem:localtoglobal}, we obtain both local and global inverse bound. 
The local inverse bound
\eqref{eqn:localinversebound} with $\myF=\myF_2$ takes the form: 
\begin{equation}
\liminf_{ \substack{G,H\overset{W_1}{\to} G_0\\ G\neq H\in \Gc_k(\Theta) }} \frac{\|\mbf_{2k-1}(G-\theta_0)-\mbf_{2k-1}(H-\theta_0)\|_{\infty}}{W_{2d_1-1}^{2d_1-1}(G,H)} >0. \label{eqn:localmomentbound}
\end{equation}
The global inverse bound
\eqref{eqn:globalinversebound} with $\myF=\myF_2$ 
takes the form:
\begin{equation}
	\inf_{  G\neq H\in \Gc_k(\Theta) } \frac{\|\mbf_{2k-1}(G-\theta_0)-\mbf_{2k-1}(H-\theta_0)\|_{\infty}}{W_{2k-1}^{2k-1}(G,H)} >0. \label{eqn:globalmomentbound}
\end{equation}

The univariate case $q=1$ of \eqref{eqn:globalmomentbound} was first established by \cite[Proposition 1]{wu2020optimal}. 
 \myred{ The \eqref{eqn:globalmomentbound} for general $q$ was implied by the Theorem 4.2 and Equation (4.49) in \cite{doss2020optimal}.  It is worth mentioning that both previous bounds specify the dependence on the parameters $k$ and $q$. 
 Lemma \ref{lem:momentbound} is from  \cite[Lemma 3.16]{wei2023minimum}, which is obtained by specializing Theorem \ref{thm:inversebound} 
 for $\myF=\myF_2$ and the linear independence of multinomial (\cite[Lemma~8.8]{wei2023minimum}). 
 }

\vspace{.1in}
\noindent \underline{Minimum $\Phi_2$-distance estimators}
So far the discussion only concerns properties about discrete distributions in $\Gc_k(\Theta)$ and does not involve mixture models or the probability kernel $\{f(\cdot|\theta)\}_{\theta\in\Theta}$. As described in the previous section, to ensure that $\myF_2$ is estimatable, it is required that for each $\myf = \left(\theta-\theta_0\right)^\alpha\in \myF_2$, where $\alpha\in \Ic_{2k-1}$, there exists a function $t_\alpha$ defined on $\Xf$ such that 
\begin{equation}
G \myf= m_\alpha(G-\theta_0) =\Eb_{G}t_\alpha(X), \quad \forall G\in \Gc_k(\Theta). \label{eqn:momenttj}
\end{equation}  
A minimum $\myF$-distance estimator in this case becomes 
\begin{equation}
\hat{G}_n\in \argmin_{G'\in \Gc_k(\Theta)} \sup_{\alpha\in \Ic_{2k-1}}\left|m_\alpha(G'-\theta_0) -\frac{1}{n}\sum_{i\in [n]}t_\alpha(X_i)\right|.  \label{eqn:momentmethod}
\end{equation}
This is the \emph{generalized method of moments} (GMM).
%
The vector $\left(\frac{1}{n}\sum_{i\in [n]}t_\alpha(X_i)\right)_{\alpha\in \Ic_{2k-1}}$
is an empirical estimate of $\mbf_{2k-1}(G-\theta_0)$, and might not lie in a valid moment space for a discrete distribution due to the randomness, but the parameter estimate may be obtained by finding the closest corresponding moment vector w.r.t. $\|\cdot\|_\infty$. Specializing 
the GMM estimator in \eqref{eqn:momentmethod} when the probability kernel $f(\cdot|\theta)$ is a univariate Gaussian distribution, we recover the "denoised method of moments" algorithm that was investigated by \cite{wu2020optimal}. The generalized method of moments as given by \eqref{eqn:momentmethod} can be applied to other types of kernels $f$ and was formulated by \cite{wei2023minimum}.  


\begin{comment}
\begin{algorithm}
\caption{Generalized method of moments} \label{alg:moments}
\KwData{$X_1,\ldots, X_n\overset{\iidtext}{\sim} \Pb_{G^*}$}
\textbf{Parameter}: $\theta_0$ \\
\KwResult{$\hat{G}_n$}
$\bar{t}_\alpha(\theta_0) \gets \frac{1}{n}\sum_{i\in [n]}t_\alpha(X_i)$, for $\alpha\in \Ic_{2k-1}$\;
$\hat{G}_n\in \argmin_{G'\in \Gc_k(\Theta)} \| \mbf_{2k-1}(G'-\theta_0) - \bar{\tbf}  \|_\infty$, where $\bar{\tbf}=(\bar{t}_\alpha(\theta_0))_{\alpha\in \Ic_{2k-1}}$.
\end{algorithm}
\end{comment}



We now state Theorem \ref{thm:convergencerate} specialized to the GMM estimators, combined with Lemma \ref{lem:momentbound}. 

\begin{thm}
\label{thm:convergencerategeneralmoment}
	Suppose that $\Theta$ is compact. Suppose that for each $\alpha \in \Ic_{2k-1}$, there exists a real-valued function $t_\alpha$ defined on $\Xf$ such that \eqref{eqn:momenttj} holds. Let $\hat{G}_n$ be the GMM estimator from \eqref{eqn:momentmethod}.
 \begin{enumerate}[label=(\alph*)] 
\item \label{itema:thm:convergencerategeneralmoment}
Then there exists  $C$, where its dependence on $ \Theta, k$ and the probability kernel $f$ is suppressed, such that for any $G^*\in \Gc_k(\Theta)$, 
\begin{comment}
\begin{align*} 
\numberthis \label{eqn:convergenceratemomentgeneral}
	& \Pb_{G^*} \left(W_{2k-1}^{2k-1}(G^*,\hat{G}_n)\geq t \right) \\
	\leq &  \Pb_{G^*} \left( \sup_{\alpha\in \Ic_{2k-1}}\left| \frac{1}{n}\sum_{i\in [n]}t_\alpha(X_i)-m_\alpha(G^*-\theta_0)\right| \geq C  t  \right). 
\end{align*}
and
\end{comment}
\begin{align*}
	& \Eb_{G^*} W_{2k-1}^{2k-1}(G^*,\hat{G}_n) \\
	\leq & C \Eb_{G^*}  \sup_{\alpha\in \Ic_{2k-1}}\left| \frac{1}{n}\sum_{i\in [n]}t_\alpha(X_i)-m_\alpha({G^*}-\theta_0)\right| . 
\end{align*}
 
\item \label{itemb:thm:convergencerategeneralmoment}
Fix $G_0\in \Gc_k(\Theta)$ such that $G_0$ has exactly $k_0$ atoms.  Then there exists $r(G_0)$, $C(G_0)$ and $c(G_0)$, where their dependence on $ \Theta, k_0,k$ and the probability kernel $f$ is suppressed, such that for any ${G^*} \in \Gc_k(\Theta)$ satisfying $W_1(G_0,{G^*})<r(G_0)$,
\begin{comment}
\begin{align*} 
\numberthis \label{eqn:convergenceratemomentlocalgeneral}
& \Pb_{G^*} \left(D({G^*},\hat{G}_n)\geq t \right) \\
\leq  & \Pb_{G^*} \left( \sup_{\alpha\in \Ic_{2k-1}}\left| \frac{1}{n}\sum_{i\in [n]}t_\alpha(X_i)-m_\alpha({G^*}-\theta_0)\right| \geq C(G_0)  t  \right)
\end{align*}
and
\end{comment}
\begin{align*}
& \Eb_{G^*} W_{2d_1-1}^{2d_1-1}(\hat{G}_n,{G^*})  \\
\leq &  C(G_0)  \Eb_{G^*} \sup_{\alpha\in \Ic_{2k-1}}\left| \frac{1}{n}\sum_{i\in [n]}t_\alpha(X_i)-m_\alpha({G^*}-\theta_0)\right|. 
\end{align*}
\end{enumerate}
	\end{thm}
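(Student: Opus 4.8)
The plan is to obtain Theorem~\ref{thm:convergencerategeneralmoment} as a direct specialization of the general M-estimation bound in Theorem~\ref{thm:convergencerate} to the monomial test class $\Phi_2=\{(\theta-\theta_0)^\alpha\}_{\alpha\in\Ic_{2k-1}}$ of Example~\ref{exa:moment}, feeding in the inverse bounds \eqref{eqn:localmomentbound} and \eqref{eqn:globalmomentbound} that were already recorded for this class (themselves consequences of Lemma~\ref{lem:momentbound}, Theorem~\ref{thm:inversebound} and Lemma~\ref{lem:localtoglobal}).

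First I would set up the dictionary between the concrete and the abstract objects. Under the bijection $\phi=(\theta-\theta_0)^\alpha\leftrightarrow\alpha\in\Ic_{2k-1}$ one has $G\phi=m_\alpha(G-\theta_0)$; the hypothesis \eqref{eqn:momenttj} is verbatim the statement that $t_\phi:=t_\alpha$ is an unbiased estimate of $G\phi$ for every $G\in\Gc_k(\Theta)$, i.e.\ $\Phi_2$ is estimatable on $\Gc_k(\Theta)$ in the sense of Definition~\ref{def:estimatable}; and the objective minimized in \eqref{eqn:momentmethod} is precisely $\sup_{\phi\in\Phi_2}\bigl|G'\phi-\tfrac{1}{n}\sum_{i}t_\phi(X_i)\bigr|$, so $\hat{G}_n$ is the minimum $\Phi_2$-distance estimator. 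Consequently the empirical-process term on the right-hand side of Theorem~\ref{thm:convergencerate} reads $\Eb_{G^*}\sup_{\phi\in\Phi_2}\bigl|\tfrac{1}{n}\sum_i t_\phi(X_i)-G^*\phi\bigr|=\Eb_{G^*}\sup_{\alpha\in\Ic_{2k-1}}\bigl|\tfrac{1}{n}\sum_i t_\alpha(X_i)-m_\alpha(G^*-\theta_0)\bigr|$. The two remaining structural hypotheses of Theorem~\ref{thm:convergencerate} are compactness of $\Theta$ (assumed) and distinguishability of $\Gc_k(\Theta)$ by $\Phi_2$ (the second assertion of Lemma~\ref{lem:momentbound}).

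For part (a), Lemma~\ref{lem:momentbound} gives that $\Phi_2$ is a $(2k-1,1,k)$ linear independent domain, so by Theorem~\ref{thm:inversebound}\ref{thm:inverseboundb} and Lemma~\ref{lem:localtoglobal} the global inverse bound \eqref{eqn:globalinversebound} — equivalently \eqref{eqn:globalmomentbound} — holds for $(\Gc_k,\Phi_2)$. Applying part (a) of Theorem~\ref{thm:convergencerate} with $\Phi=\Phi_2$ then yields the asserted expectation inequality with $C=C(\Phi_2,\Theta,k)$; since $\Phi_2$ is determined by $\theta_0$ and $k$, this $C$ depends only on $\Theta$, $k$ and (through the $t_\alpha$'s) the kernel $f$, not on $G^*$. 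For part (b), fix $G_0$ with exactly $k_0\le k$ atoms and set $d_1=k-k_0+1$; Theorem~\ref{thm:inversebound}\ref{thm:inverseboundb} supplies the local inverse bound \eqref{eqn:localinversebound} — equivalently \eqref{eqn:localmomentbound} — for the triplet $(\Gc_k,\Phi_2,G_0)$. Together with the distinguishability above, this verifies the hypotheses of part (b) of Theorem~\ref{thm:convergencerate}, whose expectation bound, read with $\Phi=\Phi_2$, is exactly the stated inequality, with neighborhood radius $r(G_0)$ and constant $C(G_0)$ (and the auxiliary $c(G_0)$) inherited from there.

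The main point to watch is bookkeeping rather than any genuine obstacle: all of the analytic substance — the argument behind Theorem~\ref{thm:inversebound}, the linear-independence computation for monomials in Lemma~\ref{lem:momentbound}, and the triangle-inequality reduction of $\|\hat{G}_n-G^*\|_{\Phi_2}$ to a suprema-of-empirical-process quantity carried out inside Theorem~\ref{thm:convergencerate} — is already packaged in earlier results. One only needs to check that the index set $\Ic_{2k-1}$, the monomials $(\theta-\theta_0)^\alpha$, the unbiased statistics $t_\alpha$, and the overfit index $d_1$ are matched correctly to the abstract $\Phi$, $t_\phi$, $d_1$ of Theorem~\ref{thm:convergencerate}, and that \eqref{eqn:momenttj} is precisely the estimatability hypothesis. (The tail-probability bounds also available from Theorem~\ref{thm:convergencerate} are not needed for the expectation statements asserted here.)
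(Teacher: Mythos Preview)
Your proposal is correct and matches the paper's approach exactly: the paper introduces Theorem~\ref{thm:convergencerategeneralmoment} with the sentence ``We now state Theorem~\ref{thm:convergencerate} specialized to the GMM estimators, combined with Lemma~\ref{lem:momentbound},'' and gives no further argument. Your dictionary between $\Phi_2$, $t_\alpha$, and the abstract $\Phi$, $t_\phi$, together with the invocation of Lemma~\ref{lem:momentbound}, Theorem~\ref{thm:inversebound}, and Lemma~\ref{lem:localtoglobal} to supply the required inverse bounds, is precisely the intended specialization.
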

The main assumption in Theorem~\ref{thm:convergencerategeneralmoment} is that $\myF_2$ is estimatable. For the univariate case $d=q=1$, it is shown that, when the probability kernel $\{f(\cdot|\theta)\}$ belongs to the \emph{the natural exponential families with quadratic variance functions} (NEF-QVF) \cite{morris1982natural,morris1983natural}, $\myF_2$ is estimatable \cite{lindsay1989moment,wei2023minimum}. For GMM estimators in Gaussian mixture models, the convergence rate in the univariate case is established in 
\cite{wu2020optimal} , while the multivariate case is obtained in \cite{pereira2022tensor,doss2020optimal,wei2023minimum}. 

\subsection{Sufficient conditions for pointwise inverse bounds}
\label{sec:strongpointwise}
Next, we turn to sufficient conditions to establish pointwise inverse bounds \eqref{eqn:localinverseboundfixargument} and \eqref{eqn:localinverseboundfixargumentexactfitted}.  As discussed in Section \ref{sec:finite}, such inverse bounds are useful for establishing pointwise rates of convergence for a variety of estimators. Such pointwise inverse bounds can be established under a suitable strong identifiability conditions that are considerably less stringent than those required for establishing the uniform inverse bounds.

\begin{definition} \label{def:2ndfixoneargument}
Fix $G_0= \sum_{i=1}^{k_0}p_i^0\delta_{\theta_i^0}$, a discrete probability measure that has exactly $k_0 \leq k$ atoms in $\Theta$.
A family $\myF$ is said to be a \textit{$(G_0,k)$ second-order linear independent domain} forif the following hold: 1) Each $\myf\in \myF$ is second-order continuously differentiable at $\theta_i^0$ for each $i\in [k_0]$; and  2) Consider any integer $\ell_1\in [k_0]$, and $\ell\in [k_0,k]$. Set $m_i=2$ for $i\in [\ell_1]$, $m_i=1$ for $\ell_1<i\leq k_0$ and $m_i=0$ for $k_0<i\leq \ell$. For any distinct $\{\theta_i^0\}_{i=k_0+1}^{\ell}\subset \Theta \setminus \{\theta_i^0\}_{i\in [k_0]}$, the  operators $\{D^\alpha|_{\theta=\theta_i^0}\}_{ 0\leq |\alpha|\leq m_i, i\in [\ell]}$  on $\myF$ are linearly independent, i.e.,  
\begin{comment}
\begin{subequations}
\begin{align}
\sum_{i=1}^{\ell_1}\  \sum_{|\alpha|\leq 2} a_{i\alpha} D^\alpha \myf(\theta_i) + \sum_{i=\ell_1+1}^{k_0}\  \sum_{|\alpha|\leq 1} a_{i\alpha} D^\alpha \myf(\theta_i) +  \sum_{i=k_0+1}^{\ell}\ a_{i\bm{0}}  \myf(\theta_i) = &0,  \quad \forall \myf\in \myF \label{eqn:linearinddomainafixed}\\
\sum_{i\in [\ell]}   a_{i\bm{0}}   = &0, \label{eqn:linearinddomainbfixed}
\end{align}
\end{subequations}
\end{comment}
\begin{subequations}
\begin{align}
\sum_{i=1}^{\ell}\  \sum_{|\alpha|\leq m_i} a_{i\alpha} D^\alpha \myf(\theta_i^0)  = &0,  \quad \forall \myf\in \myF \label{eqn:linearinddomainafixed}\\
\sum_{i\in [\ell]}   a_{i\bm{0}}   = &0, \label{eqn:linearinddomainbfixed}
\end{align}
\end{subequations}
if and only if 
$$
a_{i\alpha}=0, \quad \forall \ 0\leq |\alpha|\leq m_i, \ i\in [\ell].
$$
\end{definition}

It is clear that $\myF$ is $m$-strongly identifiable for $m=2$ implies that $\myF$ is a $(G_0,k)$ second-order linear independent domain for any $k\geq 1$ and $G_0\in \Ec_{k_0}(\Theta)$ with $k_0\in [k]$. The following is the pointwise counterpart to the uniform version in Theorem \ref{thm:inversebound}:

\begin{comment}
\begin{rem}
In principle, one can also have a stronger inverse bound (and upper bound) in terms of moment difference when $G_0$ is fixed, in the spirit of Theorem \ref{thm:inverseboundmoment}. The proof should be similar and we leave the details to interested readers. \myeoe
\end{rem}
\end{comment}

\begin{thm}
\label{lem:inverseboundfixoneargument}
Fix $G_0 \in \Gc_{k}(\Theta)$ such that $G_0$ has exactly $k_0 \leq k$ atoms. Suppose that $\myF$ is a $(G_0,k)$ second-order linear independent domain and that $\Theta\subset \Rb^q$ is compact. Then the pointwise inverse bound \eqref{eqn:localinverseboundfixargument} holds.
\end{thm}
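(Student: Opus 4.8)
The plan is to argue by contradiction, adapting the compactness-plus-Taylor-expansion template behind the uniform inverse bound (Theorem~\ref{thm:inversebound}) to the setting where only one argument varies, which is exactly why the relevant exponent drops to $2$. Suppose the conclusion fails, so the $\liminf$ in \eqref{eqn:localinverseboundfixargument} equals $0$: there is a sequence $G_n\in\Gc_k(\Theta)$ with $G_n\neq G_0$, $G_n\overset{W_1}{\to}G_0$, and $\|G_n-G_0\|_\Phi/W_2^2(G_n,G_0)\to 0$. Since $\Theta$ is compact and each $G_n$ has at most $k$ atoms, after passing to a subsequence we may assume the number of atoms of $G_n$ is constant and that all atom locations and masses converge. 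Weak convergence $G_n\to G_0$ then forces the atoms of $G_n$ to split into $k_0$ groups, the $i$-th clustering at $\theta_i^0$ with total mass tending to $p_i^0$, together with a bounded number of ``escaping'' atoms whose masses tend to $0$ and whose limit points $\tau_1^\ast,\dots,\tau_{\ell-k_0}^\ast$ lie in $\Theta\setminus\supp(G_0)$; counting atoms gives $\ell\le k$. This is precisely the configuration appearing in Definition~\ref{def:2ndfixoneargument}, with the choice $\ell_1=k_0$ (so $m_i=2$ at each $\theta_i^0$ and $m_i=0$ at each $\tau_j^\ast$).

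Next, for each $\phi\in\Phi$, expand $\phi$ to second order about each $\theta_i^0$ and to zeroth order about each $\tau_j^\ast$ and collect terms by differential operator, obtaining $G_n\phi-G_0\phi=\sum_{i=1}^{k_0}\sum_{|\alpha|\le 2} c_{n,i\alpha}\,D^\alpha\phi(\theta_i^0)+\sum_j e_{n,j}\,\phi(\tau_j^\ast)+R_n(\phi)$, where $c_{n,i\bm 0}$ is the mass defect of the $i$-th group, the coefficients $c_{n,i\alpha}$ with $|\alpha|\in\{1,2\}$ are its first- and second-order moment-shift coefficients, $e_{n,j}$ is the escaping mass near $\tau_j^\ast$, and $\sum_i c_{n,i\bm 0}+\sum_j e_{n,j}=0$ because both measures have unit mass. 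Let $M_n$ be the maximum of the absolute values of all these coefficients; then $M_n>0$ and $M_n\to 0$. Two facts drive the argument. First, $W_2^2(G_n,G_0)\le C M_n$: the coupling that transports each group's mass to $\theta_i^0$ costs $\lesssim\max_{|\alpha|=2}|c_{n,i\alpha}|$ (a quadratic cost, matching $W_2$), and carrying the residual mass imbalance across $\Theta$ costs $\lesssim\max_i|c_{n,i\bm 0}|+\max_j|e_{n,j}|$. Second, for any fixed $\phi$ that is $C^2$ at the $\theta_i^0$, the remainder is negligible on this scale, since the local Taylor error contributes $\sum p_{n,ij}\,o(\|\theta_{n,ij}-\theta_i^0\|^2)=o\!\big(\sum p_{n,ij}\|\theta_{n,ij}-\theta_i^0\|^2\big)=o(M_n)$.

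To conclude, divide all coefficients by $M_n$ and pass to a further subsequence so that $c_{n,i\alpha}/M_n\to\bar c_{i\alpha}$ and $e_{n,j}/M_n\to\bar e_j$, with $(\bar c,\bar e)\neq 0$ and $\sum_i\bar c_{i\bm 0}+\sum_j\bar e_j=0$. The functional $L(\phi):=\sum_{i,|\alpha|\le 2}\bar c_{i\alpha}D^\alpha\phi(\theta_i^0)+\sum_j\bar e_j\phi(\tau_j^\ast)$ is then a nontrivial linear combination of exactly the operators featuring in Definition~\ref{def:2ndfixoneargument} (with $\ell_1=k_0$) and obeys the constraint \eqref{eqn:linearinddomainbfixed}; since $\Phi$ is a $(G_0,k)$ second-order linear independent domain, there exists $\phi^\ast\in\Phi$ with $L(\phi^\ast)\neq 0$. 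Then, for $n$ large, using the expansion and $R_n(\phi^\ast)=o(M_n)$, $\|G_n-G_0\|_\Phi\ge|G_n\phi^\ast-G_0\phi^\ast|\ge\tfrac12|L(\phi^\ast)|\,M_n\ge\tfrac{|L(\phi^\ast)|}{2C}\,W_2^2(G_n,G_0)$, contradicting $\|G_n-G_0\|_\Phi/W_2^2(G_n,G_0)\to 0$.

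I expect the main obstacle to be the bookkeeping around the escaping atoms together with the uniformity required in the remainder estimate: one must check that the decomposition genuinely matches Definition~\ref{def:2ndfixoneargument} in all boundary cases (escaping atoms sharing a limit point, a group whose mass vanishes, $k_0=k$), and the estimate $R_n(\phi^\ast)=o(M_n)$ needs a little care at the escaping limit points $\tau_j^\ast$ — harmless under the mild continuity of the members of $\Phi$ that holds in all the examples (e.g.\ $\Phi=\Phi_0$ with a continuous kernel), but worth stating. Verifying $W_2^2(G_n,G_0)\lesssim M_n$ with the quadratic local transport cost is routine, yet it is precisely the step that forces the exponent $2$ in \eqref{eqn:localinverseboundfixargument} rather than $1$.
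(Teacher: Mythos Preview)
The paper does not supply a proof of this statement; it attributes the general result to \cite{wei2023minimum} and points to \cite{chen1995optimal,nguyen2013convergence,ho2016strong} for earlier versions with specific $\Phi$. Your proposal is the standard compactness--Taylor-expansion--contradiction argument used throughout that line of work, and the outline is correct: the atom grouping forced by $W_1$-convergence, the normalization by the maximal coefficient $M_n$, the coupling bound $W_2^2(G_n,G_0)\lesssim M_n$, and the appeal to Definition~\ref{def:2ndfixoneargument} to produce a witness $\phi^*$ all go through as you describe.

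Two small comments. Your choice $\ell_1=k_0$ is valid since Definition~\ref{def:2ndfixoneargument} quantifies over all $\ell_1\in[k_0]$; the reason smaller $\ell_1$ appears there is precisely the single-atom-group case you list among the boundary issues --- when group $i$ contains a single atom, one has $|c_{n,i\alpha}|\lesssim M_n^2/p_{n,i}$ for $|\alpha|=2$, so the normalized second-order coefficients tend to $0$ automatically and only first-order terms survive at those atoms. Taking $\ell_1=k_0$ merely carries some zero limit coefficients into $L$, which is harmless. Your flag about continuity of $\phi^*$ at the escaping limits $\tau_j^*$ is the one genuine loose end: Definition~\ref{def:2ndfixoneargument} only imposes $C^2$ regularity at the $\theta_i^0$, so the step $\phi^*(\theta_{n,j}^{\mathrm{esc}})\to\phi^*(\tau_j^*)$ needs an extra continuity hypothesis, which in the cited sources is either assumed globally on $\Phi$ or automatic for the concrete classes $\Phi_0,\Phi_1,\Phi_2$ actually used.
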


The pointwise inverse bound \eqref{eqn:localinverseboundfixargument} with specific test function classes $\Phi$ that arise from the kernel $f$ was obtained in \cite{chen1995optimal,nguyen2013convergence,ho2016strong}. Theorem \ref{lem:inverseboundfixoneargument} is from \cite{wei2023minimum}. 

Next, we present sufficient conditions for the exact-fitted pointwise inverse bound \eqref{eqn:localinverseboundfixargumentexactfitted}. 

\begin{definition} \label{def:1stfixoneargument}
A family $\myF$ is said to be a \textit{$(G_0,k_0)$ first-order linear independent domain} for $G_0=\sum_{i=1}^{k_0}p_i^0\delta_{\theta_i^0}\\ 
\in \Ec_{k_0}(\Theta)$ if the following hold. 1) Each $\myf\in \myF$ is first-order continuously differentiable at $\theta_i^0$ for each $i\in [k_0]$.  2) The  operators $\{D^\alpha|_{\theta=\theta_i^0}\}_{ 0\leq |\alpha|\leq 1, i\in [k_0]}$  on $\myF$ are linearly independent, i.e.,  
\begin{subequations}
\begin{align}
\sum_{i=1}^{k_0}\  \sum_{|\alpha|\leq 1} a_{i\alpha} D^\alpha \myf(\theta_i^0)  = &0,  \quad \forall \myf\in \myF \label{eqn:linearinddomainafixed1st}\\
\sum_{i\in [k_0]}   a_{i\bm{0}}   = &0, \label{eqn:linearinddomainbfixed1st}
\end{align}
\end{subequations}
if and only if 
$$
a_{i\alpha}=0, \quad \forall \ 0\leq |\alpha|\leq 1, \ i\in [k_0].
$$
\end{definition}

It is clear that if $\myF$ is a $(2d_1-1,k_0,k)$ linear independent domain then $\myF$ is a $(G_0,k_0)$ first-order linear independent domain for any $G_0\in \Ec_{k_0}(\Theta)$. It also follows that $\myF$ is $m$-strongly identifiable for $m=1$ implies that $\myF$ is a $(G_0,k_0)$ first-order linear independent domain for any $G_0\in \Ec_{k_0}(\Theta)$ for any $k_0\geq 1$. Next is a result on the exactfitted pointwise inverse bound. 

\begin{lem}
\label{lem:inverseboundfixoneargumentexactfitted}
Fix $G_0=\sum_{i=1}^{k_0}p_i^0\delta_{\theta_i^0}$ which has exactly $k_0$ atoms. Suppose that $\myF$ is a $(G_0,k_0)$ first-order linear independent domain. Then \eqref{eqn:localinverseboundfixargumentexactfitted} holds.
\end{lem}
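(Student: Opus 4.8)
The plan is to argue by contradiction and reduce the claim to the linear-algebraic condition in Definition~\ref{def:1stfixoneargument}, via a normalization-and-compactness argument analogous to (but considerably simpler than) the proof of the uniform bound in Theorem~\ref{thm:inversebound}: here only a first-order Taylor expansion at the fixed atoms of $G_0$ is involved, which is why neither compactness of $\Theta$ nor higher-order expansions are needed.

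Suppose \eqref{eqn:localinverseboundfixargumentexactfitted} fails. Then there is a sequence $G_n\in\Gc_{k_0}(\Theta)$ with $G_n\neq G_0$, $G_n\overset{W_1}{\to}G_0$, and $\|G_n-G_0\|_{\Phi}/W_1(G_n,G_0)\to 0$. Since $W_1$-convergence implies weak convergence and $G_0=\sum_{i=1}^{k_0}p_i^0\delta_{\theta_i^0}$ has exactly $k_0$ atoms with $p_i^0>0$, a standard portmanteau argument shows that for all large $n$ the measure $G_n$ has exactly one atom $\theta_i^n$ in each of $k_0$ small disjoint balls around $\theta_1^0,\dots,\theta_{k_0}^0$ and no mass elsewhere, with $\theta_i^n\to\theta_i^0$ and $p_i^n:=G_n(\{\theta_i^n\})\to p_i^0$. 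Restricting henceforth to such $n$, write $G_n=\sum_{i=1}^{k_0}p_i^n\delta_{\theta_i^n}$ and set
\[
h_n:=\max_{i\in[k_0]}\bigl(|p_i^n-p_i^0|+\|\theta_i^n-\theta_i^0\|\bigr)>0 .
\]
An elementary transport plan — move $\min(p_i^n,p_i^0)$ mass from $\theta_i^n$ to $\theta_i^0$ and dispose of the residual mass, of total size $\tfrac12\sum_i|p_i^n-p_i^0|$, within the bounded region near $\{\theta_i^0\}$ — gives $W_1(G_n,G_0)\le C(G_0)\,h_n$. Combined with the contradiction hypothesis this yields $\|G_n-G_0\|_{\Phi}=o(h_n)$, hence $(G_n\phi-G_0\phi)/h_n\to 0$ for every fixed $\phi\in\Phi$.

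Next I would normalize. Put $a_i^n:=(p_i^n-p_i^0)/h_n$ and $b_i^n:=(\theta_i^n-\theta_i^0)/h_n\in\Rb^q$. By construction $\max_i\bigl(|a_i^n|+\|b_i^n\|\bigr)=1$, and $\sum_i a_i^n=0$ since $\sum_i p_i^n=\sum_i p_i^0=1$; these vectors lie in a compact set, so after passing to a subsequence $a_i^n\to a_i$, $b_i^n\to b_i$ with $\sum_i a_i=0$ and $\max_i(|a_i|+\|b_i\|)=1$. Fixing $\phi\in\Phi$, which is $C^1$ at each $\theta_i^0$, first-order Taylor expansion gives $\phi(\theta_i^n)=\phi(\theta_i^0)+\nabla\phi(\theta_i^0)\cdot(\theta_i^n-\theta_i^0)+o(\|\theta_i^n-\theta_i^0\|)$; writing $p_i^n\phi(\theta_i^n)-p_i^0\phi(\theta_i^0)=(p_i^n-p_i^0)\phi(\theta_i^0)+p_i^n\bigl(\phi(\theta_i^n)-\phi(\theta_i^0)\bigr)$, dividing by $h_n$ and summing over $i$ — the remainders vanish because $\phi$ is fixed and $\|\theta_i^n-\theta_i^0\|\le h_n$, so no uniformity in $\phi$ is required — we arrive at
\[
0=\lim_{n}\frac{G_n\phi-G_0\phi}{h_n}=\sum_{i=1}^{k_0}\Bigl(a_i\,\phi(\theta_i^0)+\nabla\phi(\theta_i^0)\cdot\bigl(p_i^0 b_i\bigr)\Bigr).
\]
Since this holds for every $\phi\in\Phi$ together with $\sum_i a_i=0$, the coefficients $a_{i\bm{0}}:=a_i$ and $a_{i\alpha}:=p_i^0\,(b_i)^{(\alpha)}$ for $|\alpha|=1$ satisfy both \eqref{eqn:linearinddomainafixed1st} and \eqref{eqn:linearinddomainbfixed1st}. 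Because $\Phi$ is a $(G_0,k_0)$ first-order linear independent domain, all these coefficients vanish; in particular $a_i=0$ for every $i$, and since $p_i^0>0$ also $b_i=0$ for every $i$, contradicting $\max_i(|a_i|+\|b_i\|)=1$. This completes the argument.

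The steps requiring real care are the first one — pinning down that $G_n$ must eventually adopt the exact $k_0$-atom configuration near $G_0$, with both atoms and weights converging to those of $G_0$ — and the bookkeeping in the normalization, which ensures the limiting linear relation is nondegenerate. Everything else is a routine first-order expansion; in contrast with the uniform bound of Theorem~\ref{thm:inversebound}, the argument uses no compactness of $\Theta$ and no order-$(2d_1-1)$ Taylor expansions, reflecting the fact that in the exact-fitted regime the atoms of competing mixing measures cannot collide.
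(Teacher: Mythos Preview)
Your argument is correct and follows the same contradiction--normalization--Taylor scheme that underlies all the inverse bounds in this paper (cf.\ the remarks after Theorem~\ref{thm:inversebound}); the paper itself does not reprove Lemma~\ref{lem:inverseboundfixoneargumentexactfitted} but defers to \cite[Lemma~4.7]{wei2023minimum}, where the same approach is used. Your observation that compactness of $\Theta$ is unnecessary here---because in the exact-fitted regime the atoms of $G_n$ are pinned near those of $G_0$ and cannot escape---is exactly the point, and is consistent with the fact that the statement of the lemma imposes no compactness hypothesis.
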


   The exact-fitted pointwise inverse bound \eqref{eqn:localinverseboundfixargumentexactfitted} with specific $\myF$ is obtained in \cite{ho2016strong,wei2022convergence}, while the current presented version with the general $\Phi$-distance in Lemma \ref{lem:inverseboundfixoneargumentexactfitted} is  from \cite[Lemma 4.7]{wei2023minimum}.

Finally, we make several remarks regarding the optimality of the inverse bounds which were obtained under the formulated strong identifiability conditions. As we have seen in Theorem \ref{thm:inversebound} and \ref{lem:inverseboundfixoneargument}, the strong identifiability conditions are expressed in terms of the test function class $\Phi$, as well as the number of atoms for the finite measure $G_0$, relatively to the dimensionality of its ambient space $\Gc_k(\Theta)$. Moreover, aside from the case $k=k_0$, $\Theta$ was assumed to be compact. In fact, the compactness of $\Theta$ was an essential ingredient in the proof-by-contradiction arguments employed in these theorems. 

It may be verified that the order $r$ in $W_r^r(G,G')$ which appeared in the inverse bounds \eqref{eqn:globalinversebound} \eqref{eqn:localinversebound} \eqref{eqn:localinverseboundfixargument} and \eqref{eqn:localinverseboundfixargumentexactfitted}, namely, $r=2k-1, r=2d_1-1, r=2$
and $r=1$, respectively, is the best one can establish under such suitably strong identifiability conditions. 
The first two cases can be found for the monomial class in \cite{wei2023minimum}  (Theorem 2.24); the last two cases may be justified via Theorem 3.2(b) and the remark following Theorem 3.1 of \cite{ho2016strong}.

   The boundedness of $\Theta$ is necessary in \eqref{eqn:globalinversebound} \eqref{eqn:localinversebound} and \eqref{eqn:localinversebound}, if one is to retain the aforementioned order $r$ in $W_r^r$.   This can be seen by the following lemma \cite[Lemma 4.9]{wei2023minimum}:

\begin{lem}
\label{lem:compactness}
Suppose that $\Theta=\Rb^q$ and the function class $\myF$ is uniformly bounded, i.e.  $\sup_{\myf\in \myF} \sup_{\theta\in \Theta}|\myf(\theta)|<\infty$.  Fix $G_0 \in \Ec_{k_0}(\Theta)$ such that $G_0$ has exactly $k_0 \leq k$ atoms. 
Then for any $r>0$, 
\begin{equation}
\liminf_{ \substack{G\overset{W_1}{\to} G_0\\ G\in \Gc_k(\Theta) }} \frac{\|G-G_0 \|_{\myF}}{W_{r}^{r}(G,G_0)} =0. 
\end{equation}
\end{lem}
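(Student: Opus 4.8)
The plan is to exploit the unboundedness of $\Theta=\Rb^q$ by pushing one atom of $G_0$ off to infinity with vanishingly small mass, so that the $W_1$-distance to $G_0$ shrinks to zero while, on the normalized scale $W_r^r$, the mass shrinks much faster than $W_r^r$ itself; meanwhile the $\Phi$-distance is controlled by the mass moved because $\Phi$ is uniformly bounded. The key insight is that moving mass $\epsilon$ a distance $R$ contributes $\epsilon R$ to $W_1$ but $\epsilon R^r$ to $W_r^r$, and by letting $R\to\infty$ we get a favorable ratio only if $r>1$; for $r\le 1$ we instead move mass a bounded distance but exploit that $\|G-G_0\|_\Phi\le 2\epsilon\sup\|\phi\|_\infty$ decays like $\epsilon$ while $W_r^r$ decays only like $\epsilon$ (when $r\ge1$, $W_r^r\asymp\epsilon$ for a bounded move) or slower — so a more careful construction is needed across the whole range $r>0$.

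\medskip

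\noindent\textbf{Construction.} Write $G_0=\sum_{i=1}^{k_0}p_i^0\delta_{\theta_i^0}$ and assume $k_0\ge 1$. Fix the first atom $\theta_1^0$ and, for parameters $\epsilon\in(0,p_1^0)$ and $R>0$ to be chosen, set
\begin{equation*}
G_{\epsilon,R} := (p_1^0-\epsilon)\delta_{\theta_1^0} + \epsilon\,\delta_{\theta_1^0 + R e_1} + \sum_{i=2}^{k_0} p_i^0 \delta_{\theta_i^0},
\end{equation*}
where $e_1$ is the first standard basis vector. Since $k_0\le k$, we have $G_{\epsilon,R}\in\Gc_k(\Theta)$. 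An explicit coupling that leaves all mass in place except transporting mass $\epsilon$ from $\theta_1^0$ to $\theta_1^0+Re_1$ gives
\begin{equation*}
W_1(G_{\epsilon,R},G_0) \le \epsilon R, \qquad W_r^r(G_{\epsilon,R},G_0) \ge c_0\,\epsilon R^r
\end{equation*}
for a constant $c_0>0$ depending only on $G_0$ and $r$ (the lower bound on $W_r^r$ holds once $R$ exceeds twice the diameter of $\mathrm{supp}(G_0)$, since then the cheapest way to account for the stray atom at distance $\gtrsim R$ costs at least $\epsilon\cdot(R/2)^r$ — this uses that $r\mapsto t^r$ and the triangle-type bookkeeping for optimal transport with a far-away atom). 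On the other side, for any $\phi\in\Phi$,
\begin{equation*}
|G_{\epsilon,R}\phi - G_0\phi| = \epsilon\,|\phi(\theta_1^0+Re_1) - \phi(\theta_1^0)| \le 2\epsilon\,M, \qquad M:=\sup_{\phi\in\Phi}\sup_{\theta\in\Theta}|\phi(\theta)|<\infty,
\end{equation*}
so $\|G_{\epsilon,R}-G_0\|_\Phi \le 2M\epsilon$. Therefore
\begin{equation*}
\frac{\|G_{\epsilon,R}-G_0\|_\Phi}{W_r^r(G_{\epsilon,R},G_0)} \le \frac{2M\epsilon}{c_0\,\epsilon R^r} = \frac{2M}{c_0 R^r}.
\end{equation*}

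\medskip

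\noindent\textbf{Taking limits.} Now choose a sequence $R_n\to\infty$ and $\epsilon_n := R_n^{-2}$ (any choice with $\epsilon_n R_n\to 0$ works), so that $W_1(G_{\epsilon_n,R_n},G_0)\le \epsilon_n R_n = R_n^{-1}\to 0$, i.e.\ $G_{\epsilon_n,R_n}\overset{W_1}{\to}G_0$ with $G_{\epsilon_n,R_n}\in\Gc_k(\Theta)$. Along this sequence the ratio is at most $2M/(c_0 R_n^r)\to 0$. Hence the liminf over all $G\overset{W_1}{\to}G_0$ in $\Gc_k(\Theta)$ of $\|G-G_0\|_\Phi / W_r^r(G,G_0)$ is zero, which is exactly the claim. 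This argument works uniformly for every $r>0$: the only place $r$ enters is the denominator $R^r$, which tends to infinity as $R\to\infty$ for any $r>0$.

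\medskip

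\noindent\textbf{Main obstacle.} The only delicate point is justifying the lower bound $W_r^r(G_{\epsilon,R},G_0)\ge c_0\epsilon R^r$ cleanly, since $W_r$ for $r<1$ is not a metric and one must argue directly from the definition \eqref{eq:optdist} (with the understanding that for $r<1$ we mean $\int\|\theta-\theta'\|^r d\kappa$ without the outer root, matching how $W_r^r$ is used throughout the paper). The point is that in any coupling $\kappa$ between $G_{\epsilon,R}$ and $G_0$, the atom of mass $\epsilon$ at $\theta_1^0+Re_1$ in $G_{\epsilon,R}$ must be matched to mass sitting in $\mathrm{supp}(G_0)$, all of which lies at distance at least $R - \mathrm{diam}(\mathrm{supp}\,G_0) \ge R/2$ once $R$ is large; hence $\int\|\theta-\theta'\|^r d\kappa \ge \epsilon (R/2)^r$, giving $c_0 = 2^{-r}$. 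Everything else is the elementary bookkeeping above, so I would keep the writeup to the construction, the two displayed inequalities, and the limit.
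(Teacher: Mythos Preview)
The paper does not give its own proof of this lemma; it is merely cited as \cite[Lemma 4.9]{wei2023minimum}. Your construction---splitting off mass $\epsilon$ from one atom and sending it to distance $R$, then letting $R\to\infty$ with $\epsilon R\to 0$---is precisely the natural argument and is correct in all its analytic details: the $W_1$ upper bound, the $W_r^r$ lower bound via the marginal constraint on the far atom, and the $\Phi$-distance bound $2M\epsilon$ are all fine.

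There is one slip. Your measure $G_{\epsilon,R}$ has $k_0+1$ distinct atoms once $R$ is large, so ``$k_0\le k$ implies $G_{\epsilon,R}\in\Gc_k(\Theta)$'' is wrong: you need $k_0+1\le k$, i.e.\ $k_0<k$. This is not a cosmetic issue, because the lemma as stated (for \emph{every} $r>0$) is actually false in the boundary case $k_0=k$: take $k_0=k=1$, $G_0=\delta_0$, $\Phi=\{\sin\}$; then any $G\in\Gc_1(\Rb)$ with $W_1(G,G_0)\to 0$ is $G=\delta_\theta$ with $\theta\to 0$, and the ratio $|\sin\theta|/|\theta|^r\to 1$ for $r=1$. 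So either the lemma is intended for the overfitted regime $k_0<k$ (which is the only case the surrounding discussion uses), or the hypothesis should read $k_0<k$. Your argument is complete once you make this restriction explicit.
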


In general, if $\Theta$ is unbounded, then weaker inverse bounds, such as the ones given in Theorem \ref{thm:convolution} for convolution mixtures may be available, provided that a moment constraint is imposed on the subset of mixing measures in $\Pcal(\Theta)$ (cf. Theorem 2 and the following remark in \cite{nguyen2013convergence}).

\subsection{Weak identifiability theory}
\label{sec:weakidentifiability}

In this section we focus on the test functions which arise directly from the probability kernel $f$, namely 
\begin{equation}
\label{eqn:kernelclass}
    \Phi = \{\theta \mapsto f(x|\theta)|x \in \Xf\}.
\end{equation} 
As mentioned earlier, much of the existing theory for mixture models are related to the analysis of this function class due to the ease of verification using the classical $m$-strong identifiability condition. 

Specifically, if we restrict the kernel $f$ to a translation-invariant kernel,  i.e.
\[
f(x \mid \theta) = g(x - \theta)
\qquad \text{for some } g : \mathbb{R} \to \mathbb{R}.
\]
so that the resulting mixture is a convolution mixture, then the $m$-strong identifiability condition holds quite easily, due to the following result (cf. Theorem 3 of \cite{chen1995optimal} and Lemma~B.2 of \cite{wei2023minimum}):

\begin{lem}
\label{eqn:locationmixture}
Let $g(x)$ be a function on $\Rb$ that is $m$-th order differentiable for every $x\in \Rb$ and that the $j$-th derivative $\frac{d^j}{dx^j}g(x)$ is Lebesgue integrable for any $j\in [m]$. Then the convolution mixture with density (w.r.t. Lebesgue measure) kernel $f(x \mid \theta)=g(x-\theta)$ is $m$-strongly identifiable. 
\end{lem}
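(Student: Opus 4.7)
The goal is to verify Definition 3.1 (with order $m$) for the family $\Phi = \{\theta \mapsto g(x-\theta) : x \in \Rb\}$. Fix distinct $\theta_1,\ldots,\theta_\ell \in \Theta$ and constants $\{a_{ij}\}_{i\in[\ell],\,0\le j\le m}$, and suppose
\[
\sum_{i=1}^{\ell} \sum_{j=0}^{m} a_{ij}\, \frac{\partial^j}{\partial \theta^j} g(x-\theta)\Big|_{\theta=\theta_i} = 0 \qquad \forall x \in \Rb.
\]
By the chain rule $\frac{\partial^j}{\partial \theta^j} g(x-\theta) = (-1)^j g^{(j)}(x-\theta)$, so upon setting $b_{ij} := (-1)^j a_{ij}$ the task reduces to showing that $\sum_{i,j} b_{ij}\, g^{(j)}(x-\theta_i) = 0$ for all $x$ forces every $b_{ij} = 0$.

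The approach is Fourier analysis on $\Rb$. Since $g$ is a probability density, $g \in L^1(\Rb)$, and by hypothesis $g^{(j)} \in L^1(\Rb)$ for $j \in [m]$. From the representation $g^{(j-1)}(x) = g^{(j-1)}(0) + \int_0^x g^{(j)}(t)\,dt$ one sees that $g^{(j-1)}$ admits limits at $\pm\infty$, and those limits must be zero since $g^{(j-1)}$ itself is integrable; hence integration by parts $j$ times yields $\widehat{g^{(j)}}(\omega) = (i\omega)^j \tilde{g}(\omega)$. Taking Fourier transforms of the displayed identity produces
\[
\tilde{g}(\omega)\,P(\omega) = 0 \quad \forall \omega \in \Rb, \qquad P(\omega) := \sum_{i=1}^\ell Q_i(\omega)\, e^{-i\omega\theta_i},
\]
where $Q_i(\omega) := \sum_{j=0}^m b_{ij}(i\omega)^j$ is a polynomial of degree at most $m$. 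Because $\tilde{g}$ is continuous and $\tilde{g}(0)=\int g = 1$, there is an open neighborhood $U$ of $0$ on which $\tilde{g} \neq 0$; hence $P \equiv 0$ on $U$. Since $P$ extends to an entire function on $\mathbb{C}$, analytic continuation forces $P \equiv 0$ globally.

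It remains to invoke the classical linear independence of exponential-polynomials: with distinct $\theta_i$ and polynomials $Q_i$ of bounded degree, the identity $\sum_i Q_i(\omega) e^{-i\omega\theta_i} \equiv 0$ forces each $Q_i \equiv 0$, which can be proved by induction on $\ell$ by comparing growth rates along complex rays where a single exponential dominates (alternatively, by differentiating $\deg(Q_\ell)+1$ times after multiplying by $e^{i\omega\theta_\ell}$ to annihilate the $\ell$-th summand and reduce to $\ell-1$ distinct frequencies). Hence every $b_{ij} = 0$, as required. The step requiring the most care is the Fourier differentiation formula $\widehat{g^{(j)}} = (i\omega)^j \tilde{g}$ under only $L^1$ assumptions on $g$ and its derivatives, which the decay-at-infinity argument above handles; every other step is standard.
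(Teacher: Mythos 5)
Your proof is correct and follows essentially the same route as the paper's (via Chen's Theorem 3 and Lemma~B.2 of \cite{wei2023minimum}): take Fourier transforms, use $\widehat{g^{(j)}}(\omega)=(i\omega)^j\tilde g(\omega)$, and conclude by linear independence of the exponential-polynomials $\omega^j e^{-i\omega\theta_i}$. In particular, your derivation of the decay of $g^{(j-1)}$ at $\pm\infty$ from integrability of $g^{(j-1)}$ and $g^{(j)}$ is exactly the refinement that distinguishes this lemma from the earlier versions which simply \emph{assume} the derivatives vanish at infinity.
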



Unfortunately, as soon as one moves beyond translation-invariant kernels, e.g., when the parameter $\theta$ in the probability kernel $f(x|\theta)$ is composed of parameters of different types (such as location, scale, and/or skewness) then $m$-strong identifiability conditions are typically violated, even for $m=1,2$ and so on. 
This is especially common for kernels which carry some physical interpretation and that may be originally motivated as solutions of some partial differential equations. In the examples that follow, such PDEs correspond to the violation of linear independence for the partial derivatives of the kernel $f$. This turns out to have a deep impact on the convergence behavior of the mixing measure $G$ for most standard estimators.


\begin{exa}[Location-scale Gaussian kernel] \label{exa:weakgau}
Consider the Gaussian kernel on $\R^d$
    \[
        f(x \mid \theta, \Sigma)
        = \frac{1}{\sqrt{(2\pi)^{d} |\Sigma|}}
        \exp\left(-\frac{1}{2}\langle x-\mu, \Sigma^{-1}(x-\mu)  \rangle \right)
    \]
One can check that
\begin{equation}
\label{eqn:pde-gauss}
    \frac{\partial^2 f}{\partial \theta^2}
    =
    2\,\frac{\partial f}{\partial \Sigma}.
\end{equation}
This is known as the \emph{heat equation}.  
This means that $f(x \mid \theta, \Sigma)$ fails not satisfy second-order identifiability, due to the interaction between the mean parameter $\theta$ and covariance parameter $\Sigma$, for the kernel-based test function class. 
\myeoe
\end{exa}

\begin{exa}[Two-parameter Gamma kernel] \label{exa:gamma}
 Consider the gamma distribution with density $f(x|\alpha,\beta) = \frac{\beta^\alpha}{\Gamma(\alpha)}x^{\alpha-1}e^{-\beta x}\1ve_{(0,\infty)}(x)$ with $\theta=(\alpha,\beta)\in \Theta=\{(\alpha,\beta)|\alpha>0,\beta>0\}$, and $\Gamma(x)$ denotes the Gamma function. 
Then 
\begin{equation}
\label{eqn:pde-gamma}
\frac{\partial}{\partial \beta}f(x|\alpha,\beta) = \frac{\alpha}{\beta}f(x|\alpha,\beta) - \frac{\alpha}{\beta}f(x|\alpha+1,\beta),
\end{equation}
This means that $f(x \mid \theta)$ violates first-order identifiability. Moreover, the interaction is dependent on parameter values across distinct
components. In other words, even the first-order strong identifiability condition is violated. One cannot appeal to Lemma \ref{lem:inverseboundfixoneargumentexactfitted} to obtain a pointwise inverse bound such as \eqref{eqn:localinverseboundfixargumentexactfitted}. 

In fact, such a bound is not achievable.
Fix $k_0\geq 2$ and consider $\mathcal{G}\subset \Ec_{k_0}(\Theta^{\circ})$, the subset of probability measures that have exactly $k_0$ atoms in $\Theta^\circ$, as
\begin{align*}
\mathcal{G}:= \{G\in \Ec_{k_0}(\Theta)| &G = \sum_{i=1}^{k_0}p_i\delta_{\theta_i} \text{ and there exist } i\neq j \\
&\text{ such that } \theta_j-\theta_i=(1,0) \}.
\end{align*}
Consider any $G_0 \in \mathcal{G}$, it can be shown that \citep{ho2016convergence,wei2022convergence}:
$$
\liminf_{\substack{G\overset{W_1}{\to} G_0\\ G\in \Ec_{k_0}(\Theta)}} \frac{h(P_G,P_{G_0})}{W_1(G,G_0)}=\liminf_{\substack{G\overset{W_1}{\to} G_0\\ G\in \Ec_{k_0}(\Theta)}} \frac{V(P_G,P_{G_0})}{W_1(G,G_0)}=0.
$$
This implies that even if $V(p_G,p_{G_0})$ vanishes at a fast rate, $W_1(G,G_0)$ may not vanish as fast.  
%
%
The "problematic" set $\mathcal{G}$ is referred to as \emph{pathological} set of parameter values for which the
first-order identifiability conditions fail to hold. This also corresponds to the subset values of parameter $G$ for which the Fisher information matrix is singular \cite{ho2019singularity}.
\myeoe
\end{exa}

\begin{comment}
\begin{exa}[Gamma kernel] \label{exa:gamma}
Consider Gamma kernel:
\[
p(x \mid \theta) := p(x \mid a,b)
= \frac{b^{a}}{\Gamma(a)} x^{a-1} e^{-bx},
\qquad a>0,\; b>0.
\]
Then
\[
\frac{\partial}{\partial b} p(x \mid a,b)
= \frac{a}{b}\, p(x \mid a,b)
- \frac{a}{b}\, p(x \mid a+1,b).
\]
This means that $p(x \mid \theta)$ violates first-order identifiability. Moreover, the interaction is dependent on parameter values across distinct
components (note the presence of $a$ and $a+1$ in the location parameter). See \cite{ho2016convergence,wei2022convergence} for more details. 
\myeoe
\end{exa}
\end{comment}

\begin{exa}[Skew-normal kernel] Consider
\(
f(x \mid \theta) = f(x \mid \mu, v, m),
\)
where $\mu$ is a location parameter, $v$ is a scale parameter, and $m$ is a skewness parameter, with
\[
f(x \mid \mu, v, m)
:= \frac{2}{\sqrt{v}}\,
\psi\!\left(\frac{x-\mu}{\sqrt{v}}\right)
\Psi\!\left(\frac{m(x-\mu)}{\sqrt{v}}\right).
\]
where $\psi$ and $\Psi$ in this example are probability density function and cumulative density function of standard normal distribution. The partial derivatives of the skew-normal kernel $f$ are highly dependent in multiple ways, as noticed in \cite{ho2019singularity}:
\begin{equation}
\label{eqn:pde-skewnormal}
\begin{cases}
\displaystyle
\frac{\partial^2 f}{\partial \mu^2}
- 2\,\frac{\partial f}{\partial v}
+ \frac{m^2 + m}{v}\,\frac{\partial f}{\partial m}
= 0, \\[1.2em]
\displaystyle
2m\,\frac{\partial f}{\partial m}
+ (m^2 + 1)\,\frac{\partial^2 f}{\partial m^2}
+ 2vm\,\frac{\partial^2 f}{\partial v\,\partial m}
= 0 .
\end{cases}
\end{equation}
This means that $f(x\mid\theta)$ violates second-order identifiability. The interaction among the parameters $\mu$, $v$, and $m$ occurs in different ways, and also depends on specific parameter values (such as when $m=0$). 
\myeoe
\end{exa}

Following \cite{ho2016convergence}, whenever either first or second-order identifiability conditions are violated, the mixture model is said to be in the setting of \emph{weak identifiability}. Due to the lack of strong identifiability conditions such as those formulated in subsection \ref{sec:strongpointwise}, we do not expect fast rates of estimation for the latent mixing measure such as the ones obtained by Theorem \ref{thm:convergenceratepointwise} (and its ensuing remarks).

\vspace{.1in}
\noindent \underline{Location-scale Gaussian mixtures} 
%
One of the first theoretical results for weakly identifiable mixture models was obtained for learning location-scale finite Gaussian mixtures, under the setting that only an upper bound on the number of mixture components $k$ is given. 
As before, the multivariate location-scale Gaussian kernel is denoted by $\{f(x \mid \theta, \Sigma)\}$ where $\theta$ ranges in a compact subset  $\Theta \subset \mathbb{R}^d$ and $\Sigma$ in a compact subset $\Omega \subset \mathbb{S}_d^{++}$ of the symmetric positive definite $d\times d$ matrices.
Given an $n$-i.i.d.\ sample $X_1,\ldots,X_n$ generated according to a Gaussian
mixture density
\[
p_{G^*}(x) = \int f(x \mid \theta, \Sigma)\, G^*(d\theta, d\Sigma),
\]
where the true mixing measure $G^*=G_0 = \sum_{i=1}^{k_0} p_i^0 \, \delta_{(\theta_i^0,\Sigma_i^0)}$ has
$k_0 \ge 1$ distinct support points, where $k_0 < k$.
Note that both $G^*$ and $k_0$ are unknown. 

Now, we shall overfit the data with a mixture of $k$ Gaussian distributions using the $n$-sample,
where $k \ge k_0 + 1$. Denote by
\(
\Gc_k := \Gc_k(\Theta \times \Omega)
\)
the set of probability measures on $\Theta \times \Omega$ with at most $k$
support points, and
\(
\mathcal{E}_{k_0} := \mathcal{E}_{k_0}(\Theta \times \Omega)
\)
the set of probability measures on $\Theta \times \Omega$ with exactly $k_0$
support points. In addition, given $c_0 \in [0,1)$, define a subset of
$\mathcal{E}_\ell$ by
\[
\mathcal{E}_{\ell,c_0}
:=
\left\{
  G = \sum_{i=1}^{\ell} p_i \delta_{(\theta_i,\Sigma_i)} \in \mathcal{E}_\ell
  :\ p_i \ge c_0 \ \forall\, 1 \le i \le \ell
\right\}.
\]
Define $\Gc_{k,c_0} = \cup_{\ell \in [k]} \mathcal{E}_{\ell,c_0}$. Of interest is the behavior of standard estimation methods such as the MLE or a Bayesian estimation procedure for $G$ restricted to the set of parameters $\Gc_{k,c_0}$.

Due to the lack of second-order identifiability of the Gaussian kernel in an overfitted setting, we do not expect a fast rate of convergence for the mixing measure, i.e. the rate $n^{-\frac{1}{4}}$ under $W_2$, according to Theorem \ref{thm:convergenceratepointwise}.
In fact, for the case of $k-k_0 = 1$, i.e., when the Gaussian mixture is overfitted by one extra component, the convergence rate $n^{-1/8}$ for the model parameters was established in a hypothesis test of heterogeneity \cite{chen2003tests}, or in fitting regression mixtures \cite{Shimotsu-2014}.

In general, it is shown by \cite{ho2016convergence} that the convergence rate of MLE for the mixing measure $G$ is of the order $n^{-1/(2\bar{r})}$ under the Wasserstein distance $W_{\bar{r}}$, modulo a log factor, where $\bar{r}$ is determined by the order of a set of polynomial equations.
Specifically, denote by $\bar{r} \ge 1$ the \emph{minimum} value of $r$ such that
the following system of polynomial equations:
\begin{equation*}
\sum_{j=1}^{k-k_0+1} \;
\sum_{n_1,n_2}
\frac{c_j^{\,2}\, a_j^{\,n_1} b_j^{\,n_2}}{n_1!\,n_2!}
= 0,
\qquad \text{for each } \alpha = 1,\ldots,r,
\end{equation*}
does \emph{not} have any nontrivial solution for the unknowns
$\{(a_j,b_j,c_j)\}_{j=1}^{k-k_0+1}$.
The ranges of $n_1,n_2$ in the second sum are all natural pairs satisfying
$n_1 + 2n_2 = \alpha$.
A solution is considered nontrivial if all of the $c_j$’s are nonzero, while at
least one of the $a_j$’s is nonzero.
In particular, if $k-k_0=1$ then $\bar{r}= 4$, but if $k-k_0=2$, $\bar{r}=6$. If $k-k_0 \geq 3$, then $\bar{r} \geq 7$. It is expected, although there is no proof, that $\bar{r} \uparrow$ as the amount of overfitting $k-k_0$ increases \citep{ho2016convergence}.

At the heart of this theory is the derivation of the pointwise overfitted inverse bound
 \begin{equation}
 \label{eqn:inversegauss}
    \liminf_{ \substack{G\overset{W_1}{\to} G_0\\ G\in \Gc_{k,c_0} }}
    \frac{V(P_G, P_{G_0})}{W_{\bar{r}}^{\,\bar r}(G, G_0)}
    > 0 .
\end{equation}
Moreover, the order $\bar{r}$ in the inverse bound cannot be improved, in the sense that if we replace $\bar{r}$ by any $r\in [1,\bar{r})$ the left hand side of Eq. \eqref{eqn:inversegauss} will be zero. Once this inverse bound is established, one may transfer the convergence behavior of the density estimate for $P_{G}$ into that of the mixing measure. This is made precise by the following (Theorem 2.1 of \cite{ho2016convergence}):

\begin{thm}
Let $\bar r$ be the singularity level as defined above, and $G_0\in \Ec_{k_0,c_0}$ for some $c_0>0$. Let $L, \underline{\lambda}, \bar\lambda$ be fixed positive numbers.
Given
\(
\Theta = [-L, L]^d
\) 
and let $\Omega$ be a subset of $\mathbb{S}_d^{++}$ whose eigenvalues are bounded
in the interval $[\underline{\lambda}, \bar{\lambda}]$.
Let $\hat G_n$ be the maximum likelihood estimate ranging in
$\mathcal{O}_{k,c_0}$.  Then
\begin{align*}
&\Pb_{G_0}
\Bigl(
  W_{\bar{r}}(\hat G_n, G_0)
  >
  C (\log n / n)^{1/(2\bar r)}
\Bigr) \\
\;\lesssim\; &
\exp(-c \log n).
\end{align*}
The constants $C$ and $c$ are positive and depend only on
$d, L, \underline{\lambda}, \bar\lambda, c_0$, and $G_0$.
\end{thm}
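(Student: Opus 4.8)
The plan is to derive the rate from two facts: the local inverse bound \eqref{eqn:inversegauss}, which we take as already established, and a Hellinger-distance rate for the maximum likelihood estimator over the constrained class $\Gc_{k,c_0}$. By \eqref{eqn:inversegauss} there are constants $\epsilon_0>0$ and $c_1>0$ with
\[
V(P_G,P_{G_0})\ \ge\ c_1\,W_{\bar r}^{\,\bar r}(G,G_0)\qquad\text{for all }G\in\Gc_{k,c_0}\text{ with }W_1(G,G_0)<\epsilon_0,
\]
and since $V\le\sqrt2\,h$, this reads $W_{\bar r}^{\,\bar r}(G,G_0)\le(\sqrt2/c_1)\,h(P_G,P_{G_0})$ on that $W_1$-neighborhood. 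Hence it suffices to show, with probability at least $1-\exp(-c\log n)$, that (a) $\hat G_n$ lies in $\{W_1(\cdot,G_0)<\epsilon_0\}$ and (b) $h(p_{\hat G_n},p_{G_0})\le C_0(\log n/n)^{1/2}$; combining the two and taking $(\cdot)^{1/\bar r}$ then yields $W_{\bar r}(\hat G_n,G_0)\le(\sqrt2 C_0/c_1)^{1/\bar r}(\log n/n)^{1/(2\bar r)}$, the asserted bound.

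For step (b) I would invoke the standard bracketing-entropy analysis of the MLE. The family $\Pc_k:=\{p_G:G\in\Gc_{k,c_0}\}$ consists of mixtures of at most $k$ Gaussian densities with means in the compact cube $\Theta=[-L,L]^d$ and covariances with eigenvalues in $[\underline\lambda,\bar\lambda]$; consequently every $p_G$ is uniformly bounded above, stays positive on compacta, and is Lipschitz in $L^1$ (hence in $h$) as a function of the finitely many mixture parameters, uniformly over the class. Therefore the bracketing entropy satisfies $H_B(\delta,\Pc_k,h)\lesssim\log(1/\delta)$ for small $\delta$, with constant depending only on $d,k,L,\underline\lambda,\bar\lambda$ (cf. the entropy estimates for finite normal mixtures in \cite{ghosal2001entropies}). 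Since the log-likelihood is continuous on the compact set $\Gc_{k,c_0}$, the MLE $\hat G_n$ exists, and plugging this parametric entropy rate into the standard deviation inequality for the MLE (see \cite{vandeGeer-00,ghosal2017fundamentals}) gives constants $C_0,c_3>0$ with $\Pb_{G_0}\!\bigl(h(p_{\hat G_n},p_{G_0})>C_0(\log n/n)^{1/2}\bigr)\le\exp(-c_3\log n)$.

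Step (a) then follows by a compactness/identifiability argument. The set $\Gc_{k,c_0}$ is compact in the weak topology (the ground space $\Theta\times\Omega$ is compact, and the weight constraint $p_i\ge c_0$ keeps every limit inside $\Gc_{k,c_0}$), on which $W_1$ metrizes convergence, and $G\mapsto P_G$ is continuous (Scheff\'e) and injective (classical identifiability of finite location--scale Gaussian mixtures); hence its inverse is continuous, so for the $\epsilon_0$ above there is $\delta_0>0$ with $\{G\in\Gc_{k,c_0}:h(P_G,P_{G_0})<\delta_0\}\subseteq\{W_1(\cdot,G_0)<\epsilon_0\}$. For $n$ large enough that $C_0(\log n/n)^{1/2}<\delta_0$, the event $\{W_1(\hat G_n,G_0)\ge\epsilon_0\}$ is contained in $\{h(p_{\hat G_n},p_{G_0})\ge C_0(\log n/n)^{1/2}\}$. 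On the complement of the latter, (a) holds and $h(p_{\hat G_n},p_{G_0})<C_0(\log n/n)^{1/2}$, so \eqref{eqn:inversegauss} applies and gives $W_{\bar r}(\hat G_n,G_0)<C(\log n/n)^{1/(2\bar r)}$ with $C:=(\sqrt2 C_0/c_1)^{1/\bar r}$; equivalently, for all large $n$,
\[
\bigl\{W_{\bar r}(\hat G_n,G_0)>C(\log n/n)^{1/(2\bar r)}\bigr\}\ \subseteq\ \bigl\{h(p_{\hat G_n},p_{G_0})\ge C_0(\log n/n)^{1/2}\bigr\},
\]
whose probability is at most $\exp(-c_3\log n)$. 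This proves the theorem with $c=c_3$, while $C$ and $c$ inherit dependence only on $d,L,\underline\lambda,\bar\lambda,c_0,G_0$ (the last through $\epsilon_0$ and $c_1$).

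The one genuinely delicate ingredient in this chain is the inverse bound \eqref{eqn:inversegauss} itself --- pinning down $\bar r$ through the solvability of the polynomial system in the statement, and carrying out the matching Taylor expansion of $p_G-p_{G_0}$ that controls how the extra atoms coalesce onto the $k_0$ true support points without being annihilated by the heat-equation relation \eqref{eqn:pde-gauss}. Since that bound is given to us, the remaining care points are comparatively mild: making the entropy bound (hence the Hellinger rate) uniform over all of $\Gc_{k,c_0}$, and matching the localization radius $\delta_0$ to the $W_1$-neighborhood $\epsilon_0$ on which \eqref{eqn:inversegauss} is active, so that the two exceptional events collapse into one.
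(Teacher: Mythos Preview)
Your proposal is correct and follows essentially the same approach the paper outlines: the paper itself does not prove this theorem in detail but attributes it to \cite{ho2016convergence} and sketches exactly the two-step argument you give --- establish the inverse bound \eqref{eqn:inversegauss}, then transfer a root-$n$ Hellinger rate for the MLE (obtained via bracketing entropy for the compact class of location--scale Gaussian mixtures) to $W_{\bar r}$, with the compactness/identifiability localization handling the passage from the global Hellinger bound to the $W_1$-neighborhood where \eqref{eqn:inversegauss} is active.
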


This result shows that pointwise convergence rate for standard estimators such as MLE may deteriorate rather quickly when working with an excessively overfitted mixture models. It is worth noting that the pointwise convergence rate of order root-$n$ may still be achieved, up to a logarithmic factor, by applying a suitable post-processing procedure \citep{do2024dendrogram}. It remains an open question regarding the nature of minimax optimality for the overfitted setting of location-scale Gaussian mixtures.

\subsection{Inverse bounds for singular mixture models}
\label{sec:singular}

The violation of first-order identifiability condition in the exact-fitted setting, and of the second-order identifiability condition in the overfitted setting is the indicator of the lack of non-singularity for the corresponding Fisher information matrix of the true model. When the Fisher information is non-singular, classical asymptotic theory tells us that a generic fast (root-$n$) rate of parameter estimation may be attained by a variety of generic estimation methods \citep{vanderVaart-98}. When the Fisher information is singular, fast rates of parameter estimation may no longer be expected for parametric model classes under many standard estimators. The overfitted mixture models with location-scale Gaussian kernels, and mixture models of two parameter Gamma kernels with pathological parameter values as described in the previous section are instances of \emph{singular} Fisher information  mixture models. These are examples where we know exactly where the singular points are; such information can be used to derive concrete rates of estimation for the model parameters. For instance, for location-scale Gaussian mixtures, singularities arise due to the overfitting, and the rates of parameter estimation depend on how much the level of overfitting $k-k_0$ is. The actual rate is dictated by $\bar{r}$ that given in the previous section. $\bar{r}$ is referred to as a \emph{singularity level}, a general notion of singularity structure formalized in \cite{ho2019singularity}.

Singularity levels are just one of several structural concepts that allow us to describe, roughly speaking, \emph{how singular}  a singular model is. Such notions are devised to delineate the complexity of the parameter space (of mixing measures) based on which one may still be able to obtain fine details about the convergence behavior for parameter estimation.
In general, it can be found that different singular models carry different convergence behaviors; there are no longer generic rates of convergence. In fact, even different parameters within a singular model may possess different convergence behaviors, due to a kind of inhomogeneity. Moreover, different ranges of values for a single parameter may also receive different rates of convergence. This complex story was illustrated in great details for the skew-normal mixtures by \cite{ho2019singularity}, who also introduced a general method for deriving pointwise inverse bounds, where optimal transport based distances play a very useful role in expressing the aforementioned complexity. For the remainder of this section we shall provide a sketch of this theory.

\vspace{.1in}
\noindent \underline{Formalizing singularity structure}
Starting from the aforementioned partial differential structures expressed by identities such as \eqref{eqn:pde-gauss}, \eqref{eqn:pde-gamma}, or \eqref{eqn:pde-skewnormal}, we seek to represent the likelihood function $p_G(x)$ (as a function of $G$) in terms of linearly independent functions, which lead to a minimal form representation~\cite{ho2019singularity}. The minimal forms provide a basis for
studying the behavior of the likelihood as $G$ varies in a suitable neighborhood of mixing measures metrized by a suitable optimal transport distance. In particular, the singularity level of a mixing measure $G$ describes in a precise manner the variation of the mixture likelihood $p_G(x)$ with respect to changes in mixing measure $G$. Now, Fisher information singularities simply correspond to points in the parameter space which identify a mixing measure whose singularity level is non-zero. Within the subset of singular points, the parameter space can be partitioned into disjoint subsets determined by different singularity levels, 1, 2, 3,... and so on.

Suppose that a point in the parameter space corresponds to a mixing measure $G_0$ carrying singularity level $r$, then one can obtain \emph{pointwise inverse bounds} relating the Wasserstein distance in mixing measure to the Hellinger distance between corresponding mixture densities (cf. Theorem 3.1 and 3.2 of~\cite{ho2019singularity}): For any $s\geq r+1$, 
\begin{equation}
W_{s}^s(G,G_0) \lesssim h(p_G, p_{G_0})
\label{inverse}
\end{equation}
holds for any $G$ such that the left hand side is sufficiently small. Moreover, the exponent $s$ in the bound cannot be improved. 
An immediate consequence of such an inverse bound  is the impact of the singularity level on rates of parameter estimation.
Given an i.i.d.
$n$-sample from a (true) mixture density $p_{G_0}$, where $G_0$ admits a singularity
level $r$. This will entail under some mild conditions on $f$ that a
standard estimation method such as maximum likelihood estimation and Bayesian estimation
with a non-informative prior yields root-$n$ rate of convergence for the mixture density $p_G$, which
induces the rate of convergence $n^{-1/2(r+1)}$ for the mixing measure $G$, which is also a minimax lower bound
(up to a logarithmic factor), under the suitable Wasserstein metric $W_{r+1}$.
Thus, singularity level 0 results in root-$n$ convergence rate for mixing measure estimation.
Fisher singular points corresponding to singularity level 1, 2, or 3 exhibit
convergence rates $n^{-1/4}, n^{-1/6}, n^{-1/8}$ or so on.

\vspace{.1in}
\noindent \underline{Handling mixed parameter types}
The singularity level is defined for the entire collection of parameters encapsulated by the mixing measure $G$. As illustrated for location-scale Gaussian mixtures, where we used the notation $\bar{r}$, they relate to a notion of dimensionality for some \emph{semi-algebraic} sets, i.e., sets that can be defined as solutions of a system of polynomial equations combined with some inequalities. However, to anticipate the inhomogeneity of parameters of different types (e.g., location, scale, skewness) that arise from the inhomogeneity of such polynominal systems, a refined notion of singularity structure is useful: a vector-valued \emph{singularity index}, which extends the notion of natural-valued singularity level described
earlier. The singularity index describes the variation of the mixture likelihood with
respect to changes of individual parameters of each type. Moreover, pointwise inverse bounds can be established in terms of a generalized optimal transport distance, where the cost of transportation is defined in terms of a semi-metric in which each type of parameters contributes differently according to its corresponding singularity index. 
As a result, a singularity index $\kappa$ corresponds to singularity level $(\|\kappa\|_\infty-1)$ for
the mixing measure, which may admit the rate of convergence $n^{-1/2\|\kappa\|_\infty}$, but the individual parameter of type $j$ may admit the rate of convergence $n^{-1/2\kappa_{j}}$ instead.
We refer the reader to \cite{ho2019singularity} for the full details.

\section{Inference of de Finetti's mixing measure}

From this point on, we depart the basic mixture modeling land to venture into a much larger realm of latent structured models. The range of such probabilistic modeling creatures one may encounter is vast, as they have been invented and evolving over time to accommodate increasingly complex data domains and evermore ambitious inferential tasks by statistical modelers working in a variety of fields. In this article, for a number of reasons we will primarily be concerned with hierarchical models. First, they cover a large number of statistical models that have been successfully deployed in in real-world applications, as mentioned in the Introduction. Second, hierarchical models represent the principal modeling toolbox for statistical modelers, especially (but not exclusively so) the practitioners of Bayesian analysis. The third reason is about a conceptual and theoretical appeal: a hierarchical model may be viewed as an abstract version of mixture modeling, as it is a mixture of mixture distributions. This allows one to inherit many valuable insights and theory developed for the basic mixture building blocks to address questions that arise from hierarchical model-based inference. The interesting conceptual leap from the basic mixture modeling framework is that hierarchical model based inference is often motivated from the need or the opportunity of working with not one, but multiple data population samples. 

\subsection{de Finetti's theorem and implications}
\label{sec:deFinetti}
It may be difficult to find a single organizing principle for hierarchical modeling, but the representation theorem of Bruno de Finetti's on 
exchangeable random variables is close to being one. This theorem states roughly that, if $X_1, X_2,\ldots$ is an infinite exchangeable sequence of random variables defined in a Borel measure space $(\Xf, \Xc)$, then there exists a random variable $\theta$ in some space $\Theta$, where $\theta$ is distributed according to a probability distribution $G$, such that $X_1,X_2,\ldots$ are conditionally i.i.d. given $\theta$ \citep{aldous1985exchangeability,kallenberg2006probabilistic}.
In particular, let $f(\dd x|\theta)$ denote the conditional distribution of $X_i$ given $\theta$, we may express the joint distribution of an exchangeable $N$-sample $X_{[N]}:=(X_1,\ldots,X_N)$, for any $N\geq 1$, as a mixture of product distributions taking the following form: 
\begin{equation}
\label{eqn:definetti}
P(\dd x_1, \ldots, \dd x_N) = \int\prod_{n=1}^{N} f(\dd x_n|\theta) G(\dd \theta).
\end{equation}
In this representation, the probability measure $G$ is known as de Finetti's mixing measure. An important ramification of this representation theorem is that it provides a foundation for conditional i.i.d. sampling models, via the product distribution $\prod_{n=1}^{N}f(\dd x_n|\theta)$, and enables the Bayesian interpretation of the random parameter $\theta$, where the mixing measure $G$ plays the role of the a prior probability distribution, which expresses a Bayesian uncertainty about $\theta$, which parameterizes the unknown i.i.d. law $f(\dd x|\theta)$.

The second ramification of de Finetti's theorem, and one which we will expand on for the remainder of this article, stems from the interest in learning about de Finetti's mixing measure $G$ from observed data. In particular, one may treat $G$ as a probabilistic model for the assumed heterogeneity about data populations of exchangeable sequences. In order to obtain an estimate of the mixing measure $G$, one needs not one but multiple copies of the exchangeable sequences $X_{[N]}$. A simplest starting point is to assume that we are given $m$ \emph{independent} copies of exchangeable sequences denoted by $\{X_{[N]}^{i}\}_{i=1}^{m}$, where $i=1,\ldots,m$ provides the index for the $i$-th copy of the exchangeable sequence. We have arrived at a basic instance of a hierarchical model, in which the latent probability measure $G$ represents an object of inferential interest, and the connection of $G$ to the observed data is (equivalently to \eqref{eqn:definetti}) expressed via the customary two-stage hierarchical specification as follows
\begin{align}
\label{eqn:mixtureproduct2}
\theta_1,\ldots, \theta_m | G & \stackrel{iid}{\sim}  G, \\
X_1^{i},\ldots, X_{N}^{i} | \theta_i & \stackrel{iid}{\sim}  f(\cdot |\theta_i), \; i=1,\ldots, m. \nonumber
\end{align}
An important departure from the basic setting of a classical statistical theory is that we do not have just a single i.i.d. sample of a data population, but in fact there are $m$ such samples available. To be precise, we are given $m$  independent copies of $N$-sequences, each sequence is made of $N$ exchangeable observations. In total, there is a $m\times N$ data set.
Clearly, if $N=1$, i.e., each sampled sequence has only one observation, then we are reduced to a $m$-i.i.d. sample of observations, which are distributed according to the (marginal) distribution $P(\dd x_1) = \int f(\dd x_1|\theta) G(\dd \theta)$, the familiar mixture model's representation. Thus, one important aspect of a statistical theory for the mixture of product distributions is the identifiability of the mixing measure $G$, as well as the \emph{distinct} roles of data sample dimensions $m$ and $N$ on the efficiency of statistical estimation. When $G$ is a discrete probability measure with a known number of atoms, then a rather complete statistical theory can be had and will be described in subsection \ref{sec:mixproduct}. However, much remains open beyond this simple setting.

It is often the case that advances in novel hierarchical modeling happen at a much faster pace than the progress one can make regarding the theoretical behavior of inferential methods arising from such modeling tools. Continuing with our interest in learning from an $m\times N$ data set, however, in many applications there is no reason to restrict ourselves to assuming that the $m$ sequences $\{X_{[N]}^{i}\}_{i=1}^{m}$ are independent copies. A natural relaxation in the modeling is to posit that these $m$ sequences are exchangeable only. The theorem of de Finetti's continues to apply, which entails that the $m$ sequences of observations are conditionally i.i.d. --- each of the $m$ sequences may be associated with a mixing measure $G_i$, for $i=1,\ldots, m$, because it is made of exchangeable observations, where the $G_i$ are conditionally i.i.d. according to some distribution $\Dscr$, given $\Dscr$, which is itself random. We write $\Dscr \sim \Pi$. This hierarchical specification is an extension of the two-stage model given above:
\begin{align}
\label{eqn:hierarchical}
\Dscr & \sim \Pi, \nonumber \\
G_1,\ldots, G_m | \Dscr & \stackrel{iid}{\sim} \Dscr, \\
X_1^{i},\ldots, X_{N}^{i} | G_i & \stackrel{iid}{\sim}  \int f(\cdot |\theta) G_i(\dd \theta), \; i=1,\ldots, m.
\nonumber
\end{align}
Hierarchical models for an exchangeable collection of exchangeable sequences of observed data, such as the one just described, were eloquently motivated via de Finetti's theorem in \cite{blei2003latent}. A parametric instance of \eqref{eqn:hierarchical} is known as Latent Dirichlet allocation model, which was initially motivated from the analysis of corpora of text documents, and subsequently adopted in a variety of data domains. The LDA is colloquially known as a "bag-of-word" model, which highlights the modeling assumption that each sampled document is viewed as a "bag" of sampled words, which are exchangeable; a more complete description should evoke the image of a loosely made container to carry a large exchangeable collection of such bags. In population genetics, an equivalent probabilistic model was independently introduced and called "admixture model" \citep{pritchard2000inference}. Like the LDA, it is immensely popular. In the past two decades the LDA/ finite admixture model have inspired numerous extensions and modeling developments, especially in the field of Bayesian nonparametrics, with contributions from many authors, see \citep{Teh-etal-06,Rodriguez-etal-08,Nguyen-10,camerlenghi2019distribution,catalano2024unified} and additional references therein. The asymptotic theory on these more complex hierarchical models are less developed. In Section \ref{sec:hierarchical} we shall provide a brief overview and highlight how optimal transport continues to play important roles on such a theory.

\subsection{Mixture of product distributions}
\label{sec:mixproduct}

Now, we shall zoom in the mixture of product distributions setting introduced in \eqref{eqn:definetti} and \eqref{eqn:mixtureproduct2}, which may be viewed as a simplest instance of a hierarchical model. Recall that
the data is given as a collection of $m$ sequences denoted by $X^i_{[\m]}:= (X_1^i,\ldots,X_{\m}^i)$ for $i=1,\ldots,\n$. 
The sequences are composed of elements in a measurable space $(\Xfrak,\Acalcal)$. Examples include $\Xfrak = \R^d$, $\Xfrak$ is a discrete space, and $\Xfrak$ is a space of measures. 
In any case, in this section the latent de Finetti's mixing measure will be assumed to be a discrete probability measure
with a finite number of supporting atoms on $\Theta$, which is assumed to be a subset of $\Rb^q$. 
Let $\Ec_k(\Theta)$ stand for the space of all probability measures on $\Theta$ that have exactly $k$ atoms.
 

Write $P\otimes Q$ to be the product measure of $P$ and $Q$ and $\otimes^N P$ for the $N$-fold product of $P$. For each $N\geq 1$ the $N$-product probability kernel $P_{\theta,N}(\dd x_1,\ldots \dd x_n) :=\bigotimes_{n=1}^{\m}P_\theta(dx_n)$ represents the product measure on  $(\Xfrak^{\m},\Xc^{\m})$, where $\Xc^{\m}$ is the product sigma-algebra.  Without loss of generality, assume that the map 
$\theta\mapsto \P_\theta $ 
is injective; the probability kernel uniquely determines its parameter $\theta$. 
Suppose that the mixing measure $G \in \Ec_{k}(\Theta)$. Write $G=\sum_{i=1}^{k} p_i \delta_{\theta_i}$. 
Thus we may rewrite Eq.~\eqref{eqn:definetti} as follows,
\begin{equation}
\label{eqn:mixprod}
\P_{G,\m} = \sum_{i=1}^{k} p_i P_{\theta_i,\m},
\end{equation}
where the subscripts $G$ and $\m$ highlight the mixing measure, and the $N$-product distributional components. 
Now, given $m$ independent copies of exchangeable sequences $X^i_{[N]} = (X_1^i,\ldots,X_{\m}^i)$, for $i=1,\ldots,\n$, each of which is distributed according to $P_{G,N}$ given above, where the conditional probability kernel $f$ is given and known. The questions of interest is the identifiability of the mixing measure $G$, and how efficiently can one estimate $G$ from the $m\times N$ data set.  

As noted earlier, when $\m=1$, \eqref{eqn:mixprod} reduces to a standard finite mixture distribution $P_{G,1} = \sum_{j=1}^{k}p_j P_{\theta_j}$. For general $N$, it is expected that the larger $N$ is, the more information each data sequence contains, hence the more easily one may be able to identify and estimate the mixing measure $G$. 
We shall take up these issues in turn. 

\vspace{.1in}
\noindent
\underline{Identifiability} 
Before considering the question of convergence rate for estimating $G$, we need to make sure that $G$ is identifiable.
This question has occupied the interest of a number of authors ~\cite{teicher1967identifiability,elmore2005application,hall2005nonparametric}, with decisive results obtained by~\cite{allman2009identifiability} on finite mixture models for conditionally independent (but not necessarily identically distributed) observations. Note that our setting here is restricted to finite mixtures of conditionally i.i.d. observations, where identifiability results were obtained by \cite{vandermeulen2019operator}. In particular,
the model $\{\P_{G,\m}\}_G$ is said to be identifiable on $\Gc_k(\Theta)$ if the map $G\mapsto P_{G,\m}$ is injective on $\Gc_k(\Theta)$. As already discussed, one should expect the larger the $N$ is, the easier the identifiability holds.
The next theorem make this intuition precise.  

\begin{thm}
\label{thm:idenmixpro}
The model $\{\P_{G,\m}\}_G$ is identifiable on $\Gc_k(\Theta)$ as soon as $N \ge 2k-1$.
\end{thm}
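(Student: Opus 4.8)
The plan is to reduce the identifiability of the mixture of product distributions to the univariate finite mixture identifiability already available via the test-function machinery of Section \ref{sec:test}, using a separation-of-variables / tensor-rank argument. First I would fix $N \ge 2k-1$ and suppose, toward a contradiction, that $G = \sum_{i=1}^k p_i \delta_{\theta_i}$ and $G' = \sum_{j=1}^k p'_j \delta_{\theta'_j}$ in $\Gc_k(\Theta)$ satisfy $P_{G,N} = P_{G',N}$ as measures on $(\Xfrak^N, \Xc^N)$. Merging common atoms, I may write the signed measure $G - G' = \sum_{\ell=1}^{L} c_\ell \delta_{\eta_\ell}$ with the $\eta_\ell$ distinct and $L \le 2k$, and the hypothesis becomes $\sum_{\ell=1}^{L} c_\ell \, P_{\eta_\ell}^{\otimes N} = 0$. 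The key structural point is that this is a statement about a linear combination of rank-one tensors: writing $u_\ell := $ (the vector/function $x \mapsto$ density or measure $P_{\eta_\ell}$), the identity says $\sum_\ell c_\ell \, u_\ell^{\otimes N} = 0$. I would then invoke the fact that a set of $L$ \emph{pairwise linearly independent} rank-one symmetric tensors of order $N$ is linearly independent once $N \ge L - 1$ — this is the classical statement underlying Kruskal-type uniqueness and is exactly what \cite{vandermeulen2019operator} exploit for conditionally i.i.d. mixtures. Since the map $\theta \mapsto P_\theta$ is injective (assumed) and any two distinct probability measures $P_{\eta_\ell} \ne P_{\eta_{\ell'}}$ are automatically linearly independent (two distinct probability measures can never be proportional), the hypothesis $N \ge 2k-1 \ge L-1$ forces all $c_\ell = 0$, hence $G = G'$.

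More concretely, to make the tensor-independence step self-contained I would argue by a test-function / Vandermonde device: pick bounded measurable functions $h_1, \ldots, h_{L}$ on $\Xfrak$ such that the vectors $\big(\int h_r \, dP_{\eta_\ell}\big)_{r=1}^{L} \in \Rb^{L}$, for $\ell = 1, \ldots, L$, are pairwise non-proportional — this is possible precisely because the $P_{\eta_\ell}$ are pairwise distinct, and one can even take the $h_r$ from a fixed separating class (e.g. indicators, as in Example \ref{exa:divationbetweenmixturedistributions}). Integrating the identity $\sum_\ell c_\ell P_{\eta_\ell}^{\otimes N} = 0$ against product test functions $h_{r_1} \otimes \cdots \otimes h_{r_N}$ yields, for all multi-indices $(r_1, \ldots, r_N)$,
\begin{equation*}
\sum_{\ell=1}^{L} c_\ell \prod_{n=1}^{N} a_\ell^{(r_n)} = 0, \qquad a_\ell^{(r)} := \int h_r \, dP_{\eta_\ell}.
\end{equation*}
Grouping by the multiplicities with which each index value is used, and choosing the $h_r$ so that the scalar sequences $(a_\ell^{(1)} : a_\ell^{(2)} : \cdots)$ are genuinely distinct across $\ell$, this system is a generalized Vandermonde system in the $c_\ell$; having $N \ge L - 1$ distinct "nodes" and $L$ of them guarantees the only solution is $c_\ell \equiv 0$. (If one prefers, project to a single coordinate: there exists a bounded $h$ with $b_\ell := \int h\, dP_{\eta_\ell}$ all distinct; then testing against $h^{\otimes j} \otimes 1^{\otimes (N-j)}$ for $j = 0, 1, \ldots, L-1$ gives a square Vandermonde system in $(b_1, \ldots, b_L)$, which is invertible, so again $c_\ell = 0$.)

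The main obstacle is the step asserting the existence of test functions separating the $P_{\eta_\ell}$ in the \emph{strong} sense needed — i.e. not merely separating each pair, but producing scalar images $b_\ell$ that are \emph{simultaneously} all distinct (or, for the full tensor argument, producing vector images that are pairwise non-proportional in a way compatible with a Vandermonde structure). Pairwise distinctness of the $P_{\eta_\ell}$ gives, for each pair $(\ell, \ell')$, \emph{some} $h$ with $\int h\, dP_{\eta_\ell} \ne \int h\, dP_{\eta_{\ell'}}$; upgrading this to a single $h$ (or finite family) that works for all pairs at once requires a Baire-category or generic-perturbation argument over an appropriate function space, or else one reverts to the coordinate-free tensor-rank lemma and cites \cite{vandermeulen2019operator} directly. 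I would present the clean version via the cited operator/tensor result and relegate the elementary Vandermonde derivation to a remark, noting that the threshold $N = 2k-1$ is exactly $L-1$ in the worst case $L = 2k$, and that (as the sharpness discussions elsewhere in the paper suggest) this threshold is tight — for $N = 2k-2$ one can exhibit distinct $k$-atom mixing measures with the same $N$-fold product law.
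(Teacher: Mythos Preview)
The paper does not supply its own proof of this theorem; it simply attributes the result to \cite[Theorem~4.1]{vandermeulen2019operator} and moves on. Your proposal therefore cannot be compared line-by-line with an in-paper argument, but it is in the same spirit as the cited reference (a tensor-rank / operator-theoretic viewpoint) and, in its Vandermonde form, is correct and more elementary than a direct appeal to Kruskal-type machinery.

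Your self-identified ``main obstacle'' --- producing a \emph{single} bounded measurable $h$ with all $b_\ell = \int h\, dP_{\eta_\ell}$ pairwise distinct --- is not actually an obstacle, and you should not leave it hanging or defer to a citation. Consider the linear map $h \mapsto \bigl(\int h\, dP_{\eta_1}, \ldots, \int h\, dP_{\eta_L}\bigr)$ from bounded measurable functions to $\Rb^L$, and let $V$ denote its image. Because the $P_{\eta_\ell}$ are pairwise distinct, $V$ is not contained in any of the hyperplanes $H_{\ell\ell'} = \{x \in \Rb^L : x_\ell = x_{\ell'}\}$, so each $V \cap H_{\ell\ell'}$ is a proper linear subspace of $V$. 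A real vector space cannot be written as a finite union of proper subspaces, hence there exists $h$ whose image avoids every $H_{\ell\ell'}$, i.e.\ all $b_\ell$ are distinct. With this $h$ in hand, testing $\sum_\ell c_\ell P_{\eta_\ell}^{\otimes N} = 0$ against $h^{\otimes j}\otimes 1^{\otimes(N-j)}$ for $j = 0,\ldots,L-1$ (legitimate since $N \ge 2k-1 \ge L-1$) yields the square Vandermonde system $\sum_\ell c_\ell b_\ell^j = 0$, forcing $c_\ell \equiv 0$. This closes the argument cleanly without any Baire-category or generic-perturbation detour.
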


Results in more specific settings have been reported by various authors; the general theorem given this way is due to \cite[Theorem 4.1]{vandermeulen2019operator}. In fact, a stronger conclusion than identifiability can be drawn if taking more products. Let $\Gc_\infty(\Theta):=\cup_{\ell=1}^\infty \Ec_\ell(\Theta)$. 

\begin{thm}
\label{thm:idenmixpro2}
Fix $G\in \Gc_k(\Theta)$. When $N \ge 2k$, there does not exist a distinct $G' \in \Gc_\infty(\Theta) $ such that $\P_{G,\m}=\P_{G',\m}$. 
\end{thm}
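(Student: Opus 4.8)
The plan is to exploit the fact that $P_{G,N}$ determines, for every bounded measurable test function $h\colon\Xfrak\to\Rb$, the first $N$ moments of the one-dimensional push-forward of $G$ under $\theta\mapsto\int h\,\dd P_\theta$, and then to invoke the rigidity of measures with few atoms. Fix $G=\sum_{i=1}^{k_1}p_i\delta_{\theta_i}$ with $k_1\le k$ distinct atoms and suppose $G'=\sum_{j=1}^{k_2}q_j\delta_{\eta_j}\in\Gc_\infty(\Theta)$ satisfies $P_{G,N}=P_{G',N}$ with $N\ge 2k$; the claim is $G'=G$. (If $k_2\le k$ this is already contained in Theorem~\ref{thm:idenmixpro}, since $N\ge 2k\ge 2k-1$; the argument below handles all $k_2$ uniformly.) For a bounded measurable $h$ put $\varphi_h(\theta):=\int_{\Xfrak}h\,\dd P_\theta$ and let $G_h:=(\varphi_h)_*G$, $G'_h:=(\varphi_h)_*G'$ be the push-forwards onto $\Rb$, both finitely supported. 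For each $0\le j\le N$, integrating $\prod_{n=1}^{j}h(x_n)$ against $P_{G,N}=\sum_i p_i P_{\theta_i,N}$ and using conditional independence of the $N$ coordinates given $\theta$ gives
\[
\int \prod_{n=1}^{j}h(x_n)\,\dd P_{G,N}
=\sum_{i=1}^{k_1}p_i\Bigl(\int h\,\dd P_{\theta_i}\Bigr)^{j}
=\int t^{j}\,\dd G_h(t),
\]
and identically with $G'$ in place of $G$. Hence $P_{G,N}=P_{G',N}$ forces the first $N$ moments of $G_h$ and $G'_h$ to coincide, for every bounded measurable $h$.

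The key step is a rigidity property: a probability measure with at most $k$ atoms is uniquely determined, among all finitely supported probability measures on $\Rb$, by its moments of order $0,1,\dots,2k$. Applied to $G_h$ (which has at most $k$ atoms since $G$ does): writing its atoms as $t_1<\dots<t_r$ with $r\le k$ and $q(x):=\prod_{l=1}^{r}(x-t_l)$, the polynomial $q^2$ has degree $2r\le 2k\le N$, so $\int q^2\,\dd G'_h=\int q^2\,\dd G_h=0$ (the moments of order $\le N$ agree and $q$ vanishes on $\supp G_h$); nonnegativity of $q^2$ forces $G'_h$ to be supported on $\{t_1,\dots,t_r\}$ as well, and matching the moments of order $0,\dots,r-1$ yields an invertible Vandermonde system pinning down the weights, whence $G_h=G'_h$. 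It is precisely here that $N\ge 2k$ is used in full (one may have $r=k$, giving degree $2k$), and this is sharp: for $N=2k-1$ a $k$-atom $G$ can share its full product-law with a measure having $k+1$ atoms, which is why Theorem~\ref{thm:idenmixpro} must restrict $G'$ to $\Gc_k(\Theta)$ under the weaker threshold $2k-1$.

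Finally I would upgrade ``$G_h=G'_h$ for all bounded measurable $h$'' to ``$G=G'$''. If $G\ne G'$, then since $G-G'$ is a finite signed measure supported on the finite set $T:=\{\theta_1,\dots,\theta_{k_1}\}\cup\{\eta_1,\dots,\eta_{k_2}\}$, there is $\theta^\star\in T$ with $(G-G')(\{\theta^\star\})\ne0$. Using injectivity of $\theta\mapsto P_\theta$, for each $\theta\in T\setminus\{\theta^\star\}$ choose a bounded measurable $h_\theta$ (e.g.\ an indicator of a set separating $P_{\theta^\star}$ from $P_\theta$) with $\int h_\theta\,\dd P_{\theta^\star}\ne\int h_\theta\,\dd P_{\theta}$; a generic real linear combination $h:=\sum_{\theta}c_\theta h_\theta$ then has $\varphi_h(\theta^\star)\ne\varphi_h(\theta)$ for all $\theta\in T\setminus\{\theta^\star\}$ simultaneously (the coefficient vectors for which equality holds at some $\theta$ form a finite union of proper hyperplanes). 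For such an $h$ the push-forward $(\varphi_h)_*(G-G')$ assigns the nonzero value $(G-G')(\{\theta^\star\})$ to the singleton $\{\varphi_h(\theta^\star)\}$, since $G-G'$ lives on $T$ and no other point of $T$ maps there; hence $G_h\ne G'_h$, a contradiction. Therefore $G'=G$.

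The only genuinely delicate point is the middle step: recognizing that $P_{G,N}$ carries exactly the first-$N$ moments of all one-dimensional push-forwards of $G$, and that $2k$ moments are both necessary and sufficient to recover a measure with at most $k$ atoms among all finitely supported measures on the line (the nonnegativity-of-$q^2$ argument, and the matching sharp example for $2k-1$). The first and last steps — the Fubini/marginalization identity and the separation argument from injectivity of the kernel map — are routine bookkeeping.
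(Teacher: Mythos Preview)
Your proof is correct. The paper does not supply its own argument for this theorem but defers to \cite[Theorem~4.3]{vandermeulen2019operator}, whose framework is operator-theoretic (viewing $P_{G,N}$ as a rank-$k$ element in the $N$-fold tensor product of the space of measures, and invoking uniqueness of low-rank tensor decompositions). Your route is more elementary and entirely self-contained: you reduce to one-dimensional push-forwards via test functions, use the classical moment rigidity that a $k$-atom probability measure on $\Rb$ is determined among \emph{all} finitely supported probability measures by its first $2k$ moments (the $q^2$ argument), and then recover $G$ itself via a separation argument exploiting injectivity of $\theta\mapsto P_\theta$. This buys a proof that requires no tensor or operator machinery and makes the role of the threshold $2k$ (versus $2k-1$ in Theorem~\ref{thm:idenmixpro}) completely transparent: the extra moment is needed precisely because $G'$ may have more than $k$ atoms, so the Vandermonde argument for the difference $G_h-G'_h$ no longer suffices and one must force the support of $G'_h$ down to that of $G_h$ via the nonnegative polynomial $q^2$ of degree $2k$. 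The tensor approach, on the other hand, generalizes more readily to the conditionally-independent-but-not-identically-distributed setting (Kruskal-type results), which your test-function-and-moment method does not directly address.
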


The above result is reported in \cite[Theorem 4.3]{vandermeulen2019operator}, where the term ``determined'' is used. Moreover, if one additionally knows that the component probability measure are linear independent, then the products needed to guarantee identifiability may be significantly decreased. 

\begin{thm}\label{thm:idenmixpro3}
Fix $G=\sum_{i=1}^{k} p_i \delta_{\theta_i} \in \Ec_{k}(\Theta)$. Suppose that $P_{\theta_1},\ldots,P_{\theta_k}$ are linear independent. Then when $N \ge 3$, there does not exists a distinct $G'\in \Gc_k(\Theta)$ such that $\P_{G,\m}=\P_{G',\m}$. Moreover, when $N \ge 4$, there does not exists a distinct $G'\in \Gc_\infty(\Theta)$ such that $\P_{G,\m}=\P_{G',\m}$.
\end{thm}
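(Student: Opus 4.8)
The plan is to turn the measure identity $\P_{G,N}=\P_{G',N}$ into a finite‑dimensional statement about symmetric tensor decompositions, and then exploit the linear independence of the components $\mu_i:=P_{\theta_i}$. Writing $G'=\sum_{j=1}^{k'}p'_j\delta_{\theta'_j}\in\Gc_\infty(\Theta)$ and $\nu_j:=P_{\theta'_j}$ (distinct, since $\theta\mapsto P_\theta$ is injective), I would first marginalize onto the first $\ell$ coordinates, which gives, for every $1\le\ell\le N$,
\[
\sum_{i=1}^k p_i\,\mu_i^{\otimes\ell}\;=\;\sum_{j=1}^{k'}p'_j\,\nu_j^{\otimes\ell}.
\]
Inside the finite‑dimensional span $Y=\operatorname{span}\{\mu_i\}\cup\operatorname{span}\{\nu_j\}$ of signed measures, and using that linear independence of finitely many measures propagates to their tensor products (integrate against test functions on one factor at a time), so that $Y^{\otimes\ell}$ embeds into the signed measures on $\Xf^{\ell}$, each of these becomes a genuine identity of tensors and can be analyzed by linear algebra. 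The goal is to deduce $k'=k$ and $\{(\nu_j,p'_j)\}=\{(\mu_i,p_i)\}$, i.e.\ $G'=G$.

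Next I would record three elementary facts about a linearly independent family $v_1,\dots,v_r$ with weights $a_s\ne0$: (i) $\{v_s\otimes v_{s'}\}_{s,s'}$, hence $\{v_s^{\otimes2}\}_s$, is linearly independent; (ii) the operator $\xi\mapsto\sum_s a_s\langle v_s,\xi\rangle v_s$ has range exactly $\operatorname{span}\{v_s\}$; (iii) if moreover $a_s>0$, the form $\xi\mapsto\sum_s a_s\langle v_s,\xi\rangle^2$ is positive semidefinite with kernel $\{v_s\}^{\perp}$, so its rank equals $\dim\operatorname{span}\{v_s\}$. Applying (ii)–(iii) to the $\ell=2$ identity (a symmetric bilinear form on $Y^{*}$) on both sides shows at once that $V:=\operatorname{span}\{\mu_i\}=\operatorname{span}\{\nu_j\}$ has dimension $k$; in particular $k'\ge k$ and every $\nu_j\in V$.

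The crux is to upgrade this to the equality $U:=\operatorname{span}\{\mu_i^{\otimes2}\}=\operatorname{span}\{\nu_j^{\otimes2}\}$ inside $\operatorname{Sym}^2V$. For $N\ge3$ with $G'\in\Gc_k$ (so $k'\le k$), the previous step forces $k'=k$ and $\nu_1,\dots,\nu_k$ linearly independent; flattening the $\ell=3$ identity across modes $\{1\}\mid\{2,3\}$ and using (i)–(ii) for both decompositions identifies the operator's row space with $\operatorname{span}\{\mu_i^{\otimes2}\}$ and with $\operatorname{span}\{\nu_j^{\otimes2}\}$ simultaneously. For $N\ge4$ with $G'\in\Gc_\infty$ arbitrary, I would instead flatten the $\ell=4$ identity across $\{1,2\}\mid\{3,4\}$: the resulting form on $(\operatorname{Sym}^2 Y)^{*}$ is positive semidefinite, so by (iii) its kernel equals $\{\mu_i^{\otimes2}\}^{\perp}$ and $\{\nu_j^{\otimes2}\}^{\perp}$ at the same time, giving $U$-equality with no restriction on $k'$. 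Finally, by (i) the common span $U$ is exactly the ``diagonal'' subspace in the basis $\{\mu_i\otimes\mu_{i'}\}$ of $\operatorname{Sym}^2V$: if $w=\sum_i b_i\mu_i\in V$ has $w^{\otimes2}\in U$ then $b_ib_{i'}=0$ for $i\ne i'$, so $w$ is proportional to a single $\mu_i$. Applying this to each $\nu_j$ (which lies in $V$ with $\nu_j^{\otimes2}\in U$) and using that all the $\nu_j,\mu_i$ are probability measures yields $\nu_j=\mu_{\tau(j)}$ for an injection $\tau$, hence $k'\le k$, hence $k'=k$ with $\tau$ a bijection; the $\ell=1$ identity together with linear independence of $\{\mu_i\}$ then matches the weights, so $G'=G$.

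I expect the main obstacle to be Step establishing the equality of the spans of squares without knowing in advance that the $\nu_j$ are linearly independent: with only $N\ge3$ one genuinely needs the hypothesis $k'\le k$ (so that the $\nu_j$ form a basis of $V$ and a matricization of the $3$‑tensor recovers their full span of squares), while the passage to $N\ge4$ is precisely what makes a positive‑semidefinite matricization available and thereby removes any bound on $k'$. The remaining items — propagation of linear independence to tensor powers, the embedding $Y^{\otimes\ell}\hookrightarrow$ measures, and the normalization argument turning ``proportional'' into ``equal'' — are routine.
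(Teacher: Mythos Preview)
Your argument is correct. The paper does not actually prove this theorem; it records it as a combination of Theorems~4.5 and~4.6 of \cite{vandermeulen2019operator}, whose operator-theoretic framework is precisely the tensor-decomposition strategy you carry out here: extract the common component span $V$ from the $\ell=2$ identity via a positive-semidefinite rank argument, upgrade this to equality of the spans of tensor squares through a higher-order flattening (column space for $N\ge3$ when both families are bases of $V$, PSD kernel for $N\ge4$ when the $\nu_j$ side may be overcomplete), and then read off that each $\nu_j^{\otimes2}$ lies on the diagonal of $V\otimes V$, forcing $\nu_j=\mu_{\tau(j)}$. One cosmetic remark: you state fact~(iii) for a linearly independent family but in the $N\ge4$ step invoke it for $\{\nu_j^{\otimes2}\}$ before independence is known; the conclusion $\ker=\{v_s\}^{\perp}$ needs only $a_s>0$, so nothing is lost, but it would be cleaner to drop the independence hypothesis from~(iii).
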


The above result is a combination of  \cite[Theorems 4.5 and 4.6]{vandermeulen2019operator}. The above identifiability results are also known to correspond to the notion of "strict identifiability", as opposed to "generic identifiability", i.e., identifiability of the parameter $G$ up to a measure-zero set. For the latter notion of identifiability and additional
results for general mixture of product distributions, which may be obtained by applying Kruskal’s theorem, see \cite{allman2009identifiability}.

\vspace{.1in}
\noindent
\underline{Inverse bounds} 
In parallel to the approach and results described in the previous sections, it is possible to establish a collection of inverse bounds for mixtures of product distributions, which will prove to be a useful tool for obtaining a precise characterization the convergence behavior of estimates for the mixing measure $G$. 
Here, we consider the case in which $G \in \Ec_{k}(\Theta)$, where $k$ is known.

Before proceeding further, let us pause and consider what kind of convergence behavior that we anticipate for the mixing measure $G$, given the $m\times N$ data. $G$ is composed of $k$ atoms $\theta_i$, each of which is associated with the mixing probabilities $p_i$, for $i=1,\ldots, k$. Because each sampled sequence will be an i.i.d. sample from one of the product distributions $P_{\theta_i,N}$. If $N$ increases we should expect a reduced estimation error regarding the product distribution components, which should help with the estimation of one of the atoms $\theta_i$, for $i=1,\ldots, k$. In a parametric setting, we anticipate a rate that decreases at the order of $N^{-1/2}$. However, we do not expect that increasing $N$ while keeping $m$ fixed will help to drive down the estimation error for the mixing probabilities $p_i$ at all.
That would be the case, only if $m$ also increases, so that each of the $k$ components will receive their shares of representative sampled sequences.
In other words, we anticipate the asymmetric roles that sample size $m$ and $N$ play in the estimation of different parameters of the model, with the atoms benefit more from the sequence length $N$. 

To emphasize the role of $N$ in the denominator of the inverse bound, a more useful refinement than the standard Wasserstein distance $W_r$ is the following. For any $G=\sum_{i=1}^kp_i\delta_{\theta_i} \in \Ec_k(\Theta)$ and $G'=\sum_{i=1}^kp'_i\delta_{\theta'_i} \in \Ec_k(\Theta)$, define
 $$
 \myD_{\m}(G,G') =  \min_{\tau\in S_{k}} \sum_{i=1}^{k}(\sqrt{\m}\|\theta_{\tau(i)}-\theta'_i\|_2+|p_{\tau(i)}-p'_i|) 
 $$ 
 where $S_{k}$ denote all the permutations on the set $[k]$.

 It is simple to verify that $\myD_{\m}(\cdot,\cdot)$ 
is a valid metric on $\Ecal_{k}(\Theta)$ for each $\m$ and is equivalent to a suitable optimal transport distance metric. Indeed, $G = \sum_{i=1}^{k}p_i \delta_{\theta_i} \in \Ec_k(\Theta)$, due to the permutations invariance of its atoms, can be identified as a set $\{ (\theta_i,p_i): 1\leq i \leq k\}$, which can further be identified as $\tilde{G} = \sum_{i=1}^{k}\frac{1}{k} \delta_{(\theta_i,p_i)}\in \Ec_{k}(  \Theta \times \R)$. Formally, we define a map $\Ec_k(\Theta)\to \Ec_k( \Theta\times \R) $ by
\begin{equation}
G = \sum_{i=1}^{k}p_i \delta_{\theta_i} \mapsto \tilde{G} = \sum_{i=1}^{k}\frac{1}{k} \delta_{(\theta_i,p_i)}\in \Ec_{k}( \Theta \times \R). \label{eqn:a}
\end{equation}
Now, endow $ \Theta \times \R$ with a metric $M_{\m}$ defined by $M_{\m}((\theta,p), (\theta',p')) = \sqrt{\m}\|\theta-\theta'\|_2+|p-p'|$ and note the following fact.

\begin{lem}\label{lem:rotationinvariance}
	For any $ G=\sum_{i=1}^k\frac{1}{k}\delta_{\bar{\theta}_i},G'=\sum_{i=1}^k\frac{1}{k}\delta_{\bar{\theta}'_i} \in \Ec_k(\bar{\Theta})$ and distance $d_{\bar{\Theta}}$ on $\bar{\Theta}$, 
	$$W_p^p(G,G'; d_{\bar{\Theta}}) =  \min_{\tau\in S_{k}} \frac{1}{k}\sum_{i=1}^k d_{\bar{\Theta}}^p(\theta_i, \theta'_{\tau(i)}).$$
\end{lem}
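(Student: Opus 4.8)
The plan is to recognize the quantity $\min_{\tau\in S_k}\frac1k\sum_i d_{\bar\Theta}^p(\bar\theta_i,\bar\theta'_{\tau(i)})$ as the optimal value of a linear program over the Birkhoff polytope, and then to invoke the Birkhoff--von Neumann theorem (this is a classical fact, cf.\ \cite{Villani-08}).

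First I would set up the coupling--matrix correspondence. Since $G=\sum_{i=1}^k\frac1k\delta_{\bar\theta_i}$ and $G'=\sum_{i=1}^k\frac1k\delta_{\bar\theta'_i}$ are discrete, every $\kappa\in\TPlan(G,G')$ is concentrated on $\{\bar\theta_i\}_{i\in[k]}\times\{\bar\theta'_j\}_{j\in[k]}$, hence is completely determined by the nonnegative matrix $\pi=(\pi_{ij})_{i,j\in[k]}$ with $\pi_{ij}:=\kappa(\{\bar\theta_i\}\times\{\bar\theta'_j\})$. The marginal constraints $\kappa(\cdot\times\bar\Theta)=G$ and $\kappa(\bar\Theta\times\cdot)=G'$ force $\sum_{j}\pi_{ij}=\tfrac1k$ for every $i$ and $\sum_{i}\pi_{ij}=\tfrac1k$ for every $j$, i.e.\ $k\pi$ is doubly stochastic; conversely every doubly stochastic matrix $B$ yields a valid coupling via $\pi=B/k$. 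Under this identification the transport cost becomes
\[
\int d_{\bar\Theta}^p\,\dd\kappa=\sum_{i,j\in[k]}\pi_{ij}\,d_{\bar\Theta}^p(\bar\theta_i,\bar\theta'_j)=\frac1k\sum_{i,j\in[k]}B_{ij}\,d_{\bar\Theta}^p(\bar\theta_i,\bar\theta'_j),
\]
which is a linear functional of $B$ over the compact convex polytope $\mathcal{B}_k$ of $k\times k$ doubly stochastic matrices.

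Next I would carry out the minimization. A continuous linear functional on a polytope attains its minimum at an extreme point, and by the Birkhoff--von Neumann theorem the extreme points of $\mathcal{B}_k$ are exactly the permutation matrices $P_\tau$, $\tau\in S_k$, where $(P_\tau)_{ij}=1_{\{j=\tau(i)\}}$. Substituting $B=P_\tau$ gives cost $\frac1k\sum_{i\in[k]}d_{\bar\Theta}^p(\bar\theta_i,\bar\theta'_{\tau(i)})$, whence
\[
W_p^p(G,G';d_{\bar\Theta})=\inf_{\kappa\in\TPlan(G,G')}\int d_{\bar\Theta}^p\,\dd\kappa=\min_{B\in\mathcal{B}_k}\frac1k\sum_{i,j}B_{ij}\,d_{\bar\Theta}^p(\bar\theta_i,\bar\theta'_j)=\min_{\tau\in S_k}\frac1k\sum_{i=1}^k d_{\bar\Theta}^p(\bar\theta_i,\bar\theta'_{\tau(i)}),
\]
which is the asserted identity.

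There is no substantive obstacle: the only two points deserving a line of care are the bijection between couplings of two uniform $k$-atom measures and scaled doubly stochastic matrices, and the appeal to Birkhoff--von Neumann for the vertex structure of $\mathcal{B}_k$. The argument never uses distinctness of the support points, so it applies verbatim even if some $\bar\theta_i$ (or $\bar\theta'_i$) coincide; in any case $\Ec_k(\bar\Theta)$ consists of measures with exactly $k$ distinct atoms, so distinctness holds automatically.
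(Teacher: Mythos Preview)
Your proof is correct and follows the standard route via the Birkhoff--von Neumann theorem. The paper does not actually supply a proof of this lemma; it simply cites \cite[Proposition 2]{nguyen2011wasserstein} and \cite[Lemma 3.1]{wei2022convergence}, and your argument is precisely the classical one those references use.
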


The above characterization of Wasserstein distance for equally-weighted discrete distributions is avaialbe in \cite[Proposition 2]{nguyen2011wasserstein} and \cite[Lemma 3.1]{wei2022convergence}.  By applying Lemma \ref{lem:rotationinvariance} with $\bar{\Theta}$, $d_{\bar{\Theta}}$ replaced respectively by $\Theta\times \R$ and  $M_\m$, then for any $ G,G' \in \Ec_k(\Theta)$, $W_1(\tilde{G},\tilde{G'}; M_{\m})= \frac{1}{k} D_{\m}(G,G') $, which validates that $D_{\m} $ is indeed a metric on $\Ec_k(\Theta)$, and moreover it does not depend on the specific representations of $G$ and $G'$. It is  straightforward that $W_1(G,G_0)\asymp D_1(G,G_0)$ in a small neighborhood of $G_0$.

Due to the nature of the product structure, it is of interest to investigate the smallest number of products such that identifiability or inverse bounds hold.    
Therefore define, for any $G_0 \in \Ec_{k_0}(\Theta^\circ)$, where $\Theta^\circ$ denotes the interior of $\Theta$, $ H \in \cup_{k=1}^\infty \Ec_k(\Theta)$ and $\Hc\subset \cup_{k=1}^\infty \Ec_k(\Theta)$, 

     \begin{align*}
      \label{eqn:defn0n1n2}
      & n_0 :=  n_0(H, \Hc ) \\
      & :=  \min \biggr \{n \geq 1 \biggr | \forall G \in \Hc \setminus \{H\}, P_{G,n} \neq P_{H,n} \biggr \} ,
    \\
    & n_1 := n_1(G_0)  := n_1(G_0,\Ec_{k_0}(\Theta)) \\
    & :=  \min \biggr \{n \geq 1 \biggr | \liminf_{\substack{G\overset{W_1}{\to} G_0 \nonumber \\ G\in \Ecal_{k_0}(\Theta) }} \frac{V(P_{G,n },P_{G_0,n })}{\myD_1(G,G_0)}>0 \biggr \}. 
    \nonumber
     \end{align*}
$n_0$ is called minimal zero-order identifiable length (with respect to $H$ and $\Hc$); 
$n_1$ is called minimal first-order identifiable length (with respect to $G_0$ and $\Ec_{k_0}(\Theta)$). 
Since $W_1(G,G_0)\asymp D_1(G,G_0)$ in small neighborhood of $G_0$, 
the two metrics ar interchangeable in the denominator of the above definition for $n_1$. 
When it is clear from the context, we may use $n_1$ or $n_1(G_0)$ for $n_1(G_0,\Ec_{k_0}(\Theta))$ for brevity. Similar rules apply to $n_0$. 
The next example illustrates these concepts and the usefulness of overcoming weak identifiability in mixture models by taking products. 

\begin{exa}[Continuation on two-parameter gamma kernel] 
\label{exa:gamma2} 
Consider the gamma distribution $f(x|\alpha,\beta)$ discussed in Example \ref{exa:gamma}. Let $k_0\geq 2$ and recall that $\Gc$ denotes the pathological subset of the gamma mixture's parameter space, such that for any $G_0 \in \mathcal{G}$
$$
\liminf_{\substack{G\overset{W_1}{\to} G_0\\ G\in \Ec_{k_0}(\Theta)}} \frac{h(P_G,P_{G_0})}{D_1(G,G_0)}=\liminf_{\substack{G\overset{W_1}{\to} G_0\\ G\in \Ec_{k_0}(\Theta)}} \frac{V(P_G,P_{G_0})}{D_1(G,G_0)}=0.
$$
This means $n_1(G_0)\geq 2$ for $G_0\in \mathcal{G}$.   
In fact it is shown by \cite[Example 5.11]{wei2022convergence} that $n_1(G_0)= 2$ for $G_0\in \mathcal{G}$. That is, by taking reparative product of $2$, the weak identifiability for mixture of Gamma distribution is overcome. It also follows from Lemma \ref{lem:inverseboundfixoneargumentexactfitted} that for any $G_0\in \Ec_{k_0}(\Theta)\backslash \mathcal{G}$, $n_1(G_0)=1$. 
One can show that $\{f(x|\theta_i)\}_{i=1}^k$ are linear independent for any distinct $\theta_1,\ldots,\theta_k\in \Theta$ for any $k$. The linear independence immediately implies that $p_G$ is identifiable on $\Gc_\infty(\Theta)$, i.e. for any $G\in \Ec_k(\Theta)$ and any $G'\in \Ec_{k'}(\Theta)$, $P_G\not = P_{G'}$. Thus, $n_0(G,\Gc_\infty(\Theta))=1$ 
for any $G\in \Gc_\infty(\Theta)$. \myeoe 
\begin{comment}
\YW{Add mixture identifiability?} \LN{sure, since you are at it}
\end{comment} 
\end{exa}

\begin{comment}
The following proposition provides the link between classical identifiability and strong notions of local identifiability provided either~\eqref{eqn:genthmcon} or~\eqref{eqn:curvatureprodbound} holds. In a nutshell, as $\m$ gets large, the two types of identifiability can be connected by the force of the central limit theorem applied to product distributions, which is one of the key ingredients in the proof of the inverse bounds.
%
%
Define two related and useful quantities: for any $G_0\in \Ec_{k_0}(\Theta^\circ)$
\begin{align}
\label{eqn:Nbar}
\underbar{\m}_1 := \underbar{\m}_1(G_0) := \min \biggr \{n \geq 1 \biggr | \inf_{N\geq n} \liminf_{\substack{G\overset{W_1}{\to} G_0\\ G\in \Ecal_{k_0}(\Theta) }} \frac{V(P_{G,\m },P_{G_0,\m })}{\myD_{\m}(G,G_0)}>0 \biggr \} \\
\label{eqn:N2bar}
\underbar{\m}_2  := \underbar{\m}_2(G_0) :=   \min \biggr \{n \geq 1 \biggr | \inf_{\m\geq n} \lim_{r\to 0}\ \inf_{\substack{G, H\in B_{W_1}(G_0,r)   \\ G\not = H }} \frac{V(\P_{G,\m},\P_{H,\m})}{\myD_{\m}(G,H)}>0 \biggr \}.
\end{align}
Note that~\eqref{eqn:genthmcon} means $\underbar{\m}_1(G_0) < \infty$, while~\eqref{eqn:curvatureprodbound} means $\underbar{\m}_2(G_0) < \infty$. 
\end{comment}

Finally let us present the desired inverse bound for mixtures of product distributions, which emphasizes the dependence on $N$: 
for a fixed $G_0 \in \Ecal_{k_0}(\Theta^\circ)$ there holds
\begin{equation}
\liminf_{\m\to \infty}\liminf_{\substack{G\overset{W_1}{\to} G_0\\ G\in \Ecal_{k_0}(\Theta) }} \frac{V(\P_{G,\m},\P_{G_0,\m})}{\myD_{\m}(G,G_0)} > 0. \label{eqn:genthmcon}
\end{equation}
%
\begin{comment}
By contrast, an easy upper bound on the left side of~\eqref{eqn:genthmcon} holds generally (cf. Lemma~\ref{lem:VD1liminfuppbou}):
\begin{equation}
\sup_{\m \geq 1}\liminf_{\substack{G\overset{W_1}{\to} G_0\\ G\in \Ecal_{k_0}(\Theta) }} \frac{V(\P_{G,\m},\P_{G_0,\m})}{\myD_{\m}(G,G_0)} \leq 1/2. \label{eqn:liminfliminfVDNup}
\end{equation}
\end{comment}
In fact, a strong inverse bound can also be established:
\begin{equation}
\liminf_{\m\to\infty} \ \liminf_{\substack{G\neq H\in \Ec_{k_0}(\Theta)\\ G, H \overset{W_1}{\to} G_0 }} \frac{V(\P_{G,\m},\P_{H,\m})}{\myD_{\m}(G,H)}>0. \label{eqn:curvatureprodbound}
\end{equation}
It's easy to see that \eqref{eqn:curvatureprodbound} implies \eqref{eqn:genthmcon}. 
Next we present some probability kernels such that the above inverse bounds hold. 

\vspace{.1in}
\noindent
\underline{Probability kernels in exponential family}
%
Exponential families of distributions represent one of the most common building blocks in a statistical modeler's toolbox.  We now present results establishing inverse bounds for the mixture of products of exponential family distributions. 

\begin{comment}
, where we switch the notation from $\theta$ to $\eta$. \LN{This change of notation is a bit distracting and not necessary in my opinion...}{\color{purple} I think it will come handy given the fact that exponential usually has the  two different set of parameters. $\eta$ is convenient for proving theorems while the usual parameter, which will be denoted by $\theta$, is for common use.}
\end{comment}

\begin{thm}
\label{thm:expfam} 
Suppose that the probability kernel has a density function $f(x|\theta)$ in a full rank exponential family, given in its canonical form $f(x|\thetaeta)=\exp(\langle \thetaeta,T(x)\rangle -A(\thetaeta))h(x)$ with $\thetaeta\in \Theta$, the natural parameter space.
For any $G_0\in \Ec_{k_0}(\Theta^\circ)$, \eqref{eqn:genthmcon} and \eqref{eqn:curvatureprodbound} hold.
\end{thm}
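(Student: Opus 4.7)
My plan is to establish the stronger inverse bound \eqref{eqn:curvatureprodbound}, which implies \eqref{eqn:genthmcon}. The strategy rests on a CLT-type reduction to Gaussian mixtures that is particularly clean for exponential families. Writing
$\prod_{j=1}^N f(x_j|\theta)=\exp(N\langle\theta,\bar T_N\rangle-NA(\theta))\prod_j h(x_j)$
with $\bar T_N:=\tfrac1N\sum_j T(x_j)$, and using the local CLT that under $P_{\theta,N}$ the rescaled sufficient statistic $\sqrt N(\bar T_N-\nabla A(\theta))$ converges in total variation to $\mathcal N(0,\Sigma(\theta))$ with $\Sigma(\theta)=\nabla^2 A(\theta)$ positive definite (by the full-rank assumption), one can pass from $N$-fold product densities at scale $1/\sqrt N$ to Gaussian densities on sufficient-statistic space, a family for which first-order strong identifiability is automatic via Lemma~\ref{eqn:locationmixture}.

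I argue by contradiction: if the double liminf were zero, a diagonal extraction produces $N_n\to\infty$ and pairs $G_n\neq H_n\in\Ec_{k_0}(\Theta)$ with $G_n,H_n\overset{W_1}{\to}G_0=\sum_{i=1}^{k_0}p_i^0\delta_{\theta_i^0}$ satisfying $V(\P_{G_n,N_n},\P_{H_n,N_n})/\myD_{N_n}(G_n,H_n)\to 0$. Since $G_n,H_n$ each have exactly $k_0$ atoms converging to the $k_0$ distinct atoms of $G_0$, after relabeling there is a bijective matching: write $G_n=\sum_i p_i^{(n)}\delta_{\theta_i^{(n)}}$, $H_n=\sum_i q_i^{(n)}\delta_{\vartheta_i^{(n)}}$ with $\theta_i^{(n)},\vartheta_i^{(n)}\to\theta_i^0$ and $p_i^{(n)},q_i^{(n)}\to p_i^0$, so that eventually $\myD_{N_n}(G_n,H_n)=\sum_i(\sqrt{N_n}\|\theta_i^{(n)}-\vartheta_i^{(n)}\|+|p_i^{(n)}-q_i^{(n)}|)$. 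The images $\nabla A(\theta_i^0)$ are distinct in $\R^q$ (injectivity of $\nabla A$ on the natural parameter space), so the surrogate Gaussians around them, of width $O(1/\sqrt{N_n})$, have asymptotically disjoint supports, and $V(\P_{G_n,N_n},\P_{H_n,N_n})$ splits, up to $o(\myD_{N_n}(G_n,H_n))$, into a sum of cluster-local contributions $V_i^{(n)}$.

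Within each cluster $i$, set $u_i^{(n)}=\sqrt{N_n}(\theta_i^{(n)}-\theta_i^0)$, $v_i^{(n)}=\sqrt{N_n}(\vartheta_i^{(n)}-\theta_i^0)$, $a_i^{(n)}=p_i^{(n)}-p_i^0$, $b_i^{(n)}=q_i^{(n)}-p_i^0$, and $\Sigma_i=\nabla^2 A(\theta_i^0)$. A local CLT in total variation, uniform over $1/\sqrt{N_n}$-neighborhoods of $\theta_i^0$, replaces the two cluster-local product densities by Gaussian surrogates $\phi_{\Sigma_i}(S-\Sigma_i u_i^{(n)})$ and $\phi_{\Sigma_i}(S-\Sigma_i v_i^{(n)})$ in the rescaled coordinate $S=\sqrt{N_n}(\bar T_{N_n}-\nabla A(\theta_i^0))$, with $o(1)$ TV error. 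In the regime where $u_i^{(n)},v_i^{(n)}$ stay bounded, a first-order Taylor expansion in the shift shows that $V_i^{(n)}$ equals, to leading order, the $L^1$-norm of
\begin{equation*}
(a_i^{(n)}-b_i^{(n)})\,\phi_{\Sigma_i}(S-\Sigma_i u_i^{(n)})\;-\;p_i^0\,\bigl\langle\Sigma_i(u_i^{(n)}-v_i^{(n)}),\,\nabla\phi_{\Sigma_i}(S-\Sigma_i u_i^{(n)})\bigr\rangle.
\end{equation*}
By Lemma~\ref{eqn:locationmixture} applied to $g=\phi_{\Sigma_i}$, the pair $\{\phi_{\Sigma_i},\nabla\phi_{\Sigma_i}\}$ is linearly independent at a single location, so this $L^1$-norm is bounded below by a positive constant (depending on $p_i^0$ and $\Sigma_i$) times $(|a_i^{(n)}-b_i^{(n)}|+\|u_i^{(n)}-v_i^{(n)}\|)$. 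The complementary regime, in which $\|u_i^{(n)}-v_i^{(n)}\|\to\infty$ inside some cluster, is dispatched by noting that the two Gaussian surrogates are then asymptotically mutually singular and $V_i^{(n)}$ is bounded below directly by $\min(p_i^{(n)},q_i^{(n)})\to p_i^0>0$, which dominates any polynomial-in-$\|u-v\|$ contribution to $\myD_{N_n}$. Summing the cluster-wise lower bounds yields $V(\P_{G_n,N_n},\P_{H_n,N_n})\gtrsim\myD_{N_n}(G_n,H_n)$, contradicting $V/\myD_{N_n}\to 0$.

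The main obstacle is precisely the quantitative uniformity of the local CLT: one needs an Edgeworth-type TV approximation whose remainder is $o(1)$ uniformly for $\theta$ in a $1/\sqrt{N_n}$-neighborhood of each $\theta_i^0\in\Theta^\circ$, with $\Sigma(\theta)\succ 0$ bounded away from singularity. Such uniform bounds are standard on the interior of the natural parameter space of a full-rank exponential family, where the cumulant generating function $A$ is smooth and strictly convex. An exponential-family version of the full argument was executed in detail in \cite{nguyen2016borrowing,wei2022convergence}.
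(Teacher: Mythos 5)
Your skeleton coincides with the one used in the source of this theorem (\cite{wei2022convergence}): argue by contradiction, reduce to the sufficient statistic $\bar T_N$, rescale by $\sqrt N$, invoke the central limit theorem to pass to a Gaussian location family, and conclude from the linear independence of a Gaussian density and its gradient. However, the technical vehicle you chose has a genuine gap. You rely on a \emph{total-variation} local CLT for $\sqrt N(\bar T_N-\nabla A(\theta))$, uniformly over $O(1/\sqrt N)$-neighborhoods of each $\theta_i^0$, and assert that such bounds are ``standard'' on $\Theta^\circ$. They are false for lattice-valued sufficient statistics: for Bernoulli, Poisson, or multinomial kernels --- all full-rank exponential families covered by the theorem (and explicitly intended by the authors) --- the law of $\sqrt N(\bar T_N-\nabla A(\theta))$ is purely atomic, hence mutually singular with every Gaussian, and its total-variation distance to $\mathcal N(0,\Sigma(\theta))$ equals $1$ for all $N$. (By Prokhorov's criterion, TV convergence in the CLT requires the normalized sum to eventually acquire an absolutely continuous component.) Consequently your replacement of the cluster-local product densities by Gaussian surrogates ``with $o(1)$ TV error'' fails exactly where the theorem is most interesting. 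The argument in \cite{wei2022convergence} avoids this by lower-bounding $V(\P_{G,N},\P_{H,N})$ through its evaluation on fixed Borel sets (equivalently, against bounded continuous test functions of the rescaled statistic) and then invoking only \emph{weak} convergence, which the ordinary CLT supplies for every full-rank family on $\Theta^\circ$; the limit is still a signed combination of $\phi_{\Sigma_i}$ and $\nabla\phi_{\Sigma_i}$, and the linear-independence step you describe then goes through.

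A second, smaller problem is your ``complementary regime.'' If $\|u_i^{(n)}-v_i^{(n)}\|=\sqrt{N_n}\|\theta_i^{(n)}-\vartheta_i^{(n)}\|\to\infty$, then $\myD_{N_n}(G_n,H_n)\to\infty$ while $V\le 1$, so a lower bound $V_i^{(n)}\gtrsim p_i^0$ does \emph{not} dominate the denominator --- the ratio still tends to $0$, and your claimed contradiction evaporates. The correct way out is to note that this regime cannot occur under a proper reading of the iterated liminfs in \eqref{eqn:curvatureprodbound}: the inner liminf at fixed $N$ is taken over $W_1$-balls of radius $r\downarrow 0$, and since $\inf_{B_{W_1}(G_0,r)}$ is monotone in $r$, one may extract the contradicting pairs $G_n,H_n$ from balls of radius $r_n=o(N_n^{-1/2})$, which forces all rescaled atom displacements $u_i^{(n)},v_i^{(n)}$ to remain bounded (indeed to vanish). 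You should make this choice of $r_n$ explicit rather than attempt to handle a divergent-separation case that the statement does not actually permit.
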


In the last theorem the exponential family is in its canonical form. The following theorem extends to exponential families in the general form under mild conditions. 

\begin{thm}
\label{cor:expfamtheta}
Consider the probability kernel that has a density function $f(x|\theta)$ in a full rank exponential family,  
$f(x|\theta)=\exp\left(\langle \eta(\theta),T(x)\rangle -B(\theta) \right)h(x),$
where the map $\eta:\Theta \to \eta(\Theta)\subset \R^q$ is a homeomorphism. Suppose that $\eta$ is continuously differentiable on $\Theta^\circ$ and its the Jacobian is of full rank on $\Theta^\circ$. Then, for any $G_0\in \Ec_{k_0}(\Theta^\circ)$, ~\eqref{eqn:genthmcon} and~\eqref{eqn:curvatureprodbound} hold.
\end{thm}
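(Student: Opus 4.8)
The plan is to reduce Theorem~\ref{cor:expfamtheta} to the canonical case already treated in Theorem~\ref{thm:expfam}, via the reparametrization $\eta$. Since the homeomorphism $\eta$ is injective, the pushforward $\eta_{*}G := \sum_{i=1}^{k}p_i\delta_{\eta(\theta_i)}$ maps $\Ec_k(\Theta)$ into $\Ec_k(\eta(\Theta))$ and preserves the number of atoms. Writing $A:=B\circ\eta^{-1}$, the canonical kernel $\tilde f(x\mid\zeta):=\exp(\langle\zeta,T(x)\rangle-A(\zeta))h(x)$ satisfies $f(x\mid\theta)=\tilde f(x\mid\eta(\theta))$, so the $N$-fold product mixtures coincide, $\P_{G,\m}=\tilde\P_{\eta_{*}G,\m}$, and hence
\[
V(\P_{G,\m},\P_{H,\m})=V(\tilde\P_{\eta_{*}G,\m},\tilde\P_{\eta_{*}H,\m})
\]
for all $G,H$ and all $\m\ge 1$. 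The family $\tilde f$ is again full rank, since the full-rank condition is a property of $(T,h)$ intrinsic to the model and is unaffected by reparametrization; its natural parameter space $\mathcal N$ contains $\eta(\Theta)$.

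First I would check that $\eta_{*}G_0\in\Ec_{k_0}(\mathcal N^{\circ})$, so that Theorem~\ref{thm:expfam} is applicable at $\eta_{*}G_0$. Because $\Theta^{\circ}$ is open in $\Rb^{q}$ and $\eta$ is $C^{1}$ on $\Theta^{\circ}$ with everywhere full-rank Jacobian, the inverse function theorem makes $\eta$ an open map on $\Theta^{\circ}$; thus $\eta(\Theta^{\circ})$ is open in $\Rb^{q}$, and being contained in $\mathcal N$ it lies in $\mathcal N^{\circ}$. Since $G_0$ has $k_0$ distinct atoms in $\Theta^{\circ}$ and $\eta$ is injective, $\eta_{*}G_0\in\Ec_{k_0}(\mathcal N^{\circ})$.

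Next I would establish that $\myD_{\m}$ is comparable in the two parametrizations throughout a $W_1$-neighborhood of $G_0$, with constants independent of $\m$. For each atom $\theta_i^{0}$ of $G_0$, the inverse function theorem gives a compact neighborhood $U_i\subset\Theta^{\circ}$ on which $\eta$ is a $C^{1}$ diffeomorphism onto its image, whence $\eta$ and $\eta^{-1}$ are Lipschitz there; so there exist $0<c\le C<\infty$ depending only on $G_0$ with $c\|\theta-\theta'\|\le\|\eta(\theta)-\eta(\theta')\|\le C\|\theta-\theta'\|$ for $\theta,\theta'$ in a common $U_i$. When $G\overset{W_1}{\to}G_0$ with $G\in\Ec_{k_0}(\Theta)$, after relabeling the $i$-th atom of $G$ lies in $U_i$ and the weights converge; since $\eta_{*}$ leaves the weight coordinates untouched, a term-by-term comparison in the definition of $\myD_{\m}$ — valid for every permutation, hence after minimizing over $S_{k_0}$ — yields
\[
\min(c,1)\,\myD_{\m}(G,H)\le\myD_{\m}(\eta_{*}G,\eta_{*}H)\le\max(C,1)\,\myD_{\m}(G,H),
\]
uniformly in $\m$, and likewise with $H$ replaced by $G_0$. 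Continuity of $\eta$ and $\eta^{-1}$ also gives that $G\overset{W_1}{\to}G_0$ in $\Ec_{k_0}(\Theta)$ if and only if $\eta_{*}G\overset{W_1}{\to}\eta_{*}G_0$ in $\Ec_{k_0}(\eta(\Theta))$, and $G\neq H$ translates into $\eta_{*}G\neq\eta_{*}H$ by injectivity.

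Combining these facts, Theorem~\ref{thm:expfam} applied to $\tilde f$ at $\eta_{*}G_0$ gives \eqref{eqn:genthmcon} and \eqref{eqn:curvatureprodbound} with $(\P,\myD,G_0)$ replaced by $(\tilde\P,\myD,\eta_{*}G_0)$; substituting the equality of total variations, the two-sided bound on $\myD_{\m}$, and the translation of the $W_1$-limits through $\eta_{*}$ recovers \eqref{eqn:genthmcon} and \eqref{eqn:curvatureprodbound} for $f$ at $G_0$. The only genuinely delicate point is ensuring that the local bi-Lipschitz constants for $\eta$ can be chosen uniform over an entire $W_1$-neighborhood of $G_0$ and, crucially, independent of $\m$ — which is exactly where the $C^{1}$-with-full-rank-Jacobian hypothesis (rather than mere homeomorphism) enters; everything else is routine bookkeeping.
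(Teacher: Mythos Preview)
The paper itself does not prove Theorem~\ref{cor:expfamtheta}; it states the result and defers to \cite{wei2022convergence} for the details. Your reduction to the canonical case via the pushforward $\eta_*$ is the natural and correct route, and is the standard way such a corollary is derived from Theorem~\ref{thm:expfam}: identify $\P_{G,\m}$ with $\tilde\P_{\eta_*G,\m}$, check that $\eta_*G_0$ lands in the interior of the natural parameter space (your open-mapping argument via the inverse function theorem is exactly right), invoke Theorem~\ref{thm:expfam} there, and transfer back using a bi-Lipschitz comparison of $\myD_{\m}$ that is uniform in $\m$.

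One small tightening is worth making. You phrase the bi-Lipschitz estimate as holding for $\theta,\theta'$ in a \emph{common} $U_i$, but then claim the term-by-term comparison is ``valid for every permutation'': for $\tau\neq\mathrm{id}$ the pair $(\theta_{\tau(i)},\theta'_i)$ need not lie in the same $U_i$. The cleanest fix is to observe that the homeomorphism hypothesis together with the $C^1$ full-rank Jacobian makes $\eta$ a global $C^1$ diffeomorphism from $\Theta^\circ$ onto the open set $\eta(\Theta^\circ)$, hence bi-Lipschitz on any single compact $K\subset\Theta^\circ$ containing all atoms of $G_0$. Then for $G,H$ in a small enough $W_1$-ball around $G_0$ every atom of $G$ and $H$ lies in $K$, and the comparison $c\|\theta-\theta'\|\le\|\eta(\theta)-\eta(\theta')\|\le C\|\theta-\theta'\|$ holds for all relevant pairs simultaneously, so the bound on $\myD_{\m}$ goes through for every $\tau\in S_{k_0}$ as you need.
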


The inverse bounds \eqref{eqn:genthmcon} and \eqref{eqn:curvatureprodbound} may be established for fairly general kernels that go beyond exponential family, including the case where the data space $\Xf$ is abstract; see \cite[Sections~5.2 and 5.3]{wei2022convergence} for more details. We are now in a good position to present a convergence rate result for mixture of product distribution based on Bayesian estimation procedure.

\vspace{.1in}
\noindent
\underline{Posterior contraction} 
%
	Suppose the data are $\n$ independent sequences of exchangeable observations, $X_{[\m_i]}^i = (X_{i1},X_{i2},\cdots,X_{i\m_i}) \in \Xfrak^{\m_i}$ for $i\in[\n]$. Each sequence $X_{[\m_i]}^i$ is assumed to be a sample drawn from a mixture of $\m_i-$product distributions $P_{G,\m_i}$ for some "true" de Finetti (mixing) measure $G^*=G_0 \in \Ec_{k_0}(\Theta)$. 
	The problem is to estimate $G_0$ given the $m$ independent exchangeable sequences.
	A Bayesian statistician endows $(\Ec_{k_0}(\Theta), \Bc(\Ec_{k_0}(\Theta)))$ with a prior distribution $\Pi$ and obtains the posterior distribution $\Pi(dG| X^1_{[\m_1]},\ldots, X^{\n}_{[\m_m]})$ by Bayes' rule, where $\Bc(\Ec_{k_0}(\Theta))$ is the Borel sigma algebra w.r.t. $D_1$ distance. We now study the asymptotic behavior of this posterior distribution as the amount of data $\n\times\m$ tend to infinity. 
	
	 Suppose throughout this section,  $\{P_{\theta}\}_{\theta\in \Theta}$ has density $\{f(x|\theta)\}_{\theta\in \Theta}$ w.r.t. a $\sigma$-finite dominating measure $\lambda$ on $\Xfrak$; 
	 then $P_{G,\m_i}$ for $G=\sum_{i=1}^{k_0}p_i\delta_{\theta_i}$ has density w.r.t. to $\lambda$:
	\begin{equation*}
	p_{G,\m_i}(\bar{x}) = \sum_{i=1}^{k_0} p_i\prod_{j=1}^{\m_i} f(x_j|\theta_i),  \text{for } \bar{x}=(x_1,x_2,\cdots,x_{\m_i}). 
   \end{equation*}
	Then the density of $X_{[\m_i]}^i$ conditioned on $G$ is $p_{G,\m_i}(\cdot)$.
	%
    \begin{comment}
	Since $\Theta$ as a subset of $\R^q$ is separable, $\Ec_{k_0}(\Theta)$ is separable. 
	Thus, the posterior distribution (a version of regular conditional distribution) is the random measure given by 
	\begin{equation}
	\Pi(B|X_{[\m_1]}^1,\ldots,X_{[\m_{\n}]}^{\n}) = \frac{\int_B\prod_{i=1}^\n p_{G,\m_i}(X_{[\m_i]}^i)d\Pi(G)}{\int_{\Ec_{k_0}(\Theta)}\prod_{i=1}^\n p_{G,\m_i}(X_{[\m_i]}^i)d\Pi(G)}, \label{eqn:bayesrule}
	\end{equation}
	for any Borel measurable subset of $B \subset \Ec_{k_0}(\Theta)$.	
	%
    \end{comment}
	It is customary to express the above model equivalently in the hierarchical Bayesian fashion: \begin{align*}
	&G \sim \Pi,  \quad \theta_1,\theta_2,\cdots,\theta_\n|G \overset{i.i.d.}{\sim} G\\
	&X_{i1},X_{i2},\cdots,X_{i\m_i} |\theta_i  \overset{i.i.d.}{\sim} f(x|\theta_i) \quad \text{for }i=1,\cdots,\n.
	\end{align*}
	As above, the $\n$ independent data sequences are denoted by $X_{[\m_i]}^i = (X_{i1},\cdots,X_{i\m_i}) \in \Xfrak^{\m_i }$ for $i\in[\n]$. 
The following assumptions are required for the main theorems of this section.
	
	\begin{enumerate}[label=(B\arabic*)]
	
	\item \label{item:prior} (Prior assumption)  There is a prior measure $\Pi_{\theta}$ on $\Theta_1\subset \Theta$ with its Borel sigma algebra possessing a density w.r.t. Lebesgue measure that is bounded away from zero and infinity, where $\Theta_1$ is a compact subset of $\Theta$. Define $k_0$-probability simplex $\Delta^{k_0}:=\{(p_1,\ldots,p_{k_0}) \in \R_+^{k_0} | 
	\sum_{i=1}^{k_0} = 1\}$, Suppose there is a prior measure $\Pi_p$ on the $k_0$-probability simplex possessing a density w.r.t. Lebesgue measure on $\R^{k_0-1}$ that is bounded away from zero and infinity. Then $\Pi_p\times \Pi_{\theta}^{k_0}$ is a measure on $$\{((p_1,\theta_1),\ldots,(p_{k_0},\theta_{k_0}))|p_i\geq 0, \theta_i\in \Theta_1, \sum_{i=1}^{k_0}p_i=1\},$$ which induces a probability measure on $\Ec_{k_0}(\Theta_1)$. 
	Here, the prior $\Pi$ is generated by via independent $\Pi_p$ and $\Pi_\theta$ and the support $\Theta_1$ of $\Pi_\theta$ is such that $G_0\in \Ec_{k_0}(\Theta_1)$.


	\item \label{item:kernel} (Kernel assumption) Suppose that for every $\theta_1,\theta_2\in \Theta_1$, $\theta_0\in \{\theta_i^0\}_{i=1}^{k_0}$ and some positive constants $\alpha_0,L_1, \beta_0, L_2$,
	 \begin{align}
	 K(f(x|\theta_0),f(x|\theta_2))\leq &L_1\|\theta_0-\theta_2\|_2^{\alpha_0}, \label{eqn:KLlipschitz} \\
	 h(f(x|\theta_1),f(x|\theta_2))\leq & L_2\|\theta_1-\theta_2\|_2^{\beta_0}. \label{eqn:hellingerlipschitz}
	 \end{align}
	\begin{comment}
	Suppose $K(f(x|\theta_1),f(x|\theta_2))\leq L_1\|\theta_1-\theta_2\|_2^{\alpha_0}$ 
	for some $\alpha_0>0$ and some $L_1>0$.
	Suppose $h(f(x|\theta_1),f(x|\theta_2))\leq L_2\|\theta_1-\theta_2\|_2^{\beta_0}$ 
	for some $\beta_0>0$ and some $L_2>0$. Here $\theta_1,\theta_2$ are any distinct elements in $\Theta_1$.
	\end{comment}
	\end{enumerate}

The maximum between two numbers is denoted by $a\vee b$ or $\max\{a,b\}$. 
An useful quantity is the average sequence length $\bar{\m}_{\n}=\frac{1}{\n}\sum_{i=1}^{\n}\m_i$. The posterior contraction theorem will be characterized in terms of distance $D_{\bar{\m}_{\n}}(\cdot, \cdot)$, which extends the original notion of distance $D_\m(\cdot,\cdot)$ by allowing real-valued weight $\bar{\m}_{\n}$. 

	\begin{thm}
    \label{thm:posconnotid} 
	Fix $G_0\in\Ec_{k_0}(\Theta^\circ)$. Suppose  \ref{item:prior}, 
		\ref{item:kernel} and additionally \eqref{eqn:genthmcon} hold. 
	  There exists some constant $C(G_0)>0$ such that as long as $n_0(G_0,\Gc_{k_0}(\Theta_1)) \vee n_1(G_0) \leq \min_{i} \m_i \leq \sup_{i} \m_i < \infty$, for every $\bar{M}_\n\to \infty$ there holds
	\begin{align*}
	&\Pi \biggr ( D_{\bar{\m}_{\n}}(G,G_0) \geq C(G_0)\bar{M}_\n \sqrt{ \frac{\ln(\n\bar{\m}_{\n})}{\n} }, \\
    & G\in \Ec_{k_0}(\Theta_1) \biggr |X_{[\m_1]}^1, \ldots,X_{[\m_{\n}]}^{\n} \biggr ) \to  0
	\end{align*}
	in $\bigotimes_{i=1}^{m}\P_{G_0,\m_i}$-probability as $\n\to \infty$.
	\end{thm}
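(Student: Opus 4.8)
The plan is to apply the standard machinery of posterior contraction rates, in the style of Ghosal, Ghosh and van der Vaart, adapted to the present setting in which the ``data'' consists of $\n$ independent blocks, the $i$-th block being the product density $p_{G,\m_i}$, and then to translate the resulting contraction in a suitable Hellinger-type divergence on the space of product densities into contraction in the metric $D_{\bar\m_\n}$ on $\Ec_{k_0}(\Theta_1)$ via the inverse bound \eqref{eqn:genthmcon}. First I would set up the relevant product-space divergences: for a candidate $G$ and the truth $G_0$, introduce the averaged squared Hellinger affinity $\frac{1}{\n}\sum_{i=1}^{\n} h^2(p_{G,\m_i},p_{G_0,\m_i})$ and the averaged KL quantities $\frac{1}{\n}\sum_i K(p_{G_0,\m_i},p_{G,\m_i})$ and its second-moment analogue. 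Using the tensorization identity $h^2(\otimes_{j=1}^{\m} f(\cdot|\theta),\otimes_{j=1}^{\m} f(\cdot|\theta')) = 1-(1-h^2(f(\cdot|\theta),f(\cdot|\theta')))^{\m}$ together with the kernel Lipschitz assumptions \eqref{eqn:KLlipschitz}--\eqref{eqn:hellingerlipschitz} from \ref{item:kernel}, and the uniform bound $\m_i \le \sup_i \m_i < \infty$, one controls these divergences by powers of $\|\theta-\theta'\|_2$ and hence by $D_1(G,G_0)$ up to constants depending on $G_0$ and the (finite) block lengths.

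Next I would verify the three classical conditions for the contraction rate $\epsilon_\n \asymp \sqrt{\ln(\n\bar\m_\n)/\n}$ (with a large multiplicative constant $\bar M_\n\to\infty$): (i) a prior-mass (KL-ball) lower bound, (ii) existence of sieves with controlled metric entropy, and (iii) existence of exponentially powerful tests separating $G_0$ from the complement of a $D_{\bar\m_\n}$-ball. For (i), assumption \ref{item:prior} gives a density bounded away from $0$ and $\infty$ on $\Ec_{k_0}(\Theta_1)$, so a KL-neighborhood of radius $\epsilon_\n^2$ in the averaged-KL sense contains a Euclidean ball of radius $\asymp \epsilon_\n^{2/\alpha_0'}$ (with $\alpha_0'$ absorbing $\alpha_0$ and the bounded block lengths), which receives prior mass $\gtrsim \exp(-c\,\n\epsilon_\n^2)$ as needed; here one uses the parametrization of $\Ec_{k_0}(\Theta_1)$ by $(p,\theta_1,\dots,\theta_{k_0})$. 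For (ii), since $\Theta_1$ is compact and $k_0$ is fixed, the whole space $\Ec_{k_0}(\Theta_1)$ is a compact finite-dimensional set; its covering number under $D_1$ (equivalently under the product-Hellinger metric, by the Lipschitz bounds) grows only polynomially, so the entropy condition $\log N(\epsilon_\n,\cdot) \lesssim \n\epsilon_\n^2$ holds trivially and no nontrivial sieve is required. For (iii), the existence of tests with error probabilities $\lesssim \exp(-c\,\n\epsilon_\n^2)$ follows from the Le Cam--Birg\'e testing theory applied to the averaged-Hellinger metric on the product densities $\{p_{G,\m_i}\}_i$, using the by-now standard construction of tests from likelihood-ratio statistics on Hellinger balls, combined with the boundedness of $\{\m_i\}$ so that the averaged-Hellinger metric is uniformly comparable across $i$.

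Assembling (i)--(iii) via the general posterior contraction theorem yields, in $\bigotimes_i \P_{G_0,\m_i}$-probability,
\[
\Pi\!\left(\frac{1}{\n}\sum_{i=1}^{\n} h^2(p_{G,\m_i},p_{G_0,\m_i}) \ge \bar M_\n^2\,\epsilon_\n^2,\ G\in\Ec_{k_0}(\Theta_1)\ \Big|\ X^1_{[\m_1]},\dots,X^{\n}_{[\m_m]}\right)\to 0 .
\]
The final step is to convert this into a statement about $D_{\bar\m_\n}(G,G_0)$. Here is where the inverse bound \eqref{eqn:genthmcon} enters: because $\min_i\m_i \ge n_1(G_0)$, the local lower bound $V(\P_{G,\m},\P_{G_0,\m}) \gtrsim \myD_\m(G,G_0)$ holds (uniformly in $\m$ in the admissible finite range, as \eqref{eqn:genthmcon} is a $\liminf_{\m\to\infty}$ statement whose positivity, once attained at $\m=n_1(G_0)$, persists for all larger $\m$ in the finite window $[\,n_1(G_0),\sup_i\m_i\,]$ by \cite[Section~5]{wei2022convergence}), so that $h^2(p_{G,\m_i},p_{G_0,\m_i}) \ge \tfrac12 V^2(\P_{G,\m_i},\P_{G_0,\m_i}) \gtrsim \myD_{\m_i}^2(G,G_0)/(\text{const})$ for $G$ in a fixed $W_1$-neighborhood of $G_0$; the condition $n_0(G_0,\Gc_{k_0}(\Theta_1))\le \min_i\m_i$ guarantees injectivity of $G\mapsto\P_{G,\m_i}$ so the global part of the event (outside the neighborhood) is killed by the tests. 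Since $\myD_{\m_i}^2(G,G_0) = (\sqrt{\m_i}\,\|\Delta\theta\| + |\Delta p|)^2 \gtrsim \bar\m_\n^{-1}\wedge 1$ times $\myD_{\bar\m_\n}^2(G,G_0)$ up to an absolute constant (comparing the two reweightings), averaging over $i$ and using $\frac1\n\sum_i \m_i = \bar\m_\n$ gives $\frac1\n\sum_i h^2(p_{G,\m_i},p_{G_0,\m_i}) \gtrsim c(G_0)\,\myD_{\bar\m_\n}^2(G,G_0)$ on the neighborhood. Combining with the displayed contraction and $\epsilon_\n = \sqrt{\ln(\n\bar\m_\n)/\n}$ yields $\Pi(D_{\bar\m_\n}(G,G_0)\ge C(G_0)\bar M_\n \epsilon_\n,\ G\in\Ec_{k_0}(\Theta_1)\mid \text{data})\to 0$, which is the claim.

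The main obstacle I anticipate is the uniform-in-$\m$ bookkeeping in this last conversion step: one must make precise, and uniform over the finite window $n_1(G_0)\le\m\le\sup_i\m_i$, both the lower bound $V(\P_{G,\m},\P_{G_0,\m})\gtrsim\myD_\m(G,G_0)$ coming from \eqref{eqn:genthmcon} and the matching of the $\m_i$-reweighted distances $\myD_{\m_i}$ with the averaged-reweight distance $\myD_{\bar\m_\n}$, all with constants that depend only on $G_0$ (and the block-length window) and not on $\n$. The fact that \eqref{eqn:genthmcon} is stated as a $\liminf_{\m\to\infty}$ means one should really invoke the finite-$\m$ refinement (that the relevant $\liminf$ is positive for every $\m \ge n_1(G_0)$, cf.\ \cite[Section~5.2]{wei2022convergence}), and then a compactness argument over the finite set of admissible block lengths yields a single constant. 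Everything else — prior mass, entropy, tests — is routine given compactness of $\Theta_1$, the fixed number of components $k_0$, and the Lipschitz kernel assumptions.
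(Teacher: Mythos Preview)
Your proposal follows essentially the same approach as the paper's: apply the Ghosal--Ghosh--van der Vaart posterior contraction machinery (prior mass via \ref{item:prior} and the KL Lipschitz bound \eqref{eqn:KLlipschitz}, entropy via compactness of $\Theta_1$ and the Hellinger Lipschitz bound \eqref{eqn:hellingerlipschitz}, tests via Le Cam--Birg\'e) to obtain contraction at the density level in the averaged Hellinger/variational distance, and then transfer this to $D_{\bar N_m}$ via the inverse bound \eqref{eqn:genthmcon}. The paper (being an exposition) does not spell out the proof here but points to \cite[Section~6]{wei2022convergence} and the commented-out Lemma immediately following the theorem, which records exactly this two-step strategy; you have also correctly identified the one nontrivial point, namely the uniformity of the local inverse bound over the finite window $n_1(G_0)\le N\le \sup_i N_i$ and the role of $n_0$ in handling the global part outside the $W_1$-neighborhood of $G_0$.
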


\begin{rem} 
By Theorem \ref{thm:idenmixpro}, $ n_0(G_0,\cup_{k\leq k_0}\Ec_k(\Theta))\leq 2k_0 -1$ for any $G_0\in \Ec_{k_0}(\Theta)$. While the uniform upper bound $2k_0-1$ might not be tight, it does show  $n_0(G_0,\cup_{k\leq k_0}\Ec_k(\Theta_1))<\infty$. 
$n_1(G_0) < \infty$ is a direct consequence of \eqref{eqn:genthmcon}. Hence $n_0(G_0,\cup_{k\leq k_0}\Ec_k(\Theta_1)) \vee n_1(G_0) <\infty$.
\myeoe    
\end{rem}

	
    Roughly speaking, Theorem \ref{thm:posconnotid} produces the following posterior contraction rates. The rate toward mixing probabilities $p_i^0$ is $O_P((\ln(\n\bar{\m}_\n)/\n)^{1/2})$. 
   Individual atoms $\theta_i^0$ have much faster contraction rate, which utilizes the full volume of the data set: 
   \begin{equation}
   O_P\left(\sqrt{\frac{\ln(\n\bar{\m}_\n)}{\n\bar{\m}_\n}}\right) = 
   O_P\biggr (\sqrt{\frac{\ln (\sum_{i=1}^{\n} \m_i)}{\sum_{i=1}^{\n} \m_i}} \biggr). \label{eqn:atomconvergencerate}
   \end{equation}
    The theorem establishes that the larger the average length of observed sequences, the faster the posterior contraction as $m\to \infty$. 

 Finally, the conditions \ref{item:kernel} and \eqref{eqn:genthmcon} can be verified for full rank exponential families and hence we have the following corollary from Theorem \ref{thm:posconnotid}. 

\begin{cor}  \label{cor:posconexp} 
Consider a full rank exponential family for kernel $P_\theta$ specified as in Theorem \ref{cor:expfamtheta} and assume all the requirements there are met. Fix $G_0\in \Ec_{k_0}(\Theta^\circ)$. Suppose that $\ref{item:prior}$ holds with $\Theta_1\subset \Theta^\circ$. Then  the posterior contraction rate of Theorem \ref{thm:posconnotid} holds.
\end{cor}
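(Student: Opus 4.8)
The plan is to obtain Corollary \ref{cor:posconexp} as a direct application of Theorem \ref{thm:posconnotid}: it suffices to verify that, for a full rank exponential family as in Theorem \ref{cor:expfamtheta} and with \ref{item:prior} holding for some compact $\Theta_1 \subset \Theta^\circ$ containing the atoms of $G_0$, the remaining hypotheses of Theorem \ref{thm:posconnotid} hold, namely the kernel assumption \ref{item:kernel}, the inverse bound \eqref{eqn:genthmcon}, and the finiteness $n_0(G_0,\Gc_{k_0}(\Theta_1)) \vee n_1(G_0) < \infty$ (so that the admissible range of sequence lengths in that theorem is nonempty).

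First I would dispose of the two structural requirements. The inverse bound \eqref{eqn:genthmcon} holds for every $G_0 \in \Ec_{k_0}(\Theta^\circ)$ by Theorem \ref{cor:expfamtheta}, whose hypotheses are assumed in the corollary, and this already yields $n_1(G_0) < \infty$ as noted in the remark following Theorem \ref{thm:posconnotid}. For $n_0$, I would use the fact that in a full rank exponential family the kernels $f(\cdot\,|\,\theta_1),\ldots,f(\cdot\,|\,\theta_\ell)$ at distinct parameters are linearly independent; together with Theorem \ref{thm:idenmixpro} (identifiability on $\Gc_{k_0}(\Theta_1)$ once $N \ge 2k_0 - 1$), or with the sharper Theorem \ref{thm:idenmixpro3}, this gives $n_0(G_0,\Gc_{k_0}(\Theta_1)) < \infty$, hence $n_0 \vee n_1 < \infty$.

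The substantive step is verifying \ref{item:kernel}. My plan is to pass to the canonical parametrization $\eta = \eta(\theta)$ with log-partition $A$, so that $B(\theta) = A(\eta(\theta))$; since $\Theta_1$ is compact and $\eta$ is a $C^1$ homeomorphism with full-rank Jacobian on $\Theta^\circ$, the image $\eta(\Theta_1)$ is a compact subset of the open natural parameter domain, on which $A$ is smooth and $\nabla A(\eta) = \Eb_\eta T(X)$. Then $K(f(\cdot\,|\,\theta_0),f(\cdot\,|\,\theta))$ equals the Bregman divergence of $A$ evaluated at $(\eta(\theta_0),\eta(\theta))$, a nonnegative $C^2$ function of $\theta$ that vanishes together with its first-order term at $\theta = \theta_0$, uniformly over $\theta_0 \in \Theta_1$; a second-order Taylor bound combined with the compactness estimate $\|\eta(\theta) - \eta(\theta_0)\| \lesssim \|\theta - \theta_0\|$ on $\Theta_1$ then gives \eqref{eqn:KLlipschitz} with $\alpha_0 = 2$. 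For the Hellinger bound I would use the elementary inequality $h^2 \le \tfrac12 K$, which upgrades the previous estimate to $h(f(\cdot\,|\,\theta_1),f(\cdot\,|\,\theta_2)) \lesssim \|\theta_1 - \theta_2\|_2$ for $\theta_1,\theta_2 \in \Theta_1$, i.e. \eqref{eqn:hellingerlipschitz} with $\beta_0 = 1$. Since \ref{item:prior} guarantees $G_0 \in \Ec_{k_0}(\Theta_1)$, these bounds hold precisely at the parameter configurations demanded by \ref{item:kernel}. With \ref{item:prior}, \ref{item:kernel}, \eqref{eqn:genthmcon} and the finiteness $n_0 \vee n_1 < \infty$ all established, Theorem \ref{thm:posconnotid} applies verbatim and yields the asserted posterior contraction rate.

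The main obstacle is the careful handling of \ref{item:kernel}: one must check that $\eta(\Theta_1)$ stays inside the open natural parameter domain so that $A$ and the requisite moments of $T(X)$ are finite and smooth there, and that differentiation under the integral sign is legitimate on this compact set — both are standard for exponential families but deserve explicit mention. Once $K$ is identified with a Bregman divergence of the smooth log-partition $A$, the quadratic control of $K$ and the linear control of $h$ over a compact subset of the interior are routine, and the remainder of the proof is just matching the hypotheses of Theorem \ref{thm:posconnotid}.
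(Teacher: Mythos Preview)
Your proposal is correct and matches the paper's approach: the paper explicitly frames Corollary~\ref{cor:posconexp} as an application of Theorem~\ref{thm:posconnotid} once ``the conditions \ref{item:kernel} and \eqref{eqn:genthmcon} can be verified for full rank exponential families,'' and your plan---invoking Theorem~\ref{cor:expfamtheta} for \eqref{eqn:genthmcon}, the Bregman-divergence form of $K$ plus $h^2 \le \tfrac12 K$ on the compact $\eta(\Theta_1)$ for \ref{item:kernel}, and the identifiability theorems for $n_0 \vee n_1 < \infty$---carries this out in the expected way.
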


As already discussed, inverse bounds \eqref{eqn:genthmcon} and \eqref{eqn:curvatureprodbound} may be established for fairly general kernels that beyond exponential family, so do the posterior contraction rates; see \cite[Sections~5.3 and 6]{wei2022convergence} for more details.

\subsection{Hierarchical Dirichlet process and beyond}
\label{sec:hierarchical}

In this section, we examine the roles of optimal transport in a full-fledged setting of hierarchical modeling, by illustrating with the canonical formulation of hierarchical models first described in Eq. \eqref{eqn:hierarchical} (see Section \ref{sec:deFinetti} for the motivation). This is a latent structured model that arises in the analysis of multiple samples of data populations: we are given $m$ sequences, each of which is made of $N$ exchangeable observations.

According to the model specification \eqref{eqn:hierarchical}, the latent structure that is deployed to analyze the $m$ data populations consists of $m$ corresponding mixing measures $G_1,\ldots, G_m$, as well as de Finetti's mixing measure $\Dscr$ that governs the generating process for the $G_i$'s. In a multiple sample setting, the objectives of inference may be multi-pronged: for instance, one might be interested in the inference of only one or more specific data populations among them, or a comparative analysis among such data populations. In this case the de Finetti's measure $\Dscr$ may be viewed merely as a modeling device to quantify a Bayesian uncertainty about the generating mechanism for a specific $G_i$ of interest; such uncertainty is expected to be reduced once the sample size $N$ associated with the  $i$-the sample increases, even as $m$ stays fixed \citep{CatalanoEtAlPosteriorAsymptoticsBoosted2022}. Consistent with the focus of this article, we are motivated by the second ramification of de Finetti's theorem, as discussed in Section \ref{sec:deFinetti} and investigated in \cite{nguyen2016borrowing}, particularly in the learning of the entire latent structure containing $\Dscr$ and the latent instances of $\{G_1,\ldots,G_m\}$. Similar to the results obtained for the mixtures of product distributions described in Section \ref{sec:mixproduct}, as the number of sampled sequences $m$ increases, one expects that $\Dscr$ can be learned more accurately. In addition, it is interesting to study the impact of the sequence length $N$ on the quality of the estimates of the entire latent structure. 

The most common approach to estimation for the hierarchical model \eqref{eqn:hierarchical} in the existing literature is done in a Bayesian way. 
The latent structure $\Dscr$ is endowed with a prior distribution: $\Dscr \sim \Pi$. It is important at this point to specify what $\Pi$ is. Recall that the mixing measures $G_1,\ldots, G_m \in \Pcal(\Theta)$, the space of Borel probability measures on $\Theta$. Thus, $\Dscr$ must be an element of the space of probability measures on $\Pcal(\Theta)$, namely $\Dscr \in \Pcal(\Pcal(\Theta))$. It follows that $\Pi \in \Pcal(\Pcal(\Pcal(\Theta)))$. 

In the last several sections, in order to arrive at a more precise asymptotic theory we restricted the space of $G_i$'s to discrete measures with a bounded number of supporting atoms, in which case the prior $\Pi$ can be easily described by a finite number of parameters. However, here we shall adopt the richer and more powerful modeling approach of Bayesian nonparametrics, in which no such restriction is required. A well-known modeling choice for $\Dscr$ is to set it to be a Dirichlet distribution on the space of $\Pcal(\Theta)$: $\Dscr := \textrm{DP}(\alpha,G_0)$, for some parameters $\alpha>0$ and $G_0 \in \Pcal(\Theta)$ \citep{ferguson1973bayesian}. To complete the specification of $\Pi$, in keeping with the hierarchical Bayesian modeling spirit, the unknown $G_0$ is endowed with another Dirichlet prior: $G_0 \sim \textrm{DP}(\gamma,H)$. The induced prior $\Pi$ is known as the hierarchical Dirichlet process \citep{Teh-etal-06}. Other priors have been proposed, including the nested Dirichlet process prior \cite{Rodriguez-etal-08}, as well as hierarchical constructions using other types of normalized completely random measures \citep{camerlenghi2019distribution}.
Once the prior $\Pi$ is properly set, the posterior distribution of quantities of interest, namely $\Pi(\Dscr| \{X_{[N]}^{i}\}_{i=1}^{m})$ and $\Pi(G_1, \ldots | \{X_{[N]}^{i}\}_{i=1}^{m})$, may be obtained; more precisely it is their approximate samples that can be obtained via Markov Chain Monte Carlo techniques. 

To identify and quantify the convergence behavior of latent structure $\Dscr$, one needs a suitable metric. Similar to the consideration which led to the definition of composite distance in Eq. \eqref{eqn:composite}, optimal transport provides a natural distance metric for the (Bayesian) hierarchy of distributions which may extend to an arbitrary number of hierarchical levels \citep{nguyen2016borrowing}. Allow $\Theta$ to be a complete separable metric space (i.e., $\Theta$ is a Polish space), then by Prokhorov's theorem $\Pcal(\Theta)$ is also a Polish space with respect to the topology of weak convergence. Moreover, fix $r\geq 1$, and let $\Pcal_r(\Theta)$ be restricted to Borel probability measures on $\Theta$ that has bounded $r$-th moment (when $\Theta$ is bounded, then $\Pcal_r(\Theta)= \Pcal(\Theta)$). $\Pcal_r(\Theta)$ is metrized by $W_r$ as given in Eq. \eqref{eq:optdist} and reproduced here: for $G,G' \in \Pcal_r(\Theta)$
\[W_r^r(G,G')= \inf_{\kappa \in \TPlan(G,G')} \int \|\theta-\theta'\|^r \textrm{d} \kappa(\theta,\theta').\]
Again by Prokhorov's theorem, $\Pcal_r(\Pcal_r(\Theta))$ is a Polish space, and moreover it can be metrized  by the following optimal transport distance which
is induced by metric $\Wr$ on $\Pcal_r(\Theta)$: for
$\Dscr,\Dscr' \in \Pcal_r(\Pcal_r(\Theta))$

\begin{equation}
\label{Eqn-W2-def}
\Wcalr(\Dscr,\Dscr') := \inf_{\Kcal \in \TPlan(\Dscr,\Dscr')} 
\biggr [ \int \Wr^r(G,G') \; \Kcal(\dd G,\dd G') \biggr ]^{1/r},
\end{equation}
where $\Kcal(\dd G,\dd G')$ denotes a coupling of $\Dscr$ and $\Dscr'$, namely, a joint distribution on $\Pcal_r(\Theta)\times \Pcal_r(\Theta)$ which projects to the marginal $\Dscr$ and $\Dscr'$, and $\TPlan(\Dscr,\Dscr')$ stands for the space of all such couplings. Note that the symbol $\Wr$ was reused, first for $W_r(G,G')$ and then $W_r(\Dscr,\Dscr')$ but we should not be confused as the context is clear from the arguments. Note also that due to the tightness of $\TPlan(\Dscr,\Dscr')$ (cf. Theorem 4.1 in \cite{Villani-08}), and the continuity of the cost function $W_r^r(G,G')$, the optimal coupling which defines the infimum problem in the above formulation exists.

To gain intuition about the optimal transport distances which have been defined on probability measures on $\Pcal(\Theta)$ and $\Pcal(\Pcal(\Theta))$, we shall present a useful result relating $\Wr(\Dscr, \Dscr')$ and the $W_r$ distance of the corresponding mean probability measures. We say $G$ to be the mean probability measure of $\Dscr$ and write $\int P \dd \Dscr = G$, if $\int P(A) \Dscr (\dd P) = G(A)$ for all measurable subset $A \subset \Theta$. As an example, if $\Dscr = \textrm{DP}(\alpha,G)$, a Dirichlet distribution on $\Pcal(\Theta)$ parametrized by the base probability mesure $G$ and concentration parameter $\alpha>0$, then $\int P \dd \Dscr  = G$ indeed. 
The following is from \cite{nguyen2016borrowing} (Lemma 3.1):
\begin{lem}
\label{lem:Wmeanmeasure}
(a) if $\int P \dd \Dscr = G$ and $\int P \dd \Dscr' = G'$, and $W_r(\Dscr,\Dscr')$ is finite for some $r\geq 1$, then $W_r(G,G') \leq W_r(\Dscr,\Dscr')$.\\
(b) If $\Dscr=\textrm{DP}(\alpha, G)$ and $\Dscr' = \textrm{DP}(\alpha, G')$, for some $\alpha>0$. Then $W_r(\Dscr, \Dscr') = W_r(G,G')$, if both quantities are finite.
\end{lem}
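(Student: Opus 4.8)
The plan is to prove the two inequalities $W_r(G,G')\le \Wr(\Dscr,\Dscr')$ and, under the Dirichlet hypothesis, $\Wr(\Dscr,\Dscr')\le W_r(G,G')$ separately. Part (a) is obtained by pushing an optimal coupling of $\Dscr$ and $\Dscr'$ \emph{downward} to a coupling of the mean measures $G,G'$; part (b) is obtained by lifting an optimal coupling of $G,G'$ \emph{upward} to a coupling of the two Dirichlet processes via the stick-breaking representation. Since $\int P\,\dd(\textrm{DP}(\alpha,G))=G$, part (a) applies in the setting of part (b), and combining the two directions yields the claimed equality.

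For part (a): by the existence of optimal couplings noted just before the lemma, pick $\Kcal\in\TPlan(\Dscr,\Dscr')$ attaining $\int \Wr^r(P,P')\,\Kcal(\dd P,\dd P')=\Wr^r(\Dscr,\Dscr')$. For each pair $(P,P')$ the infimum defining $\Wr^r(P,P')$ is attained (as $\Theta$ is Polish and the cost lower semicontinuous), and one may select such an optimal plan $\kappa_{P,P'}\in\TPlan(P,P')$ measurably in $(P,P')$; this measurable selection is the one genuinely technical point, and I would invoke the standard selection theorem for optimal transport plans (e.g.\ Corollary~5.22 of \cite{Villani-08}). Then set $\kappa:=\int \kappa_{P,P'}\,\Kcal(\dd P,\dd P')$, which is a Borel probability measure on $\Theta\times\Theta$ by routine arguments. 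Evaluated on a Borel $A\subset\Theta$, its first marginal is $\int \kappa_{P,P'}(A\times\Theta)\,\Kcal(\dd P,\dd P')=\int P(A)\,\Kcal(\dd P,\dd P')=\int P(A)\,\Dscr(\dd P)=G(A)$, and similarly the second marginal is $G'$, so $\kappa\in\TPlan(G,G')$. Hence, by Tonelli,
\[
W_r^r(G,G')\ \le\ \int \|\theta-\theta'\|^r\,\dd\kappa\ =\ \int\!\Big(\int \|\theta-\theta'\|^r\,\dd\kappa_{P,P'}\Big)\Kcal(\dd P,\dd P')\ =\ \int \Wr^r(P,P')\,\Kcal(\dd P,\dd P')\ =\ \Wr^r(\Dscr,\Dscr').
\]

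For part (b): let $\kappa\in\TPlan(G,G')$ be optimal, so $\int\|\theta-\theta'\|^r\,\dd\kappa=W_r^r(G,G')$. Use the stick-breaking representation of the Dirichlet process with \emph{shared} weights: let $V_1,V_2,\ldots\stackrel{\text{iid}}{\sim}\mathrm{Beta}(1,\alpha)$, set $\beta_k:=V_k\prod_{j<k}(1-V_j)$, and independently let $(\phi_k,\phi_k')_{k\ge1}\stackrel{\text{iid}}{\sim}\kappa$; put $P:=\sum_{k\ge1}\beta_k\delta_{\phi_k}$ and $P':=\sum_{k\ge1}\beta_k\delta_{\phi_k'}$. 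Then $(\phi_k)_{k\ge1}$ are i.i.d.\ $G$, independent of $(\beta_k)$, so $P\sim\textrm{DP}(\alpha,G)=\Dscr$ and likewise $P'\sim\Dscr'$; thus the joint law of $(P,P')$ is a coupling $\Kcal\in\TPlan(\Dscr,\Dscr')$. For each realization, $\sum_{k\ge1}\beta_k\delta_{(\phi_k,\phi_k')}$ is a transport plan between $P$ and $P'$ (its two marginals are $P$ and $P'$, even when atoms coincide), whence $W_r^r(P,P')\le\sum_{k\ge1}\beta_k\|\phi_k-\phi_k'\|^r$. Taking expectations, using independence of $(\beta_k)$ from $(\phi_k,\phi_k')$, Tonelli, and $\sum_k\Eb\beta_k=\Eb\sum_k\beta_k=1$,
\[
\Wr^r(\Dscr,\Dscr')\ \le\ \Eb\,W_r^r(P,P')\ \le\ \sum_{k\ge1}\Eb[\beta_k]\,\Eb\|\phi_k-\phi_k'\|^r\ =\ \Big(\sum_{k\ge1}\Eb\beta_k\Big)W_r^r(G,G')\ =\ W_r^r(G,G').
\]
Combined with part (a), applied with mean measures $G,G'$, this gives $\Wr(\Dscr,\Dscr')=W_r(G,G')$ whenever both are finite.

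The only real obstacle is the measurable selection of optimal plans in part (a) (and, relatedly, verifying that the mixture $\kappa=\int\kappa_{P,P'}\,\Kcal(\dd P,\dd P')$ is a well-defined probability measure with the asserted marginals); everything else is bookkeeping with Tonelli's theorem and the Sethuraman representation. One could sidestep the selection by a gluing argument, but invoking the standard optimal-transport selection theorem is cleaner and shorter.
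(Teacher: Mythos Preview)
The paper does not give its own proof of this lemma; it simply attributes the result to \cite{nguyen2016borrowing} (Lemma~3.1). Your argument is correct and is essentially the standard one found there: for (a) you push an optimal coupling of $(\Dscr,\Dscr')$ down to a coupling of $(G,G')$ via a measurable selection of optimal plans $\kappa_{P,P'}$, and for (b) you lift an optimal coupling of $(G,G')$ up to a coupling of the two Dirichlet processes using the Sethuraman stick-breaking representation with shared weights. The only technical subtlety you flagged---measurable selection of optimal transport plans---is handled exactly as you say, by Corollary~5.22 of \cite{Villani-08}; an alternative route that avoids selection entirely is to argue part (a) through Kantorovich duality (for admissible potentials $\phi,\psi$, $\int\phi\,\dd G+\int\psi\,\dd G'=\int\big(\int\phi\,\dd P+\int\psi\,\dd P'\big)\,\dd\Kcal\le\int W_r^r(P,P')\,\dd\Kcal$), but your primal construction is equally valid and arguably more transparent.
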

The identity in part (b) of the above lemma can be extended without difficulty to hold generally for the cases that the random measures under $\Dscr$ and $\Dscr'$ share the same jump distribution (a fact also noted in \cite{hierarchical2024}). This identity plays a crucial role in the analysis of posterior contraction for the latent structure $\Dscr$ that arises in the hierarchical Dirichlet process model (HDP). Specifically, let $P_{\Dscr,N}$ be the induced probability distribution for the $N$-sample $X_{[N]}$ for given by the hierarchical model \eqref{eqn:hierarchical}. That is,

\begin{equation}
\label{Eqn-pXG}
P_{\Dscr,N}(\dd x_1,\ldots, \dd x_N) = \int \prod_{j=1}^{N} 
 \biggr ( \int (f(\dd x_j |\theta) G(\dd \theta) \biggr ) \Dscr (\dd G).
\end{equation}

Under a suitable regularity condition on the kernel $f$, the following upper bound on the KL divergence of the $N$-sample's generating distributions can be obtained: 
\citep{nguyen2016borrowing} (Lemma 3.2):
\[K(P_{\Dscr,N},P_{\Dscr',N}) \lesssim N W_r^r(\Dscr,\Dscr').\]
Now, for the HDP model, if $\Dscr = \textrm{DP}(\alpha,G)$ and $\Dscr'=\textrm{DP}(\alpha, G')$ for any pair $G,G' \in \Pcal(\Theta)$, the identity of Lemma \ref{lem:Wmeanmeasure} (b) yields
\[K(P_{\Dscr,N},P_{\Dscr',N}) \lesssim N W_r^r(G,G').\]

As is expected by the reader at this point, the most demanding part of the theory lies in establishing a suitable inverse bound, which is in this case an upper bound of the Wasserstein distance $\Wr(G,G')$ in terms
of the variational distance $V(P_{\Dcal,N}, P_{\Dcal',N})$, which is of the form
\begin{equation}
\label{Eqn-overview-1}
\Wr^r(G,G') \lesssim V(P_{\Dcal,N}, P_{\Dcal',N}) + A_N(G,G'),
\end{equation}
where $A_N(G,G')$ is a quantity that tends to 0 as $N\rightarrow \infty$.
The rate at which $A_N(G,G')$ tends zero depends only on additional structures on the base measure $G$. As a consequence of this inverse bound, one can establish that the posterior distribution of the latent structure $G$ (and hence $\Dscr= \textrm{DP}(\alpha, G)$, contracts to the truth, say, $G_0$ (and respectively, $\Dscr_0 = \textrm{DP}(\alpha, G_0))$ as the number of sampled sequences $m$, as well as the sequence length $N$ increase. For full details, see \cite{nguyen2016borrowing}.

\section{Concluding remarks}

This article is an exposition on some recent theoretical advances in learning latent structured models, with a primary focus on fundamental and natural roles that optimal transport distances play in such a statistical theory. We aimed for what is in our view the most critical and novel ingredient in this theory: the motivation, formulation, derivation and ramification of inverse bounds, a rich collection of structural inequalities for latent structure models which connect the space of distributions of unobserved structures of interest to the space of distributions for observed data. We illustrate this theory by focusing on the classical mixture models, as well as the more modern hierarchical models that have been developed in Bayesian statistics, machine learning and related fields. Such selected domains of illustration were chosen due in large part to our bias and interest; however, they already cover a vast territory of applications and for which there have been a sufficiently rich collection of developments that have emerged in the past couple of decades. 

There are several open areas and recent developments that we wish to highlight.
First, it would be of interest to extend the strong identifiability theory such as those described in Section \ref{sec:finite} and \ref{sec:identifiability} to the types of mixture models for more complex data types, such as networks, graphs and more complex forms of probability kernels \citep{chakraborty2024learning,chakraborty_thesis,zhang2025bayesian}. We believe that the methodology described in Section \ref{sec:MMD1} provides a promising general direction.
Second, the minimax theory for weakly identifiable settings, as discussed in Section \ref{sec:weakidentifiability}, remains open. It is also of interest to develop novel estimation that accounts for the singularity structures in a direct way. The approach of \cite{wang2024estimating} provides one possible direction. Alternatively, one may place the problem of estimating latent structure (e.g.., mixing measures) as part of a more ambitious inference task \cite{do2024dendrogram}, which results in a pointwise optimal estimation method for the mixing measure as a byproduct.
Third, as discussed in Section \ref{sec:hierarchical}, the theory for hierarchical models remains underdeveloped, especially in attaining a more precise understanding of the roles of data set's dimensions on efficiency of estimating de Finetti's measures of interest, especially in a infinite-dimensional setting. A recent progress in this question occurs in the analysis for Latent Dirichlet allocation, which utilizes a precise correspondence between that class of hierarchical models with a class of mixture product distributions, so that the advances on the latter class (such as those described in Section \ref{sec:mixproduct}) may be brought to bear on the former \citep{do2025dirichlet}. 
Last, but not least, although we discussed how optimal transport based distances may be useful for deriving estimation procedures with favorable theoretical properties, we did not consider computational aspects of such procedures. This is an important, interesting and fast-developing research area. Examples include the adoption of hierarchical optimal transport distances described in Section \ref{sec:hierarchical}, which helps to overcome the computational burden of posterior inference with hierarchical models \citep{ho2017multilevel}, the computational properties of a variety of related notions of optimal transport \citep{hierarchical2024}, as well as the utilization of slice-Wasserstein distances in \cite{doss2020optimal} in learning finite mixtures of Gaussians. It is likely that these computational ideas can be developed into richer classes of models such as the ones discussed in this article.

\vspace{.1in}
\noindent \textbf{Acknowledgement.} The initial draft of this article arises from a short course on "Optimal transport based theory for latent structured models" given by Long Nguyen at Bocconi University in Italy in Spring 2023. We thank Antonio Lijoi and Igor Pruenster for many valuable discussions on this topic, and for the hospitality given by them, other faculty members and students at Bocconi.

\bibliographystyle{abbrv}
\bibliography{library,addRefs}


\end{document}